\theoremstyle{plain}
\newtheorem{Thm}{Theorem}[section]
\newtheorem{Prop}[Thm]{Proposition}
\newtheorem{Cor}[Thm]{Corollary}
\newtheorem{Lem}[Thm]{Lemma}
\newtheorem{Conj}[Thm]{Conjecture}
\theoremstyle{definition}
\newtheorem{Def}[Thm]{Definition}
\newtheorem{Exa}[Thm]{Example}
\newtheorem{Rmk}[Thm]{Remark}
\def\a{\alpha}
\def\b{\beta}
\def\g{\gamma}
\def\s{\sigma}
\def\r{\rho}
\def\D{\Delta}
\def\G{\Gamma}
\def\S{\Sigma}
\def\m{\mu}
\def\R{\mathbb R}
\def\Z{\mathbb Z}
\def\cA{\mathcal{A}}
\def\cD{\mathcal{D}}
\def\cE{\mathcal{E}}
\numberwithin{equation}{section}
\begin{document}

\newcommand{\arXivNumber}{2006.00627}

\renewcommand{\thefootnote}{}

\renewcommand{\PaperNumber}{010}

\FirstPageHeading

\ShortArticleName{$C$-Vectors and Non-Self-Crossing Curves for Acyclic Quivers of Finite Type}

\ArticleName{$\boldsymbol{C}$-Vectors and Non-Self-Crossing Curves\\ for Acyclic Quivers of Finite Type\footnote{This paper is a~contribution to the Special Issue on Cluster Algebras. The full collection is available at \href{https://www.emis.de/journals/SIGMA/cluster-algebras.html}{https://www.emis.de/journals/SIGMA/cluster-algebras.html}}}

\Author{Su Ji HONG}

\AuthorNameForHeading{S.J.~Hong}

\Address{Department of Mathematics, University of Nebraska-Lincoln, USA}
\Email{\href{mailto:sujihong@huskers.unl.edu}{sujihong@huskers.unl.edu}}

\ArticleDates{Received June 01, 2020, in final form January 17, 2021; Published online February 01, 2021}

\Abstract{Let $Q$ be an acyclic quiver and $k$ be an algebraically closed field. The indecomposable exceptional modules of the path algebra $kQ$ have been widely studied. The real Schur roots of the root system associated to $Q$ are the dimension vectors of the indecomposable exceptional modules. It has been shown in~[N\'ajera~Ch\'avez~A., \textit{Int.\ Math.\ Res.\ Not.} \textbf{2015} (2015), 1590--1600] that for acyclic quivers, the set of positive $c$-vectors and the set of real Schur roots coincide. To give a diagrammatic description of $c$-vectors, K-H.~Lee and K.~Lee conjectured that for acyclic quivers, the set of $c$-vectors and the set of roots corresponding to non-self-crossing admissible curves are equivalent as sets~[\textit{Exp.\ Math.}, {t}o appear,
arXiv:1703.09113]. In [\textit{Adv. Math.} \textbf{340} (2018), 855--882], A.~Felikson and P.~Tumarkin proved this conjecture for 2-complete quivers. In this paper, we prove a~revised version of Lee-Lee conjecture for acyclic quivers of type $A$, $D$, and $E_{6}$ and $E_7$.}

\Keywords{real Schur roots; $c$-vectors; acyclic quivers; non-self-crossing curves}

\Classification{13F60; 16G20}

\renewcommand{\thefootnote}{\arabic{footnote}}
\setcounter{footnote}{0}

\section{Introduction}\label{intro}

\looseness=-1 Let $Q$ be an acyclic quiver and $k$ be an algebraically closed field. The path algebra $kQ$ is an algebra generated by all oriented paths of $Q$. The category of $kQ$ modules is equivalent to the category of quiver representations. Kac has shown that the dimension vectors of indecomposable quiver representations are the positive roots of the root system corresponding to the quiver~$Q$~\cite{Kac_inflie}. One of the objects of interest in relation to~$kQ$ is the set of indecomposable exceptional representations of~$kQ$. The dimension vectors of these representations are called real Schur roots and the set of real Schur roots is a subset of positive roots. There are many different ways to describe real Schur roots. In~\cite{Schofield}, Schofield showed a way to classify real Schur roots using subrepresentations and in~\cite{IgusaSchf}, Igusa and Schiffler used the Coxeter element to classify real Schur roots. In \cite{HubKrause}, Hubery and Krause used non-crossing partitions to determine real Schur roots. Another way to describe a real Schur root comes from the cluster algebra associated to~$Q$. A~cluster algebra of~$Q$ is an algebra with generators, called cluster variables, obtained from the quiver and exchange relations. Fomin and Zelevinksy showed that any cluster variables can be expressed as Laurent polynomials in the initial cluster variables~\cite{FominZel1}. When a~cluster variable is at its most reduced form, the exponent vector of the denominator monomial gives the denominator vector, $d$-vector, of that cluster variable. Caldero and Keller proved that for an acyclic quiver, the set of real Schur roots and the set of $d$-vectors of non-initial cluster variables are equivalent~\cite{CalKel}.

Another description of a real Schur root comes from the framed quiver of $Q$. In the framed quiver of $Q$, $c$-vectors describe the relations between the mutable vertices and the frozen vertices (more precise definition is given in Section~\ref{background}). One of the big questions about $c$-vectors was the sign coherence of $c$-vectors. In~\cite{DerkWeyZel}, Derksen, Weyman, and Zelevinsky proved the sign coherence of $c$-vectors. Thus the set of $c$-vectors is the disjoint union of the set of positive $c$-vectors and the set of negative $c$-vectors. Chavez showed that for an acyclic quiver, the set of positive $c$-vectors and the set of real Schur roots coincide~\cite{Chavez15}.

Although there are many ways to describe a real Schur root, none of the descriptions mentioned so far are diagrammatic. In cluster algebras, the diagrammatic descriptions of clusters are useful. In~\cite{FominShapThurs}, Fomin, Shapiro, and Thurston described clusters using tagged triangulations and arcs. Also, Nakanishi and Stella gave diagrammatic descriptions of $c$-vectors and $d$-vectors of quivers of finite type~\cite{NakStel}. To give more general geometric description of $c$-vectors of acyclic quivers, hence real Schur roots, K.-H.~Lee and K.~Lee formulated the following conjecture about the set of real Schur roots and a set of certain paths on a Riemann surface called admissible curves (defined in Section~\ref{subsection_admcurv}).

\begin{Conj}[Lee--Lee \cite{LeeLee}]\label{conjll}
For an acyclic quiver, the set of roots associated to non-self-crossing admissible curves and the set of real Schur roots coincide.
\end{Conj}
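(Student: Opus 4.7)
The plan is to establish both containments of the set equality, using Chavez's theorem that the set of positive $c$-vectors coincides with the set of real Schur roots for acyclic quivers as a bridge. This reduces the problem to showing that the set of roots represented by non-self-crossing admissible curves coincides with the set of positive $c$-vectors. The finiteness of the root system in types $A$, $D$, $E_6$, and $E_7$ makes both a case-by-case verification and explicit constructions feasible, and the existing result of Felikson and Tumarkin for 2-complete quivers suggests that inductive or quiver-mutation techniques should be effective here as well.

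First I would tackle the direction that every real Schur root arises from a non-self-crossing admissible curve. The approach is constructive, by induction on the height of the positive root. Simple roots correspond to short arcs supported in a single region of the surface model and are trivially non-self-crossing. For higher roots, I would mirror the mutation structure of the cluster algebra by a local geometric operation on curves that preserves the non-self-crossing property, so that producing a new $c$-vector by mutation translates into a controlled homotopy of curves. An alternative route is to exploit the Igusa--Schiffler description of real Schur roots via a Coxeter element and to build curves adapted to a fixed Coxeter word, reducing the verification of non-self-crossing to a combinatorial check on the chosen word.

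For the reverse direction, one analyzes a non-self-crossing admissible curve by reading off its sequence of intersections with a fixed reference collection of arcs, thereby extracting a positive integer vector. To see that such a vector is a real Schur root, one must show that the corresponding representation is exceptional. The geometric non-self-crossing hypothesis should force algebraic rigidity: any non-trivial self-extension or endomorphism of the putative module should manifest as a self-crossing of the curve. Bridging these two viewpoints through the Hubery--Krause non-crossing partition description of real Schur roots is a natural intermediary, since non-crossing partitions interpolate between the combinatorial topology of curves and the algebraic structure of exceptional sequences.

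The main obstacle is expected to be the reverse direction, where a global topological condition must be converted into the algebraic rigidity condition defining an exceptional module. For type $D$ and especially $E_6$ and $E_7$, positive roots with large coefficients correspond to curves that can wrap around the surface in subtle ways, and the translation between self-crossings and algebraic obstructions becomes delicate. This is likely why only a revised form of the conjecture can be proved here: one probably has to modify the class of admissible curves, or adjust how a root is associated to a curve, so that the bijection becomes exact. The bulk of the work will then consist of a careful, type-by-type topological-combinatorial analysis, with the exceptional types $E_6$ and $E_7$ demanding the most intricate curve constructions.
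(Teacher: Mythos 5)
Your proposal is aimed at a much harder target than the one the paper actually hits, and in the process it misses the short argument that settles this statement in finite type. The paper's proof of Conjecture~\ref{conjll} (for the finite‐type quivers treated here) is Corollary~\ref{cor_of_prop_bourbaki}, which is a few lines long. The reverse inclusion is immediate: by definition the associated root $\alpha_\pi(\gamma)$ of any non-self-crossing admissible curve is a positive root, and for type $ADE$ every positive root is a real Schur root, so no rigidity or exceptionality of modules needs to be verified at all. Your plan to show that ``any non-trivial self-extension of the putative module manifests as a self-crossing of the curve,'' possibly via Hubery--Krause, is machinery for the general acyclic case (this is essentially the content of Felikson--Tumarkin's work) and is unnecessary here; moreover, as stated it is an assertion, not an argument. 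The forward inclusion is where your Coxeter-element instinct is in the right spirit, but the paper does not need induction on height or any mutation-of-curves operation: it quotes Bourbaki's Proposition~\ref{prop_Bourbaki}, that the orbits $\Omega_i=\{c_\pi^k\theta_i\}$ of the elements $\theta_i=s_{\pi(n)}\cdots s_{\pi(i+1)}\alpha_{\pi(i)}$ under the Coxeter transformation exhaust the entire root system, and observes that $\theta_i$ is the associated root of an explicit curve while the operation $c$ (wrapping a curve once around all marked points) multiplies the associated root by $c_\pi$ up to sign. Hence every positive root is $\alpha_\pi(c^k\gamma)$ for some $k$ and some explicit curve $\gamma$, with no case analysis.

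Two further concrete gaps. First, your proposed ``local geometric operation on curves that mirrors mutation and preserves the non-self-crossing property'' is exactly the kind of step that cannot be waved at: controlling self-intersections under such operations is delicate, and the paper deliberately avoids it for this statement. Second, you have conflated Conjecture~\ref{conjll} with the revised Conjecture~\ref{conjllnondec}: the type-by-type analysis, the difficulties with large-coefficient roots in $E_6$ and $E_7$, and the restriction to types $A$, $D$, $E_6$, $E_7$ all pertain to producing \emph{non-decreasing} curves (Theorem~\ref{thm_main}), not to the existence of some non-self-crossing curve per root, which is what the present statement asks and which holds uniformly for all finite types by the Bourbaki argument.
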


In~\cite{FelTum}, Felikson and Tumarkin gave an alternative but equivalent definition of non-self-crossing admissible curves. Conjecture~\ref{conjll} follows from \cite[Proposition~33]{Bourbaki} for the acyclic quivers of finite type (see Corollary~\ref{cor_of_prop_bourbaki}). Given a real Schur root, there are multiple non-self-crossing admissible curves that correspond to the root. To categorify these curves, K. Lee defined positive, non-decreasing, and strictly increasing curves which will be defined in Section~\ref{background}. These descriptions of curves led to the following conjecture.

\begin{Conj}[Lee \cite{LeePersonalConversation}]\label{conjllnondec}
For an acyclic quiver, the set of associated roots of non-decreasing non-self-crossing admissible curves and the set of real Schur roots coincide.
\end{Conj}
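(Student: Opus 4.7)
The plan is to prove Conjecture~\ref{conjllnondec} by establishing the two inclusions of the claimed set equality separately. One direction is essentially free: every non-decreasing non-self-crossing admissible curve is in particular a non-self-crossing admissible curve, so in finite type its associated root is a real Schur root by Conjecture~\ref{conjll} / Corollary~\ref{cor_of_prop_bourbaki}. All of the real work is concentrated in the reverse inclusion, namely producing, for each real Schur root of an acyclic quiver of type $A$, $D$, $E_{6}$, or $E_{7}$, a non-decreasing non-self-crossing admissible curve whose associated root is that root.

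For the reverse inclusion I would proceed type by type. By Chavez's theorem, every real Schur root arises as a positive $c$-vector produced by some mutation sequence of the framed quiver, and the strategy is to translate such a mutation sequence into a concrete path on the Riemann surface used to define admissible curves, then verify admissibility, the non-self-crossing condition, and the non-decreasing condition. In type $A$ the representation theory is controlled by arcs in a disk with marked points, so the candidate curve is essentially the arc assigned to the indecomposable representation with dimension vector equal to the given root; verifying that it is non-decreasing reduces to checking the order in which this arc crosses a fixed reference triangulation. In types $D$, $E_{6}$, $E_{7}$ the set of positive roots is finite (with $n(n-1)$, $36$, and $63$ positive roots respectively), so after using the symmetry of the Coxeter element and reflection functors (Igusa--Schiffler) to reduce to a list of orbit representatives, a finite, essentially computational case analysis suffices; for each representative I would construct an explicit admissible curve and verify the three conditions by inspection.

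The main obstacle is the non-decreasing verification outside type $A$. Unlike in the disk model, the surfaces used for $D$ and $E_{6}$, $E_{7}$ do not carry a canonical family of arcs realizing all real Schur roots, so each candidate curve has to be constructed by hand for its orbit representative and then propagated through the Coxeter orbit while preserving the non-decreasing property; a curve obtained naively from a shortest mutation sequence need not be non-decreasing, and one may be forced to isotope the curve within its homotopy class to extract a non-decreasing representative, whose existence is not a priori clear. I expect this obstacle to be most severe in $E_{6}$ and $E_{7}$, where the absence of a clean combinatorial surface model makes the check least uniform, and I anticipate that handling it is precisely what forces the ``revised'' formulation of the conjecture promised in the abstract: the target set of roots from non-decreasing admissible curves may need a mild adjustment (for instance, a slightly relaxed or otherwise modified non-decreasing condition) so that the explicit orbit-by-orbit construction above actually covers every real Schur root in the exceptional types.
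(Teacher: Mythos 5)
Your skeleton is right in outline---the easy inclusion via Corollary~\ref{cor_of_prop_bourbaki}, a type-by-type attack on the hard inclusion organized around Coxeter orbits, and a finite computational residue in the exceptional types---but the proposal stops exactly at the point where the paper's central idea is needed, and your fallback is wrong. The missing idea is Lemma~\ref{lem_cox_transf}: if $\g \in \G_{\pi,nd}$ has associated root $\a$ and $\a <_D c_\pi\a$, then the curve $c\g$ obtained by wrapping $\g$ once around all marked points lies in $\G_{\pi,nd}$ and has associated root $c_\pi\a$ (and symmetrically for $c_\pi^{-1}$). This is what makes ``propagation through the Coxeter orbit while preserving the non-decreasing property'' actually work, and it works precisely because each simple reflection occurs once in $c_\pi$, so the coefficientwise inequality $\a <_D c_\pi\a$ forces every intermediate partial product to be non-decreasing. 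Combined with the reduction to full subquivers (Lemma~\ref{lem_subquiver} and Remark~\ref{rmk_subquiver}) and a criterion for when $c_\pi\a <_D \a$ or $c_\pi^{-1}\a <_D \a$ (Lemma~\ref{lem_reducing_root}), this yields a downward induction on the height of the root rather than an orbit-by-orbit construction of representatives. You acknowledge that the existence of a non-decreasing representative in a homotopy class ``is not a priori clear,'' but you do not supply the mechanism that produces one.

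Two further concrete problems. First, your type $A$ construction via arcs in a triangulated disk is a different model from the one the paper uses: the non-decreasing condition is a statement about the \emph{order} in which the curve crosses the rays $\r_i$ in the upper half-plane, and the paper verifies it by exhibiting an explicit braid word $\S_{m-1}\cdots\S_\ell\,\g_{\pi^{-1}(\ell)}$ whose crossing sequence is $\r_m\r_{m-1}\cdots\r_\ell$ (Proposition~\ref{prop_a_strict_inc}); matching a triangulation arc to such a crossing sequence is an additional translation you have not carried out. Second, your anticipated ``revision'' of the conjecture---relaxing the non-decreasing condition in the exceptional types---is not what happens. The paper proves the non-decreasing statement as stated for $A$, $D$, $E_6$, $E_7$; the genuinely hard $E_7$ roots are those for which neither $c_\pi\a$ nor $c_\pi^{-1}\a$ is $\leq_D$-comparable to $\a$ for any admissible $\pi$, and these are handled not by weakening the condition but by explicit computer-found non-decreasing curves (Tables~\ref{E_7_table_1233211_1}--\ref{E_7_table_12232111}). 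The ``revised'' formulation refers to strengthening the original Lee--Lee conjecture from arbitrary non-self-crossing curves to non-decreasing ones, with the union taken over orientation-compatible permutations $\pi \in P_Q$, not to any relaxation.
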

More precise version of this conjecture is given in Conjecture \ref{conjmain}. We note that Conjecture~\ref{conjllnondec} implies Conjecture~\ref{conjll}. Felikson and Tumerkin's result~\cite{FelTum} implies Conjecture~\ref{conjllnondec} for acyclic 2-complete quivers, i.e., quivers with at least two arrows between every pair of vertices. In this paper we prove Conjecture \ref{conjllnondec} for the quivers type $A$, $D$, $E_6$ and $E_7$.

\begin{Thm}[revised Lee--Lee conjecture for type $A$, $D$, $E_6$ and $E_7$] \label{thm_in_intro} Given a quiver of type $A$, $D$, $E_6$ or $E_7$, the set of roots associated to non-decreasing non-self-crossing admissible curves is the same as the set of real Schur roots. In particular, if an acyclic quiver is of type $A$, then the set of roots associated to strictly increasing non-self-crossing admissible curves is the same as the set of real Schur roots.
\end{Thm}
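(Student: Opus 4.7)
\medskip

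\noindent\textbf{Proof proposal.} One containment is essentially free: since a non-decreasing non-self-crossing admissible curve is in particular a non-self-crossing admissible curve, Corollary~\ref{cor_of_prop_bourbaki} (which packages the finite-type case of Conjecture~\ref{conjll} via \cite{Bourbaki}) already tells us that the associated root of any such curve is a real Schur root. So the entire content of the theorem is the reverse containment: for every real Schur root $\b$ of a quiver $Q$ of type $A$, $D$, $E_6$, or $E_7$, exhibit a non-decreasing admissible curve~$\g_\b$ (strictly increasing in type~$A$) whose associated root is~$\b$, and verify that $\g_\b$ has no self-crossing.

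My plan is to work type-by-type, exploiting the fact that finite type gives only finitely many positive roots to account for. First I would fix a bijective translation between real Schur roots and combinatorial data one can draw directly on the Riemann surface used to define admissible curves: for instance, in type $A_n$ the positive roots $\a_i+\a_{i+1}+\cdots+\a_j$ correspond to arcs whose endpoints are determined by the indices $i\le j$, and in type $D_n$ and $E_n$ one can use the explicit lists of positive roots together with the labelling of the vertices coming from the chosen acyclic orientation. In each case, given~$\b$, I would build $\g_\b$ by concatenating the elementary segments dictated by the support of~$\b$, traversing them in an order compatible with the orientation of~$Q$ (ascending vertex labels along the curve), which is exactly what non-decreasing means in the sense of Section~\ref{background}. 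For type $A$, the linear shape of the quiver lets one arrange the segments in strictly ascending order, yielding the strictly increasing upgrade in the second statement of the theorem.

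The heart of the argument, and the step I expect to be the main obstacle, is checking that the constructed curve is non-self-crossing. For type~$A$ this follows from the planarity of the underlying triangulated polygon and the fact that a strictly increasing traversal cannot revisit a triangle. For type $D$ the presence of the branching vertex forces two families of roots (those supported on the ``tail'' versus those crossing the fork), and one has to check that the natural curves for the two families, and for roots having support on both sides of the fork, can be chosen so that no self-crossing arises; the admissible-curve reformulation of Felikson--Tumarkin~\cite{FelTum} is the right tool here, since it translates ``non-self-crossing'' into a local condition near each vertex. For $E_6$ and $E_7$ the strategy is the same but the verification becomes finite and explicit: one lists the $36$ (resp.\ $63$) positive roots, describes a canonical admissible curve for each, and checks the non-self-crossing condition by inspection, possibly with the aid of the symmetry of the Dynkin diagram to cut down the number of cases. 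Completing this finite check, rather than any single conceptual step, is where the real labor of the proof lies.

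Finally, to obtain the strictly increasing statement in type~$A$, I would observe that the construction above for type~$A$ is already strictly increasing, so no separate argument is required beyond noting that along a type-$A$ quiver any non-decreasing curve that visits each vertex at most once is automatically strictly increasing.
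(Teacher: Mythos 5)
Your first paragraph is fine: the containment $\{\a_\pi(\g)\}\subseteq\{\text{real Schur roots}\}$ is immediate in finite type (the paper gets it from Remark~\ref{rmk_associatedd_roots_subset_of_real_schur_roots}), and the whole content is the reverse inclusion. But from there the proposal has a genuine gap: it never actually produces the curves or verifies the two conditions that matter, and the mechanisms you invoke for doing so are not the right ones. ``Non-decreasing'' is not the condition that the curve visits vertices in ascending label order; it is the condition that the partial products $s_{j_i}\cdots s_{j_1}\a_a$ increase in the dominance order $\leq_D$ as the curve crosses successive rays, and the curves live on a disc with $n$ marked points and a basepoint, not on a triangulated polygon, so ``planarity of the triangulated polygon'' does not give non-self-crossing. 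You also do not engage with the union over permutations $\pi\in P_Q$, which is essential: the paper exhibits a type~$D$ quiver and a root that is \emph{not} realized by any strictly increasing curve for any $\pi\in P_Q$, so the ``natural curve in ascending order'' heuristic already fails beyond type~$A$, and in types $D$, $E$ one must pass between several permutations (e.g., $\pi$ and $(1\,2)\pi$ in the $E_6$ argument).

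What the proposal is missing structurally is the paper's inductive engine, which is what makes the finite check tractable rather than a raw enumeration of $36$ or $63$ roots. The paper combines (i) the full-subquiver reduction (Lemma~\ref{lem_subquiver} and Remark~\ref{rmk_subquiver}), which handles every root with a zero coordinate, with (ii) the Coxeter descent lemma (Lemma~\ref{lem_cox_transf}): if $\a$ is realized by a non-decreasing curve and $\a<_Dc_\pi\a$, then wrapping that curve once around all marked points realizes $c_\pi\a$ by a non-decreasing curve. Together with Lemma~\ref{lem_reducing_root} (which locates when $c_\pi\a<_D\a$ or $c_\pi^{-1}\a<_D\a$), this gives an induction on height that disposes of all of type~$D$ and $E_6$ and most of $E_7$. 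Only in type~$A$ does the paper give the kind of direct construction you describe (the braid-word curves $\S_{m-1}\cdots\S_\ell\g_{\pi^{-1}(\ell)}$ of Proposition~\ref{prop_a_strict_inc}, which are strictly increasing). Crucially, in type $E_7$ there are roots and orientations for which neither $c_\pi\a$ nor $c_\pi^{-1}\a$ is comparable to $\a$ for any $\pi\in P_Q$; there the paper has to exhibit explicit computer-found non-decreasing curves (Tables~\ref{E_7_table_1233211_1}--\ref{E_7_table_12232111}). That is exactly the step you defer to ``checking by inspection,'' and it is where the real difficulty of the theorem sits; without either the descent machinery or the explicit curves, the proof is not complete.
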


We introduce the roots and admissible curves in Section~\ref{background} and introduce Coxeter transformation and state Theorem~\ref{thm_in_intro} more precisely in Section~\ref{coxeter}. In Sections~\ref{type_a}--\ref{type_e}, we describe the non-decreasing non-self-crossing admissible curves for quivers of type $A$, $D$ and $E$ respectively using Coxeter transformation and other methods. Appendix \ref{affine_a} gives a supporting evidence to Conjecture~\ref{conjllnondec} by showing that it holds true for affine quiver of type $A$ with single source and single sink.

\section{Background}\label{background}
In this section, we define some of the objects of interest including the admissible curve.
\subsection[c-vectors and roots system]{$\boldsymbol{c}$-vectors and roots system}

A \emph{quiver} $Q$ is a finite oriented graph without any oriented 2-cycles and loops. We denote $Q = (Q_0,Q_1)$ where $Q_0 = [n]=\{1,\ldots, n\}$ is the set of vertices of $Q$ and $Q_1$ is the set of oriented edges, which we call arrows, of $Q$. Let $h,t\colon Q_1\to Q_0$ be maps given by $h(i\to j) = j$ and $t(i\to j) = i$. The \emph{exchange matrix} of $Q$ is $B =(b_{ij})_{1\leq i,j \leq n}$ where $b_{ij}$ is the number of arrows from the vertex $i$ to the vertex $j$ (if there are $r$ arrows from the vertex $j$ to the vertex $i$, then $b_{ij}=-r$). We \emph{mutate} a quiver $Q$ at a vertex $i$ to obtain $\mu_i(Q)$ by the steps below:
\begin{enumerate}\itemsep=0pt
\item[1)] for every path $j\to i\to k$, create an arrow $j\to k$,
\item[2)] reverse the orientations of all the arrows incident to $i$, 	
\item[3)] delete any 2-cycles.
\end{enumerate}

\begin{Def}Let $Q=(Q_0,Q_1)$ be a quiver and $\hat{Q}_0$ be a duplicate of $Q_0$. Then the \emph{framed quiver} of $Q$ is $\hat{Q}= \big(Q_0 \cup \hat{Q}_0,\, Q_1 \cup \{i\to i'\,|\,\text{for all } i \in [n]\}\big)$ where mutation at a vertex $i'$ is not allowed for all $i' \in \hat{Q}_0$.
\end{Def}

\begin{Def}Given a quiver $Q$ and a sequence of mutations $w = \m_{i_k} \cdots \m_{i_1}$, consider $w\big(\hat{Q}\big)$. A \emph{$c$-vector} of $Q$ is given by
\[\left[ \begin{matrix}
c_{i,1'}\\
c_{i,2'}\\
\vdots\\
c_{i,n'}
\end{matrix}\right], \qquad \text{where}\qquad
c_{i,j'} = \begin{cases}
\hphantom{-}r & \text{if } i \xrightarrow{r \text{ arrows}} j', \\
-r & \text{if } i \xleftarrow{r \text{ arrows}} j'.
\end{cases}\]
\end{Def}
\begin{Exa}
Consider a framed quiver $\hat{Q}$ and $\mu_1\mu_2\mu_3\big(\hat{Q}\big)$ below.
\begin{center}

\begin{tikzpicture}
\node[shape=circle,draw=black, minimum size = .4cm, inner sep = 4pt, outer sep = 0.5pt,](1) at (0,0) {1};
\node[shape=circle,draw=black, minimum size = .4cm, inner sep = 4pt, outer sep = 0.5pt,](2) at (2,0) {2};
\node[shape=circle,draw=black, minimum size = .4cm, inner sep = 4pt, outer sep = 0.5pt,](3) at (4,0) {3};
\node[shape=circle,draw=black, minimum size = .4cm, inner sep = 2.7pt, outer sep = 0.5pt,](4) at (0,-1) {$1'$};
\node[shape=circle,draw=black, minimum size = .4cm, inner sep = 2.7pt, outer sep = 0.5pt,](5) at (2,-1) {$2'$};
\node[shape=circle,draw=black, minimum size = .4cm, inner sep = 2.7pt, outer sep = 0.5pt,](6) at (4,-1) {$3'$};
\path[->](1) edge (2) ;
\path[->](2) edge (3);
\path[->](1) edge (4);
\path[->](2) edge (5);
\path[->](3) edge (6);

\draw[-Implies,line width=1pt,double distance=1pt] (4.8,-.5) -- (6.5,-.5);
\node at (5.7,-.7){$\mu_1\mu_2\mu_3$};
\node[shape=circle,draw=black, minimum size = .4cm, inner sep = 4pt, outer sep = 0.5pt,](1) at (7,0) {1};
\node[shape=circle,draw=black, minimum size = .4cm, inner sep = 4pt, outer sep = 0.5pt,](2) at (9,0) {2};
\node[shape=circle,draw=black, minimum size = .4cm, inner sep = 4pt, outer sep = 0.5pt,](3) at (11,0) {3};
\node[shape=circle,draw=black, minimum size = .4cm, inner sep = 2.7pt, outer sep = 0.5pt,](4) at (7,-1) {$1'$};
\node[shape=circle,draw=black, minimum size = .4cm, inner sep = 2.7pt, outer sep = 0.5pt,](5) at (9,-1) {$2'$};
\node[shape=circle,draw=black, minimum size = .4cm, inner sep = 2.7pt, outer sep = 0.5pt,](6) at (11,-1) {$3'$};
\path[->](1) edge (2) ;
\path[<-](1) edge (4);
\path[<-](1) edge (5);
\path[<-](1) edge (6);
\path[->](2) edge (3);
\path[->](2) edge (4);
\path[->](3) edge (5);
\end{tikzpicture}
\end{center}

The $c$-vectors are
\[ \left[ \begin{matrix}
-1\\-1\\-1
\end{matrix}\right], \qquad \left[ \begin{matrix}
1\\0\\0
\end{matrix}\right],\qquad \left[ \begin{matrix}
0\\1\\0
\end{matrix}\right].\]
\end{Exa}

As mentioned before, these $c$-vectors are sign coherent, meaning that all the entries of a~$c$-vector are either non-negative or non-positive.

For the rest of this paper, let $Q$ be an acyclic quiver and $B=(b_{ij})_{1\leq i,j \leq n}$ be the exchange matrix of~$Q$. Then the \emph{generalized Cartan matrix} $C(B)$ is $(a_{ij})_{1\leq i,j \leq n}$ where $a_{ii} =2$, $a_{ij}= -|b_{ij}|$ for $i\neq j$.

Let $\D_Q$ be the root system of the Kac-Moody algebra associated to $C(B)$. Let $\a_1,\ldots, \a_n$ be the simple roots associated to the vertices $1,2,\ldots,n$ respectively. Any root can be viewed as $\Z$-linear combination of the simple roots, i.e., for any root $\a$, there exist $\b_1,\ldots \b_n \in \Z$ such that $\a=\sum_{i=1}^n \b_i\a_i$. Given a full subquiver~$Q'$, the root system of~$Q'$ is a subroot system of~$\D_Q$. All the roots are either positive or negative. Thus $\D_Q = \D_Q^+ \cup \D_Q^-$ where $\D_Q^+$ is the set of all positive roots and~$\D_Q^-$ is the set of all negative roots. Of these roots, a~real Schur root is a~positive root that corresponds to the dimension vector of an indecomposable exceptional module of~$kQ$. Chavez showed that the set of $c$-vectors coincides with the set of real Schur roots and their opposites in the root system in~\cite{Chavez15}. Since real Schur roots are positive, the set of positive $c$-vectors coincides with the set of real Schur roots.

Given a quiver $Q$ and the exchange matrix $B$, let $\a$ be a root in $\D_Q$. We can represent $\a$ with the underlying graph of the quiver by labeling the vertex~$i$ with $\b_i$ for $i\in [n]$. For example, consider a quiver whose underlying graph is
\begin{center}
\begin{tikzpicture}[scale =.6]		
\node[shape=circle,draw=black, minimum size = .4cm, inner sep = 4pt, outer sep = 0.5pt](2) at (-4,0) {$2$};
\node[shape=circle,draw=black, minimum size = .4cm, inner sep = 4pt, outer sep = 0.5pt](3) at (-2,0) {$3$};
\node[shape=circle,draw=black, minimum size = .4cm, inner sep = 4pt, outer sep = 0.5pt](4) at (0,0) {$4$};
\node[shape=circle,draw=black, minimum size = .4cm, inner sep = 4pt, outer sep = 0.5pt](5) at (2,0) {$5$};
\node[shape=circle,draw=black, minimum size = .4cm, inner sep = 4pt, outer sep = 0.5pt](8) at (4,0) {$8$};
\node[shape=circle,draw=black, minimum size = .4cm, inner sep = 4pt, outer sep = 0.5pt](6) at (6,0) {$6$};
\node[shape=circle,draw=black, minimum size = .4cm, inner sep = 4pt, outer sep = 0.5pt](1) at (8,0) {$1$};
\node[shape=circle,draw=black, minimum size = .4cm, inner sep = 4pt, outer sep = 0.5pt](7) at (4,-2) {$7$};
\path[-](1) edge (6);
\path[-](2) edge (3);
\path[-](3) edge (4);
\path[-](4) edge (5);
\path[-](5) edge (8);
\path[-](7) edge (8);
\path[-] (8) edge (6);
\end{tikzpicture}
\end{center}

Let $\a =\a_1 + \a_2+2\a_3+3\a_4+4\a_5+3\a_6+2\a_7+5\a_8$ be a root in $\D_Q$. Then we can also represent $\a$ as
\begin{center}
\begin{tikzpicture}
\node at (-1,0){1};
\node at (-.5,0){2};
\node(1) at (0,0) {3};
\node(2) at (.5,0) {$4$};
\node(3) at (1,0) {5};
\node(4) at (1.5,0) {$3$};
\node(5) at (2,0) {$1$};
\node(6) at (1,-.5) {$2$};
\end{tikzpicture}
\end{center}

\begin{Def}We define the standard partial order on $\Delta_Q^+$ as follows. Let $\a =\sum_{i=1}^n \b_i\a_i$ and $\a' =\sum_{i=1}^n \b'_i\a_i$ be roots in $\D_Q^+$. Then \emph{$\a \leq_D \a'$} if and only if $\b_i \leq \b'_i$ for all $i \in [n]$ and \emph{$\a<_D\a'$} if $\a\leq_D\a'$ and $\b_i<\b_i'$ for some $i\in[n]$.
\end{Def}

We can define reflections for any root in $\D_Q$; in particular, a simple reflection $s_i$ is a reflection in a simple root $\a_i$ and is defined by
\[s_i (\a_j) = \a_j - a_{ij}\a_i, \qquad \text{for all} \quad i,j \in [n].\]

\subsection{Admissible curves}\label{subsection_admcurv}
K.-H.~Lee and K.~Lee defined the admissible curves on some space in~\cite{LeeLee}. In~\cite{FelTum}, A.~Felikson and P.~Tumarkin gave an equivalent definition of the admissible curves on a disc with $n$ marked interior points and a boundary point called a basepoint. We use this definition of admissible curves in this paper.

\begin{Def}Let $H = \R \times \R_{\geq 0}$ and for all $i\in [n]$, $p_i = (i,1) $ be a marked point and $\r_i = \{(i,y)\,|\,y\geq 1\}$ be a~ray in~$H$. Let $b = (0,0)$ be the base point and $P = \{p_i\,|\,i\in [n]\}$ be the set of all marked points. An \emph{admissible curve} is a smooth continuous function $\g\colon [0,1]\to H$ such that
\begin{enumerate}\itemsep=0pt
\item[(a)] $\g(0) \in P$ and $\g(1) = b$,
\item[(b)] if $\g(x) \in \{b\}\cup P$, then $x\in \{0,1\}$, and
\item[(c)] if $\g((0,1]) \cap \r_i \neq \varnothing$, then $\g$ and $\r_i$ intersect transversely.
\end{enumerate}
An admissible curve is \emph{non-self-crossing} if for all $x, y \in [0,1]$ such that $\g(x) = \g(y)$, $x=y$. See Fig.~\ref{exam_adm_cur} for an example of a non-self-crossing admissible curve.
\end{Def}

\begin{figure}[h]\centering\vspace*{-10mm}
\begin{subfigure}[b]{.45\textwidth}\centering
	\includegraphics[trim = 30mm 230mm 60mm 10mm, clip, width = 0.8\linewidth ]{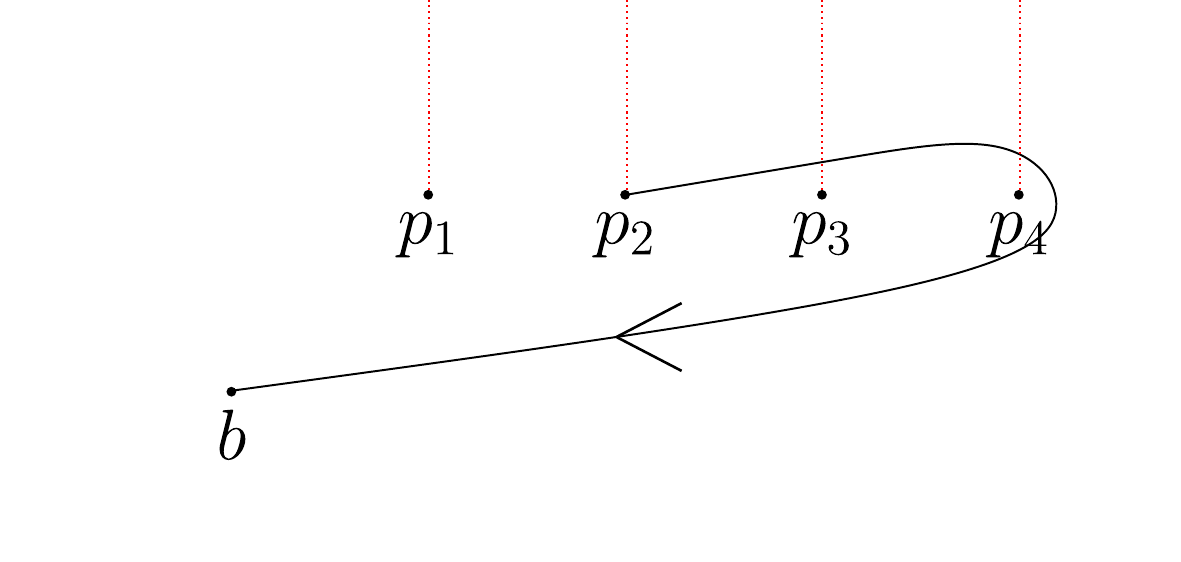}
	\caption{Example of an admissible curve}
 \label{exam_adm_cur}
\end{subfigure}%
\begin{subfigure}[b]{.45\textwidth}
 \centering
	\includegraphics[trim = 30mm 165mm 60mm 60mm, clip, width = 0.8\linewidth ]{ex_of_adm_curv.pdf}
\caption{$\g_3$ when $n=4$} \label{gamma_3}
\end{subfigure}
\caption{}\label{fig:test}
\end{figure}

Let $\g_i$ be a curve defined as $\g_i(x) = (i(1-x),1-x),$ for all $i\in[n]$. For example, the curve in Fig.~\ref{gamma_3} is $\g_3$ when $n=4$. We can easily see that $\g_i$ is an admissible curve, as{\samepage
\begin{enumerate}\itemsep=0pt
\item[(a)] $\g_i(0) = p_i\in P$ and $\g_i(1) = b$,
\item[(b)] if $\g_i(x) \in \{b\} \cup P$, then $\g_i(x) = (i(1-x),1-x) = (0,0)$ or $(k,1)$ for some $k\in [n]$, which means that $x = 0$ or $x=1$,
\item[(c)] $\g_i((0,1]) \cap \r_j = \varnothing$ for all $j$.
\end{enumerate}
Thus $\g_i$ is an admissible curve and it does not cross itself.}

In this paper, we express these curves using the description of the braid group as a mapping class group of $H$ as described in \cite[Chapter~9]{FarbMargalitMapping}. See Fig.~\ref{fig_braid_action} for an example of the braid group action on $H$.

\begin{figure}[h]\centering
\begin{subfigure}[b]{.45\textwidth}
 \centering
	\includegraphics[trim = 0mm 233mm 115mm 10mm, clip, width = 0.8\linewidth ]{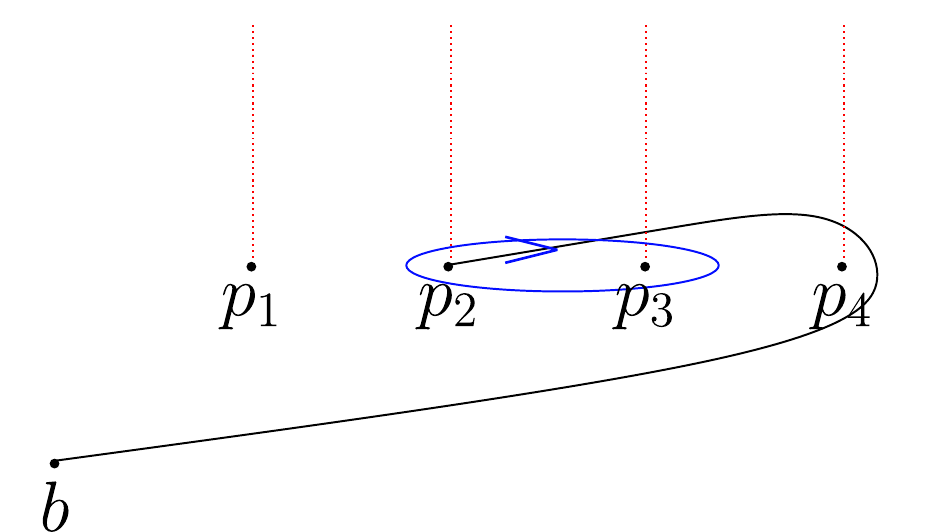}
	\caption{$\g$ and $\s_2$} \label{exam_adm_cur_2_1}
\end{subfigure}%
\begin{subfigure}[b]{.45\textwidth}
 \centering
 	\includegraphics[trim = 0mm 165mm 115mm 70mm, clip, width = 0.8\linewidth ]{ex_of_adm_curv_2.pdf}
\caption{$\s_2(\g)$} \label{exam_adm_cur_2_2-1}
\end{subfigure}
\begin{subfigure}[b]{.45\textwidth}
 \centering
	\includegraphics[trim = 100mm 233mm 20mm 10mm, clip, width = 0.8\linewidth ]{ex_of_adm_curv_2.pdf}
	\caption{$\g$ and $\s_2^{-1}$}
 \label{exam_adm_cur_2_3}
\end{subfigure}%
\begin{subfigure}[b]{.45\textwidth}
 \centering
	\includegraphics[trim = 100mm 165mm 20mm 70mm, clip, width = 0.8\linewidth ]{ex_of_adm_curv_2.pdf}
\caption{$\s_2^{-1}(\g)$}
 \label{exam_adm_cur_2_2-2}
\end{subfigure}
\caption{}\label{fig_braid_action}
\end{figure}

For $i,j \in [n]$ such that $i<j$, define $\s_{[i,j]} = \s_{j-1}\cdots\s_{i+1}\s_{i}$ and $\s_{[j,i]} = \s_i\s_{i+1}\cdots \s_{j-1}$. Then $\s_{[i,j]}^{-1} = \s_{i}^{-1}\s_{i+1}^{-1}\cdots \s_{j-1}^{-1}$ and $\s_{[j,i]}^{-1} =\s_{j-1}^{-1}\cdots \s_{i+1}^{-1}\s_i^{-1}$. Consider $\g_i$ and $\s_{[i,j]}\g_i$ in Fig.~\ref{seq_sig}. Then $\s_{[i,j]}\g_i(0) = p_j$ and as $x$ increases, $\s_{[i,j]}\g_i(x)$ crosses $\r_{j-1}$, $\r_{j-2}$, \dots, $\r_i$ in that order.

\begin{figure}[t]\centering\vspace*{-5mm}
\begin{subfigure}[b]{.3\textwidth}
 \centering
	\includegraphics[trim = 15mm 250mm 110mm 10mm, clip, width = \linewidth ]{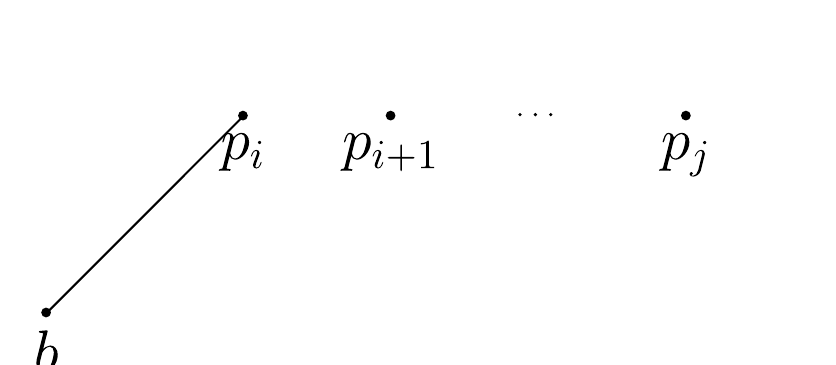}
	\caption{$\g_i$}
 \label{S_2}
\end{subfigure}%
\begin{subfigure}[b]{.3\textwidth}
 \centering
\includegraphics[trim =100mm 250mm 20mm 10mm, clip, width = \linewidth ]{ex_S_i.pdf}
\caption{$\s_{[i,j]}(\g_i)$}
 \label{S_2g_5-1}
\end{subfigure}
\begin{subfigure}[b]{.3\textwidth}
 \centering
\includegraphics[trim =100mm 200mm 20mm 50mm, clip, width = \linewidth ]{ex_S_i.pdf}
\caption{$\s_{[j,i]}\s_{[i,j]}(\g_i)$}
 \label{S_2g_5-2}
\end{subfigure}
\caption{}
\label{seq_sig}
\end{figure}

\begin{Def}
Let $\pi$ be a permutation in $S_n$ and $\g$ be a non-self-crossing admissible curve. Assume $\g(x)$ crosses the rays $\r_{k_1},\r_{k_2}, \ldots, \r_{k_m}$ as $x$ increases. Let $I_\g$ be the word $\r_{k_1}\r_{k_2}\cdots\r_{k_m}$ over $\{\r_1,\ldots,\r_n\}$. Then the \emph{associated root} of $\g$ with respect to $\pi$ is
\begin{gather*}\a_\pi(\g)=\begin{cases} s_{\pi(k_{m})}s_{\pi(k_{m-1})}\ldots s_{\pi(k_2)}s_{\pi(k_1)}\a_{\pi(k_0)}, \\
\qquad \text{if } s_{\pi(k_{m})}s_{\pi(k_{m-1})}\ldots s_{\pi(k_2)}s_{\pi(k_1)}\a_{\pi(k_0)} \text{ is positive},\\
-s_{\pi(k_{m})}s_{\pi(k_{m-1})}\ldots s_{\pi(k_2)}s_{\pi(k_1)}\a_{\pi(k_0)}, \\
\qquad \text{if } s_{\pi(k_{m})}s_{\pi(k_{m-1})}\ldots s_{\pi(k_2)}s_{\pi(k_1)}\a_{\pi(k_0)} \text{ is negative}, \end{cases}
\end{gather*}
where $\a_i$ is a simple root associated to the vertex $i$ and $s_i$ is a simple reflection for the simple root $\a_i$. Note that $\a_\pi(\g)$ is always positive.

As $s_is_i = id$, if two curves are isotopic, then they have the same associated root.
\end{Def}

\begin{Def}
Given a permutation $\pi$ in $S_n$ and a non-self-crossing admissible curve $\g$, assume $I_\g= k_0k_1\cdots k_m$. Then let $\pi(I_\g)$ be the word $\r_{\pi(k_0)}\r_{\pi(k_1)}\cdots \r_{\pi(k_m)}$ over $\{\r_1,\ldots, \r_n\}$. Let $a, j_1,\ldots, j_m \in \{1,\ldots, n\}$ so that $\pi(k_0)=a$ and $\pi(k_i) = j_i$ for $1\leq i\leq m$. Then $\g$ is said to be
\begin{itemize}\itemsep=0pt
\item \emph{positive} with respect to $\pi$ if $s_{j_i}\cdots s_{j_1}\a_a$ are positive for all $i\in[m]$,
\item \emph{non-decreasing} with respect to $\pi$ if $s_{j_i}\cdots s_{j_1}\a_a \geq_D s_{j_{i-1}}\cdots s_{j_1}\a_a$, for all $i\in [m]$, and
\item \emph{strictly increasing} with respect to $\pi$ if $s_{j_i}\cdots s_{j_1}\a_a>_D s_{j_{i-1}}\cdots s_{j_1}\a_a$, for all $i\in [m]$.
\end{itemize}
If $\pi$ is clear from the context, we just say positive, non-decreasing, and strictly increasing curves. Let $\G_{\pi,p}$ be the set of positive non-self-crossing curves with respect to $\pi$, $\G_{\pi,nd}$ be the set of non-decreasing non-self-crossing curves with respect to $\pi$, and $\G_{\pi,s}$ be the set of strictly increasing curves with respect to $\pi$.
\end{Def}

\begin{Rmk}\label{rmk_diff_curves}Recall that $\G$ is the set of isotopy classes of non-self-crossing admissible curves. Let $\pi$ be a permutation in $S_n$ and~$\g$ be a non-self-crossing admissible curve that is non-decreasing with respect to $\pi$. Let $\pi(I_\g) = \r_a\r_{j_1}\cdots \r_{j_m}$. If $\g$ is strictly increasing, then $s_{j_i}\cdots s_{j_1}\a_a \geq_D s_{j_{i-1}}\cdots s_{j_1}\a_a$ holds true for all $i$ by definition of strictly increasing non-self-crossing curve. Also, $s_{j_i}\cdots s_{j_1}\a_a \geq_D \a_a$ for all $i$. Since $\a_a$ is positive, $s_{j_i}\cdots s_{j_1}\a_a$ is positive for all $i$. Thus $\g$ is positive and we can see that $\G_{\pi,s} \subset \G_{\pi, nd} \subset \G_{\pi,p} \subset \G$.
\end{Rmk}

\begin{Conj}[Lee \cite{LeePersonalConversation}]\label{conjmain}
Let $Q$ be an acyclic quiver with $n$ vertices and $\G_{\pi,nd}$ be the set of non-decreasing non-self-crossing admissible curve. Let $P_Q$ be the set of permutations $\pi$ such that if $a >b$, then there is no arrow from $\pi(a)$ to $\pi(b)$. Then \[\bigcup_{\pi \in P_Q}\{ \a_\pi(\g)|\g\in \G_{\pi,nd}\} = \{\text{real Schur roots of } Q \}. \]
\end{Conj}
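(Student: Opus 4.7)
The plan is to prove both inclusions, restricting to the finite simply laced types $A$, $D$, $E_6$, $E_7$ as in Theorem \ref{thm_in_intro}.

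For the easy inclusion $\bigcup_{\pi\in P_Q}\{\a_\pi(\g):\g\in\G_{\pi,nd}\}\subseteq\{\text{real Schur roots}\}$, Remark \ref{rmk_diff_curves} gives $\G_{\pi,nd}\subseteq\G_{\pi,p}\subseteq\G$, so every non-decreasing curve is an admissible non-self-crossing curve whose intermediate vectors $s_{j_i}\cdots s_{j_1}\a_a$ are all positive. Since Conjecture \ref{conjll} holds in finite type via Corollary \ref{cor_of_prop_bourbaki}, the terminal vector is a real Schur root, and positivity eliminates the sign ambiguity in the definition of $\a_\pi(\g)$. The compatibility condition on $\pi\in P_Q$ (no arrow from $\pi(a)$ to $\pi(b)$ when $a>b$) ensures the Coxeter element $c_\pi = s_{\pi(n)}\cdots s_{\pi(1)}$ is adapted to the orientation of $Q$, which is what makes the positivity argument go through.

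For the harder reverse inclusion, I fix a $\pi\in P_Q$ (such orderings exist because $Q$ is acyclic) and aim to realize each real Schur root of $Q$ as $\a_\pi(\g)$ for some $\g\in\G_{\pi,nd}$. The organizing principle is the classical fact that in finite type each real Schur root has the form $s_{\pi(j_r)}\cdots s_{\pi(j_1)}\a_{\pi(j_0)}$ for some prefix of a $c_\pi$-word, indexed by the $\langle c_\pi\rangle$-orbit structure on roots. Working type-by-type, I construct explicit curves matching this word. In type $A_n$, real Schur roots are consecutive sums $\a_i+\cdots+\a_j$, and one builds a curve starting at $p_j$ and crossing the rays $\r_k$ in the order dictated by $\pi$ restricted to $\{i,\ldots,j-1\}$; each step adjoins exactly one simple root, so the growth is strictly monotone under $\leq_D$ and we obtain the stronger strictly increasing statement of Theorem \ref{thm_in_intro}. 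In types $D_n$, $E_6$, $E_7$, the root system is finite (sizes $n(n-1)$, $72$, $126$), and I would organize the roots by $\langle c_\pi\rangle$-orbits and supply a template curve for each orbit, using the braid-group action of $\s_{[i,j]}$ on the reference curves $\g_i$ to absorb reflections one at a time.

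The main obstacle is the verification of the non-decreasing condition $s_{j_i}\cdots s_{j_1}\a_a\geq_D s_{j_{i-1}}\cdots s_{j_1}\a_a$ at each ray crossing, particularly for types $D$ and $E$. This inequality fails under a generic ordering of reflections; the compatibility $\pi\in P_Q$ together with the geometric order in which the curve meets the rays must cooperate to force the coefficient of $\a_{j_i}$ to grow (or at worst remain fixed). Near the trivalent branch vertex in types $D_n$, $E_6$, $E_7$ the inequality typically degenerates to equality rather than strict inequality, which is precisely why the strictly increasing statement cannot be upgraded from type $A$ to all types. I expect the cleanest proof to proceed by induction on the length of the Coxeter word realizing a given real Schur root, with base case the simple roots (realized by the straight curves $\g_i$) and inductive step produced by the braid action; for $E_6$ and $E_7$ the inductive step reduces to a finite check on orbit representatives.
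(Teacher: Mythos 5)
Your outline matches the paper's overall strategy --- the easy inclusion from positivity of $\a_\pi(\g)$, explicit strictly increasing curves in type $A$, and an induction driven by the Coxeter transformation for types $D$ and $E$ --- but for the hard inclusion outside type $A$ it remains a statement of intent, and the one concrete organizing principle you propose would not work as stated. You suggest indexing real Schur roots by $\langle c_\pi\rangle$-orbits and supplying a template curve per orbit via the braid action. The paper explicitly observes (in the example following Proposition \ref{prop_a_strict_inc}) that the curves it needs are \emph{not} of the form $c^k\g$ applied to the reference curves $\theta_i$ from Proposition \ref{prop_Bourbaki}: the orbit $\{c_\pi^k\theta_i\}$ sweeps out all roots, but consecutive members of an orbit are generally incomparable under $\leq_D$, so wrapping a curve around all marked points does not preserve the non-decreasing property along an orbit. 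The induction that actually works is on height (or on $|Q_0|$), not on orbits, and its engine is a lemma you do not isolate: if $\g\in\G_{\pi,nd}$ realizes $\a$ and $\a<_D c_\pi\a$ \emph{coefficientwise}, then $c\g\in\G_{\pi,nd}$ realizes $c_\pi\a$ (Lemma \ref{lem_cox_transf}); the proof uses that each simple reflection occurs exactly once in $c_\pi$, so the intermediate roots interpolate between $\a$ and $c_\pi\a$. Feeding this induction requires a criterion for when $c_\pi\a<_D\a$ or $c_\pi^{-1}\a<_D\a$ (Lemma \ref{lem_reducing_root}: a unique raising index with an adjacent lowering index), together with the subquiver reduction (Lemma \ref{lem_subquiver} and Remark \ref{rmk_subquiver}) to handle roots with zero coefficients and the leaf-extension Lemma \ref{lem_1_at_leaves} in type $E$.

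The second, more serious gap is that this induction genuinely breaks down in type $E_7$: for the roots $\begin{smallmatrix}1&2&3&3&2&1\\&&&1&&\end{smallmatrix}$ and $\begin{smallmatrix}1&2&2&3&2&1\\&&&1&&\end{smallmatrix}$ the unique raising index is not adjacent to any lowering index, and for certain orientations neither $c_\pi\a$ nor $c_\pi^{-1}\a$ is comparable to $\a$ for any $\pi\in P_Q$. The paper has to exhibit explicit non-decreasing curves for these cases (found by computer; Tables \ref{E_7_table_1233211_1}--\ref{E_7_table_12232111}), and this obstruction is also why the paper stops at $E_7$ rather than covering $E_8$. Your ``finite check on orbit representatives'' does not anticipate that the check can fail to reduce to smaller roots at all. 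Finally, note that your claim to fix a single $\pi\in P_Q$ and realize every root for that $\pi$ is stronger than the conjecture (which takes a union over $\pi$); the paper proves the per-$\pi$ statement only for types $A$ and $D$, while for $E_6$ and $E_7$ it must pass between several permutations related by transpositions of the leaf vertices.
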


\begin{Rmk} \label{rmk_associatedd_roots_subset_of_real_schur_roots}
For the acyclic quivers of type $ADE$, the set of real Schur roots coincide with the set of positive roots. By definition of the associated root of a non-self-crossing admissible curve, it is clear that $\a_\pi(\g)$ is a positive root, hence real Schur root, for all $\g \in \G$. Thus, for an acyclic quiver $Q$ of type ADE, \[\bigcup_{\pi \in P_Q}\{ \a_\pi(\g)|\g\in \G\} \subseteq \{\text{real Schur roots of } Q \}. \]
\end{Rmk}

\section{Coxeter transformation}\label{coxeter}
From now on we consider finite types of ADE. In this section we consider a Coxeter transformation of the Weyl group $W$ of a root system and develop some useful tools to prove Conjecture~\ref{conjmain}. The Weyl group $W$ of a root system is a Coxeter group with generators, $s_1,\ldots, s_n$, and relations on them. Let $c_\pi= s_{\pi(1)}s_{\pi(2)}\cdots s_{\pi(n)}$ where $\pi$ is a permutation of $[n]$. The element $c_\pi \in W$ is called a Coxeter transformation. As $W$ is finite, the order of~$c_\pi$ is finite. Let $h$ be the order of $c_\pi$. This $h$ does not depend on $\pi$, i.e., for any $\tau$ in $S_n$, $h$ is the order of~$c_\tau$. Let $\theta_i = s_{\pi(n)}s_{\pi(n-1)}\cdots s_{\pi(i+1)}\a_{\pi(i)}$. We can recognize $\theta_i$ as the associated root of a non-self-crossing admissible curve. Let $\g$ be a non-self-crossing curve as below:
\begin{center}
	\includegraphics[trim = 30mm 220mm 30mm 25mm, clip, width = .5 \linewidth ]{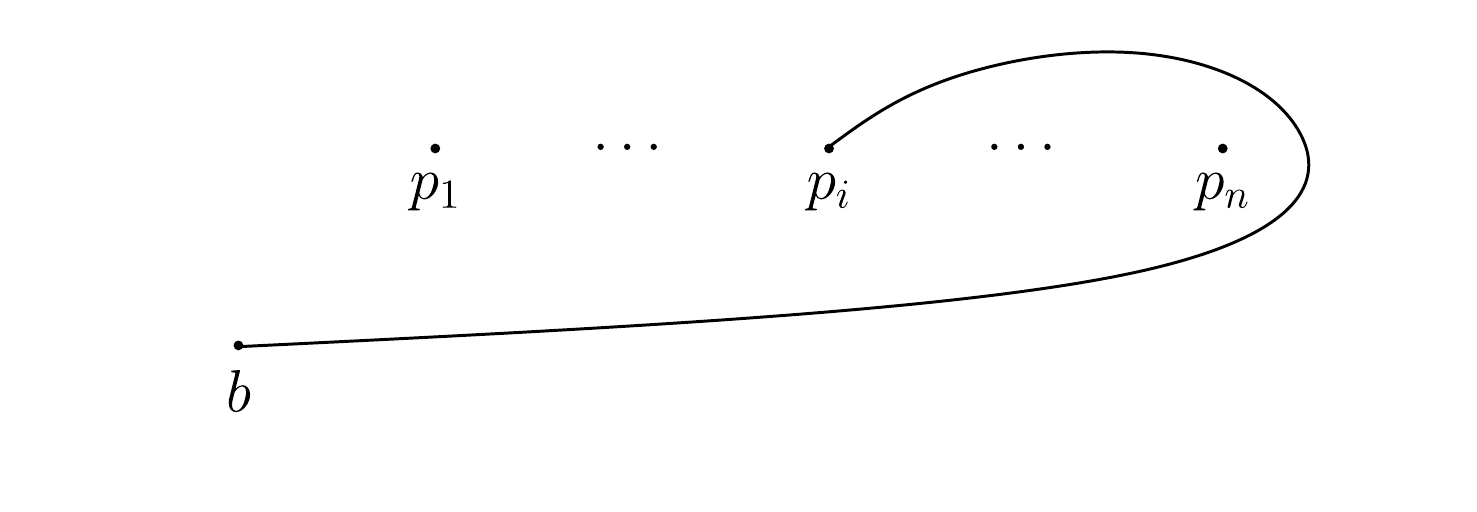}
\end{center}

As $I_\g=r_i\cdots r_n$, $\a_\pi(\g) = \theta_i$. Let $c\colon\Gamma \to \Gamma$ be defined by sending $\g \in \G$ to a non-self-crossing admissible curve obtained from $\g$ by wrapping around all marked points counter-clockwise at the end. For example, $\g_i$ and $c(\g_i)$ are given below.
\begin{center}\vspace*{-7mm}
	\includegraphics[trim = 30mm 160mm 30mm 70mm, clip, width = .5 \linewidth ]{theta_i_and_cox_trans.pdf}
\end{center}

Note that $I(c(\g))$ is given by concatenating $I(\g)$ and $\r_n\r_{n-1}\cdots \r_1$; thus $\a_\pi(c(\g)) = s_{\pi(1)}s_{\pi(2)}\allowbreak \cdots s_{\pi(n)} \a_\pi(\g)$ up to a sign where $\pi$ is any permutation in $S_n$. Let $\big\{c_\pi^k \theta_i\,|\,0\leq k <h\big\}_+$ be the intersection of $\big\{c_\pi^k \theta_i\,|\,0\leq k <h\big\}$ and the set of positive roots of~$Q$. Note that $\big\{c_\pi^k \theta_i\,|\,0\leq k <h\big\}_+ = \big\{\a_\pi(c^k\g)\,|\,0\leq k <h\big\}$. Son Nguyen informed the author of the following observation.

\begin{Prop}[Bourbaki \cite{Bourbaki}] \label{prop_Bourbaki}
Let $\Delta$ be a root system and $W$ be its Weyl group. Let $\pi$ be a~permutation of $[n]$, $\theta_i = s_{\pi(n)}s_{\pi(n-1)}\cdots s_{\pi(i+1)}\a_i$, and $\Omega_i = \big\{c_\pi^k\theta_i \,|\,k=0,\ldots, h-1\big\}$. Then
\begin{enumerate}\itemsep=0pt
 \item[$1)$] $\Omega_i \cap \Omega_j = \varnothing$ for all $i \neq j$, and
 \item[$2)$] $\Delta = \bigcup_{i=1}^n \Omega_i$.
\end{enumerate}
\end{Prop}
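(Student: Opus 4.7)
The plan is to identify the $\theta_i$ with the inversions of $c_\pi$ and then count $\langle c_\pi\rangle$-orbits in $\Delta$. Since $\pi$ is a permutation of $[n]$, no simple reflection is repeated, so $s_{\pi(1)}s_{\pi(2)}\cdots s_{\pi(n)}$ is a reduced expression for $c_\pi$. By the standard enumeration of the inversion set of a reduced word, the $n$ roots
\[
\theta_i = s_{\pi(n)}s_{\pi(n-1)}\cdots s_{\pi(i+1)}\alpha_{\pi(i)}, \qquad i=1,\ldots,n,
\]
are exactly the positive roots $\alpha$ satisfying $c_\pi\alpha<0$; in particular the $\theta_i$ are pairwise distinct positive roots.

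Next, I would show that the cyclic group $\langle c_\pi\rangle$ acts freely on $\Delta$, so that each orbit has exactly $h$ elements. This freeness follows from the fact that Coxeter elements in ADE types are regular in Springer's sense, and it can alternatively be verified type by type. Combined with the classical identity $|\Delta|=nh$ in ADE, it forces the total number of $\langle c_\pi\rangle$-orbits in $\Delta$ to be exactly $n$.

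I would then rule out any orbit lying entirely inside $\Delta^+$ or entirely inside $\Delta^-$. If $\mathcal{O}\subset\Delta^+$ were such an orbit, then $\sum_{\alpha\in\mathcal{O}}\alpha$ would be a nonzero vector fixed by $c_\pi$, contradicting the fact that the eigenvalues of $c_\pi$ on the reflection representation are $e^{2\pi i m_j/h}$ with $1\le m_j\le h-1$, so $c_\pi$ has no nonzero fixed vector. The same argument handles $\Delta^-$. Hence every orbit meets both $\Delta^+$ and $\Delta^-$, and therefore contains at least one positive root $\theta$ with $c_\pi\theta<0$, i.e., at least one of the $\theta_i$.

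Since there are $n$ orbits, at least one of the $\theta_i$ per orbit, and exactly $n$ such $\theta_i$ in total, a pigeonhole argument forces each orbit to contain precisely one $\theta_i$. Thus $\Omega_1,\ldots,\Omega_n$ are the $n$ distinct $\langle c_\pi\rangle$-orbits, which gives (1), and together they exhaust $\Delta$, giving (2). The step I expect to be most delicate is the freeness of the $\langle c_\pi\rangle$-action on $\Delta$: a self-contained justification either invokes Springer's theorem on regular elements or proceeds by a type-by-type analysis, since the absence of the eigenvalue $1$ for $c_\pi$ alone does not prevent proper powers $c_\pi^k$ from fixing individual roots.
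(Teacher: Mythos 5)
Your argument is correct, but note that the paper itself offers no proof of this statement: it is quoted from Bourbaki (Proposition~33 of Chapters 4--6), so there is nothing internal to compare against. What you have written is essentially the classical Bourbaki/Kostant argument: identify $\theta_1,\dots,\theta_n$ with the inversion set $N(c_\pi)=\{\alpha\in\Delta^+ : c_\pi\alpha\in\Delta^-\}$ of the reduced word $s_{\pi(1)}\cdots s_{\pi(n)}$, use $|\Delta|=nh$ to count orbits, use the absence of the eigenvalue $1$ to show no orbit stays inside $\Delta^+$ or $\Delta^-$, and conclude by pigeonhole. (Minor point: the proposition as printed has $\theta_i=s_{\pi(n)}\cdots s_{\pi(i+1)}\a_i$, but the body of Section~3 and the inversion-set reading both require $\a_{\pi(i)}$, which is what you use.) One genuine improvement is available to you: the step you single out as most delicate --- freeness of the $\langle c_\pi\rangle$-action on $\Delta$ --- need not be an input at all. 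Orbit sizes divide $h$, so there are at least $|\Delta|/h=n$ orbits; your fixed-vector argument shows every orbit contains at least one of the $n$ distinct roots $\theta_i$, so there are at most $n$ orbits. Hence there are exactly $n$ orbits, each containing exactly one $\theta_i$, and since the sizes are at most $h$ and sum to $nh$, each orbit has size exactly $h$. This removes the appeal to Springer regularity (or a type-by-type check) entirely, leaving only the standard facts $|\Delta|=nh$ and $\det(c_\pi-1)\neq 0$.
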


Recall that $P_Q = \{\pi \in S_n\,|\, \text{if } \pi(a)\to \pi(b) \in Q_1, \text{ then } a<b \}$.
\begin{Cor}[Lee--Lee conjecture for finite acyclic case]\label{cor_of_prop_bourbaki}
For an acyclic quiver $Q$ of type $ADE$,
\[\bigcup_{\pi\in P_Q}\{\a_\pi(\g)|\g \in \G\} = \{\text{real Schur roots of } Q\} = \D_Q^+.\]
\end{Cor}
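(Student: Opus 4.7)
The plan is to put together three ingredients already established in the excerpt: (i) Remark~\ref{rmk_associatedd_roots_subset_of_real_schur_roots}, which gives the inclusion $\bigcup_{\pi\in P_Q}\{\alpha_\pi(\gamma)\mid\gamma\in\Gamma\}\subseteq\{\text{real Schur roots of }Q\}$ in the $ADE$ case, (ii) the fact that for a Kac--Moody algebra of finite type the set of real Schur roots of an acyclic quiver equals $\Delta_Q^+$, and (iii) Bourbaki's Proposition~\ref{prop_Bourbaki}, which I will use to establish the reverse inclusion $\Delta_Q^+\subseteq\bigcup_{\pi\in P_Q}\{\alpha_\pi(\gamma)\mid\gamma\in\Gamma\}$. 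The only real content is (iii), and everything needed for it has been assembled in the paragraph immediately preceding the corollary.

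The first step is to fix any $\pi\in P_Q$; such a $\pi$ exists because $Q$ is acyclic, so a topological sort of the vertices yields one. For each $i\in[n]$, take the explicit admissible curve $\gamma^{(i)}$ whose word of crossings is $I_{\gamma^{(i)}}=r_i r_{i+1}\cdots r_n$ (the curve pictured just before Proposition~\ref{prop_Bourbaki}); by construction $\alpha_\pi(\gamma^{(i)})=\theta_i$ up to the sign convention that forces the associated root to be positive. Next, iterate the counter-clockwise wrapping map $c\colon\Gamma\to\Gamma$. Per the discussion above Proposition~\ref{prop_Bourbaki}, the identity $\alpha_\pi(c(\gamma))=\pm c_\pi\alpha_\pi(\gamma)$ gives, by induction on $k$,
\[
\{\alpha_\pi(c^k\gamma^{(i)})\mid 0\le k<h\}=\{c_\pi^k\theta_i\mid 0\le k<h\}_+=\Omega_i\cap\Delta_Q^+ .
\]

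For the third step, let $\beta\in\Delta_Q^+$ be any positive root. Proposition~\ref{prop_Bourbaki} asserts $\Delta_Q=\bigsqcup_{i=1}^n\Omega_i$, so $\beta\in\Omega_i$ for exactly one $i$; because $\beta$ is positive, $\beta\in\Omega_i\cap\Delta_Q^+$, and the previous paragraph identifies $\beta$ as $\alpha_\pi(c^k\gamma^{(i)})$ for some $k$. Hence $\beta\in\{\alpha_\pi(\gamma)\mid\gamma\in\Gamma\}$, which proves $\Delta_Q^+\subseteq\bigcup_{\pi\in P_Q}\{\alpha_\pi(\gamma)\mid\gamma\in\Gamma\}$. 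Combining this with Remark~\ref{rmk_associatedd_roots_subset_of_real_schur_roots} and the $ADE$ equality (real Schur roots)~$=\Delta_Q^+$ closes the chain of inclusions into equalities.

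\textbf{Main obstacle.} There is very little resistance here: the substantive input is Bourbaki's Proposition~\ref{prop_Bourbaki}, which is cited rather than proved. The only point that needs a moment of care is ensuring that the sign convention in the definition of $\alpha_\pi(\gamma)$ is compatible with the orbit decomposition, i.e.\ that taking absolute values of the roots $c_\pi^k\theta_i$ and sweeping over all $i$ and $k$ still covers every positive root exactly once. This is automatic from the disjointness in Proposition~\ref{prop_Bourbaki}(1) together with the fact that $\Omega_i$ is closed under the action of $c_\pi$, so one does not have to worry about double counting between the positive and negative portions of the same orbit. Note that a single $\pi\in P_Q$ is already enough to realize all of $\Delta_Q^+$; the union over $P_Q$ in the statement is therefore trivially satisfied.
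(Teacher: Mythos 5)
Your proposal is correct and follows essentially the same route as the paper: one inclusion from Remark~\ref{rmk_associatedd_roots_subset_of_real_schur_roots}, and the reverse inclusion by realizing each $\theta_i$ as the associated root of an explicit curve and then iterating the wrapping map $c$ so that Proposition~\ref{prop_Bourbaki} covers all of $\D_Q^+$. You merely spell out the sign and disjointness bookkeeping slightly more explicitly than the paper does.
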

\begin{proof}
By Remark \ref{rmk_associatedd_roots_subset_of_real_schur_roots}, we know that $\bigcup_{\pi\in P_Q}\{\a_\pi(\g)\,|\,\g \in \G\} \subseteq \D_Q^+$. Note that by Proposition~\ref{prop_Bourbaki}, $\big\{c_\pi^k \theta_i\,|\,0\leq k <h\big\}$ is the set of all roots. Thus the subset of positive roots, denoted by $\big\{c_\pi^k \theta_i\,|\,0\leq k <h\big\}_+$, is the same as~$\D_Q^+$.
\[\D_Q^+=\big\{c_\pi^k \theta_i\,|\,0\leq k <h\big\}_+=\big\{\a_\pi\big(c^k\g\big)\,|\,0\leq k <h\big\} \subseteq \bigcup_{\pi\in P_Q}\{\a_\pi(\g)\,|\,\g \in \G\} \subseteq \D_Q^+.\]
Therefore $ \bigcup_{\pi\in P_Q}\{\a_\pi(\g)\,|\,\g \in \G\} =\D_Q^+$.
\end{proof}

\begin{Exa}Consider the quiver below. Note that $\a_2\in \D^+_Q$.{\samepage
\begin{center}
\begin{tikzpicture}
\node[shape=circle,draw=black, minimum size = .4cm, inner sep = 4pt, outer sep = 0.5pt,](1) at (0,0) {$1$};
\node[shape=circle,draw=black, minimum size = .4cm, inner sep = 4pt, outer sep = 0.5pt,](2) at (2,0) {2};
\node[shape=circle,draw=black, minimum size = .4cm, inner sep = 4pt, outer sep = 0.5pt,](3) at (4,0) {$3$};
\path[->](1) edge (2);
\path[->](2) edge (3);
\end{tikzpicture}
\end{center}}

\noindent
As there is an arrow from vertex 1 to 2 and an arrow from 2 to 3, the only permutation in $P_Q$ is the identity. Then $\pi \in P_Q$ is the identity. Consider the following non-self-crossing admissible curves below.
\begin{center}
	\includegraphics[trim = 40mm 238mm 90mm 20mm, clip, width = .3\linewidth ]{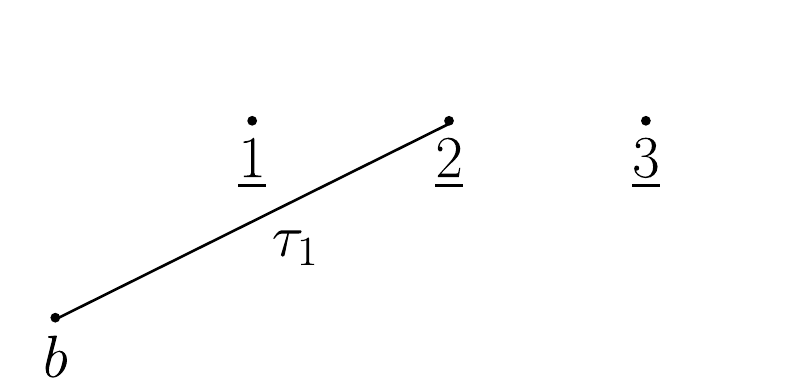}
	\includegraphics[trim = 40mm 196mm 90mm 65mm, clip, width = .3\linewidth ]{ex_many_curves.pdf}
	\includegraphics[trim = 40mm 153mm 90mm 105mm, clip, width = .3\linewidth ]{ex_many_curves.pdf}
\end{center}
Note that $\a_\pi(\tau_1) = \a_2$, $\a_\pi(\tau_2) = s_3s_2\a_3 = \a_2$, and $\a_\pi(\tau_3) = s_3s_1s_3s_1\a_2 = \a_2$. The associated roots for all three curves are $\a_2$. Note that $\tau_1$ is strictly increasing and thus non-decreasing whereas $\tau_2$ and $\tau_3$ are positive but are not non-decreasing. By Corollary~\ref{cor_of_prop_bourbaki}, given a quiver $Q$ and a root $\a \in \D_Q^+$, there exists a non-self-crossing admissible curve whose associated root is $\a$. However, we do not know that whether that curve is non-decreasing.
\end{Exa}

Given a positive root, there could be many non-self-crossing admissible curves in $\G$ that correspond to that root. It is a natural question to see if there are non-decreasing non-self-crossing admissible curves among those non-self-crossing admissible curves. The following is our main theorem.
\begin{Thm}\label{thm_main}
Let $Q$ be an acyclic quiver of type $A$, $D$, $E_6$ and $E_7$. Then
\[\bigcup_{\pi\in P_Q}\{\a_\pi(\g)\,|\,\g \in \G_{\pi,nd}\} = \D^+_Q.\]
Furthermore, if $Q$ is an acyclic quiver of type $A$, then
$\bigcup_{\pi\in P_Q}\{\a_\pi(\g)\,|\,\g \in \G_{\pi,s}\} = \D^+_Q$.
\end{Thm}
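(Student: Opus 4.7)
The inclusion $\bigcup_{\pi \in P_Q}\{\a_\pi(\g) : \g \in \G_{\pi,nd}\} \subseteq \D_Q^+$ is immediate from Remark \ref{rmk_associatedd_roots_subset_of_real_schur_roots} together with $\G_{\pi,s} \subseteq \G_{\pi,nd} \subseteq \G$ (Remark \ref{rmk_diff_curves}). The substantive direction is the reverse inclusion: for every positive root $\a \in \D_Q^+$, I need to exhibit some $\pi \in P_Q$ and a non-decreasing (or, in type $A$, strictly increasing) non-self-crossing admissible curve $\g$ with $\a_\pi(\g) = \a$. Corollary \ref{cor_of_prop_bourbaki} already produces, for any fixed $\pi \in P_Q$, a concrete realization $\g = c^{k}\g_i$; but these Coxeter-orbit representatives generically wrap around all marked points $k$ times and generically fail the non-decreasing condition. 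The plan is therefore to \emph{replace} each such $c^{k}\g_i$ by a different, hand-designed curve with the same associated root whose crossing word satisfies the non-decreasing condition at every prefix.

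In the simply-laced setting, applying $s_j$ to $\a = \sum b_i \a_i$ only changes the coefficient of $\a_j$, replacing $b_j$ by $\sum_{i \sim j} b_i - b_j$. Hence, for a curve $\g$ with $\pi(I_\g) = \r_{a}\r_{j_1}\cdots\r_{j_m}$, the non-decreasing condition reduces to a combinatorial inequality on the sequence of prefix-roots: at every step $t$, the current coefficient of $\a_{j_t}$ must be at most the sum of the current coefficients of the Dynkin-diagram neighbors of $j_t$. My goal in each type is to design crossing patterns whose prefix-roots satisfy this inequality while landing at the desired root.

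For type $A$ I would pick $\pi$ to be any linear extension of the acyclic quiver; each positive root is an interval $\a_i + \a_{i+1} + \cdots + \a_j$. A single descending arc from the appropriate marked point down to the basepoint that crosses the rays $\r_{\pi^{-1}(i+1)},\dots,\r_{\pi^{-1}(j)}$ in an order compatible with the Dynkin chain realizes this root, and each reflection adds a fresh simple-root coefficient, so the prefix-sequence is strictly increasing; this establishes the stronger strictly-increasing statement in type $A$. For type $D$ I would stratify positive roots by their coefficient at the trivalent vertex. Roots supported on a type $A$ subdiagram are handled by the previous construction. For the remaining roots I splice two type-$A$ arcs across the branch; the roots with coefficient two at the branch, which are the main obstruction to a naive splicing, are realized by appending a half-twist around the two short-arm marked points, which bumps that coefficient up by one while preserving the neighbor-sum inequality at every intermediate step.

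For $E_6$ and $E_7$ I plan a finite but careful case analysis over the $36$ and $63$ positive roots respectively. Each root is reduced either to a type-$A$ or type-$D$ sub-situation by deleting an appropriate leaf of the Dynkin diagram, with the handful of exceptional roots handled by explicit curves organized by the support of the root. The main technical obstacle throughout is the verification of the non-decreasing inequality at crossings near the trivalent vertex, where the neighbor-sum bound is tight; it is precisely this tightness that forces the case-by-case constructions in types $D$ and $E$, and the bookkeeping for $E_7$, whose positive roots include the largest simply-laced coefficients considered here, will be the most delicate part of the argument.
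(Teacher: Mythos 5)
Your proposal correctly isolates the easy inclusion and the right high-level decomposition (interval roots in type $A$, stratification by the trivalent coefficient in type $D$, finite case analysis in type $E$), but as written it is a plan rather than a proof, and the steps you leave as ``I would design a curve'' are exactly the hard ones. The central difficulty is never engaged: given a word $s_{j_m}\cdots s_{j_1}\a_a$ whose prefix-roots satisfy your neighbor-sum inequality, it does not follow that there is a \emph{non-self-crossing} admissible curve whose crossing sequence is $\r_a\r_{j_1}\cdots\r_{j_m}$. Realizability as a simple curve is the whole content of the theorem; the paper's type-$A$ argument (Proposition~\ref{prop_a_strict_inc}) spends most of its effort on an induction showing that the braid-group elements $\S_i$ applied to $\g_{\pi^{-1}(\ell)}$ produce a curve with the claimed crossing word, and this only works for the specific unimodal permutation $\pi_Q$, not for an arbitrary linear extension as you suggest. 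Likewise your type-$D$ ``half-twist around the two short-arm marked points'' and the type-$E$ ``explicit curves organized by support'' are asserted, not constructed, and the non-decreasing and non-self-crossing properties of the resulting curves are not verified.

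The idea you are missing, and which lets the paper avoid almost all of this hand construction, is the pair Lemma~\ref{lem_reducing_root} and Lemma~\ref{lem_cox_transf}: if $\a$ has a unique index $k$ with $s_k\a >_D \a$ and an adjacent $j$ with $s_j\a <_D \a$, then $c_\pi\a <_D \a$ or $c_\pi^{-1}\a <_D \a$; and if a smaller root $\b$ with $\b <_D c_\pi\b$ is already realized by a non-decreasing curve $\g$, then $c_\pi\b$ is realized by the explicitly non-self-crossing curve $c\g$ obtained by wrapping $\g$ once around all marked points. Combined with the full-subquiver reduction (Lemma~\ref{lem_subquiver}), this gives an induction on height that disposes of all of type $D$, all of $E_6$, and most of $E_7$ without constructing any curve by hand. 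Even with this machinery, the paper still hits $E_7$ roots (e.g., $\begin{smallmatrix}1&2&3&3&2&1\\&&&1&&\end{smallmatrix}$ for certain orientations) for which neither $c_\pi\a$ nor $c_\pi^{-1}\a$ is comparable to $\a$ for any $\pi\in P_Q$, and there it must exhibit computer-found explicit curves (Tables~\ref{E_7_table_1233211_1}--\ref{E_7_table_12232111}); your proposal gives no mechanism for detecting or handling these cases. Without either the Coxeter-descent induction or verified explicit constructions, the reverse inclusion is not established.
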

\begin{Rmk}
We have a supporting evidence that Theorem~\ref{thm_main} holds for quivers of type $E_8$ as well as affine quivers of type~$A$. We will discuss those two cases in Appendix~\ref{appendix}.
\end{Rmk}

Note that Proposition~\ref{prop_Bourbaki} does not imply this theorem. To prove this theorem, we first observe a relationship between the root system of a quiver and the root system of a full subquiver. Let $Q=
(Q_0,Q_1)$ be a quiver and $Q'$ be a full subquiver induced by $V \subset Q_0$. Recall that $P_Q$ is the set of all permutations in $S_{|Q_0|}$ such that if $i>j$, then there is no arrow $\pi(i)\to \pi(j)$ in $Q_1$. As~$\pi(i)$ represents a vertex of $Q$, we can think of $\pi$ as a map $\pi\colon \{1,\ldots, |Q_0|\}\to Q_0$. Thus $P_{Q'}$ can be thought of as the set of bijective maps $\pi'\colon \{1,\ldots, |V|\} \to V$ such that if $i >j$, then there is no arrow $\pi'(i) \to \pi'(j)$ in $Q'$. Let $\pi \in P_Q$ and $\{a_1,\ldots, a_k\}$ be a subset of $\{1,\ldots,|Q_0|\}$ such that $a_i <a_j$ for $i<j$ and $\{a_1,\ldots,a_k \} = \pi^{-1}(V)$. Define $\varphi(\pi)$ to be the map $\varphi(\pi)\colon \{1,\ldots,k\} \to V$ given by $i \mapsto \pi(a_i)$. Note that if $i >j$, then $a_i > a_j$. Thus there is no arrow from $\pi(a_i) \to \pi(a_j)$. Thus we may think of $\varphi(\pi)$ as of an element of $P_{Q'}$. Abusing notation, we will write $\varphi(\pi)\in P_{Q'}$.

\begin{Lem} \label{lem_subquiver}Let $Q=(Q_0,Q_1)$ be an acyclic quiver and $Q'$ be a connected full subquiver induced by $V \subset Q_0$. For $\pi' \in P_{Q'}$ such that $\varphi^{-1}(\pi')$ is not empty,{\samepage
\begin{itemize}\itemsep=0pt
\item $\{\a_{\pi'}(\g)\,|\,\g \in \G_{\pi',s}\} \subseteq \{\a_{\pi}(\g)\,|\,\g \in \G_{\pi,s}\}$,
\item $\{\a_{\pi'}(\g)\,|\,\g \in \G_{\pi',nd}\} \subseteq \{\a_{\pi}(\g)\,|\,\g \in \G_{\pi,nd}\}$,
\item $\{\a_{\pi'}(\g)\,|\,\g \in \G_{\pi',p}\} \subseteq \{\a_{\pi}(\g)\,|\,\g \in \G_{\pi,p}\}$,
\end{itemize}
where $\pi$ is any permutation in $\varphi^{-1}(\pi')$, where $\varphi$ is given above.}
\end{Lem}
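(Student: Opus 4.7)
The plan is to construct a geometric lift $\iota$ that sends each non-self-crossing admissible curve in the disc with marked points indexed by $V$ to a non-self-crossing admissible curve in $H$, and then to verify that $\iota$ intertwines the two associated-root maps. Fix $\pi\in\varphi^{-1}(\pi')$, and recall that by definition of $\varphi$ we have $\pi(a_i)=\pi'(i)$ for $i=1,\ldots,k$. Given $\g'$ with ray-crossing word $I_{\g'}=\r_{k_0}\r_{k_1}\cdots\r_{k_m}$, I would define $\iota(\g')$ to be the (unique up to isotopy) non-self-crossing admissible curve in $H$ with ray-crossing word $\r_{a_{k_0}}\r_{a_{k_1}}\cdots\r_{a_{k_m}}$ that crosses no ray $\r_j$ for $j\notin\{a_1,\ldots,a_k\}$. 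Existence and uniqueness rest on the standard topological fact that ``forgetting'' the marked points $\{p_j : j\notin\{a_1,\ldots,a_k\}\}$ and their rays induces a bijection between isotopy classes in the smaller disc and isotopy classes of non-self-crossing admissible curves in $H$ that avoid those rays.

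Once $\iota$ is in place, the matching of associated roots is immediate from $\pi(a_i)=\pi'(i)$: for each $0\leq j\leq m$,
\[ s_{\pi(a_{k_j})}\cdots s_{\pi(a_{k_1})}\a_{\pi(a_{k_0})} = s_{\pi'(k_j)}\cdots s_{\pi'(k_1)}\a_{\pi'(k_0)}. \]
Since $Q'$ is a full subquiver of $Q$, the subgroup $W_{Q'}=\langle s_i : i\in V\rangle$ of the Weyl group of $Q$ preserves the subroot system $\D_{Q'}\subset\D_Q$, and a vector in $\D_{Q'}$ has the same $\Z$-expansion in the simple roots whether viewed in $\D_{Q'}$ or in $\D_Q$ (with coefficient zero on $\a_j$ for $j\notin V$). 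In particular, positivity of each partial product is the same on the two sides of the display, so $\a_\pi(\iota(\g'))=\a_{\pi'}(\g')$ in $\D_Q^+$.

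The same identification transfers all three strengthenings: being positive, non-decreasing, or strictly increasing with respect to a permutation is a coordinatewise statement about the partial products $s_{\pi'(k_j)}\cdots s_{\pi'(k_1)}\a_{\pi'(k_0)}$ in the simple-root basis, and these partial products agree with their $\pi$-analogues by the display. Hence $\g'\in\G_{\pi',s}$ (resp.\ $\G_{\pi',nd}$, $\G_{\pi',p}$) implies $\iota(\g')\in\G_{\pi,s}$ (resp.\ $\G_{\pi,nd}$, $\G_{\pi,p}$), yielding the three desired inclusions. The main obstacle is the geometric step: one must verify that $\iota$ is well-defined on isotopy classes and produces a non-self-crossing admissible curve satisfying conditions (a)--(c) with exactly the prescribed sequence of ray crossings. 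This reduces to a standard mapping-class-group argument for simple arcs in a punctured disc under the forgetful map, but it deserves careful attention in the specific admissible-curve setup.
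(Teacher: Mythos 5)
Your proposal is correct and follows essentially the same route as the paper: the paper likewise constructs the lifted curve $\g$ by keeping the same crossing sequence on the rays indexed by $\pi^{-1}(V)$ while passing under the marked points $p_{\pi^{-1}(k)}$ for $k\notin V$, observes that the relative order of $\pi^{-1}(j_i)$ matches that of $\pi'^{-1}(j_i)$, and then notes that the reflection words (hence all partial products and the $\leq_D$/positivity conditions) coincide. Your treatment is slightly more explicit about why the lift exists as a non-self-crossing admissible curve, a point the paper asserts without elaboration; only existence (not the uniqueness you claim) is actually needed.
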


\begin{proof}
Let $\a = \a_{\pi'}(\g')$ for $\pi' \in P_{Q'}$ and some non-self-crossing curve $\g'$. Let $a, j_1,\ldots, j_m$ be vertices in $V$ such that $\pi'(I(\g')) = \r_a\r_{j_1}\cdots \r_{j_m}$. Let $\pi \in \varphi^{-1}(\pi')$. Then $\pi^{-1}(j_i) - \pi^{-1}(j_{i+1})$ and $\pi'^{-1}(j_i) - \pi'^{-1}(j_{i+1})$ are both positive or both negative. Let $\g$ be a non-self-crossing curve such that $\g(0) = p_{\pi^{-1}(a)}$, crosses $\r_{\pi^{-1}(j_i)}$ in the order that~$\g'$ crosses, and goes under the marked points $p_{\pi^{-1}(k)}$ for all $k \not\in V$. Then $\pi(I(\g)) = \r_a\r_{j_1}\cdots \r_{j_m}$.

If $\g' \in \G_{\pi', s}$, then $s_{j_i}\cdots s_{j_1}\a_a >_D s_{j_{i-1}}\cdots s_{j_1}\a_a $ for all $i\in [m]$. Thus $\g \in \G_{\pi,s}$. Similarly, if $\g' \in \G_{\pi,nd}$, then $\g \in \G_{\pi,nd}$ and if $\g' \in \G_{\pi,p}$, then $\g \in \G_{\pi,p}$.
\end{proof}

\begin{Rmk}\label{rmk_subquiver}
If $Q=(Q_0,Q_1)$ is an acyclic quiver of finite type and $Q'$ is a connected full subquiver of $Q=(V,E)$ such that $Q_0\setminus V=\{v\}$, then $v$ is a sink or source of $Q$. Without loss of generality, assume that $v$ is a sink. Then given any $\pi' \in P_{Q'}$, define $\pi$ be given by $\pi(i) = \pi'(i)$ for $i < |Q_0|$ and $\pi(|Q_0|) = v$. Note that $\pi \in P_Q$ and $\varphi(\pi) = \pi'$. We can iterate this process to see that given any quiver $Q$ of finite type and a connected full subquiver $Q'$, if $\pi' \in P_{Q'}$, then~$\varphi^{-1}(\pi')$ is not empty.
\end{Rmk}

\begin{Rmk}In Lemma \ref{lem_subquiver}, a vertex $i$ is ``ignored'' if $\b_i=0$. This method of removing unused vertices is a common approach. For example, a maximal green sequence of a full subquiver of a quiver can be obtained from a maximal green sequence of the quiver by this approach; see in \cite[Theorem~3.3]{GarMcSer}.
\end{Rmk}
If $\g$ is a non-self-crossing admissible curve, then so is $c \g$. Furthermore, as $I(c\g) =I(\g) \r_n\r_{n-1}\allowbreak \cdots\r_1$ we can compare $\a$ and $c_\pi\a$ to say more about~$c \g$.

\begin{Lem}\label{lem_cox_transf}
Let $Q$ be an acyclic quiver, $\pi \in P_Q$, and $c_\pi$ be a Coxeter transformation of the Weyl group of $\D_Q$. If $\a \in \{\a_\pi(\g)\,|\,\g\in \G_{\pi,nd}\}$ and $\a <_D c_\pi \a$, then $c_\pi\a \in \{\a_\pi(\g)\,|\,\g\in \G_{\pi,nd}\}$. Similarly, if $\a \in \{\a_\pi(\g)\,|\,\g\in \G_{\pi,nd}\}$ and $\a <_D c_\pi^{-1} \a$, then $c_\pi^{-1}\a \in \{\a_\pi(\g)\,|\,\g\in \G_{\pi,nd}\}$.
\end{Lem}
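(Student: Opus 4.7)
The natural move is to exhibit an explicit non-decreasing curve whose associated root is $c_\pi\alpha$. Write $\alpha=\alpha_\pi(\gamma)$ for some $\gamma\in\Gamma_{\pi,nd}$ and take $\tilde\gamma:=c\gamma$, where $c\colon\Gamma\to\Gamma$ is the map (defined just before Proposition~\ref{prop_Bourbaki}) that wraps $\gamma$ counterclockwise around all marked points at the end. By construction $\tilde\gamma$ is still non-self-crossing, and the text already records that $I(\tilde\gamma)=I(\gamma)\,\rho_n\rho_{n-1}\cdots\rho_1$ and $\alpha_\pi(\tilde\gamma)=c_\pi\alpha$ up to sign. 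The hypothesis $\alpha<_D c_\pi\alpha$ forces $c_\pi\alpha$ to be positive, so $\alpha_\pi(\tilde\gamma)=c_\pi\alpha$. It therefore suffices to show $\tilde\gamma\in\Gamma_{\pi,nd}$.

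The partial-product sequence of $\tilde\gamma$ starts with the partial products from $\gamma$ (non-decreasing by hypothesis) and continues through
\[
\alpha^{(0)}=\alpha,\qquad \alpha^{(k)}=s_{\pi(n-k+1)}\alpha^{(k-1)}\quad(1\le k\le n),
\]
ending at $\alpha^{(n)}=c_\pi\alpha$. Each single-step difference is a scalar multiple of a simple root,
\[
\alpha^{(k)}-\alpha^{(k-1)}=-\bigl\langle\alpha^{(k-1)},\,\alpha_{\pi(n-k+1)}^{\vee}\bigr\rangle\,\alpha_{\pi(n-k+1)}.
\]
This is the main point: because $\pi$ is a permutation, the simple roots $\alpha_{\pi(n-k+1)}$ appearing as $k$ ranges over $\{1,\dots,n\}$ are all distinct, so the simple-root expansion of
\[
c_\pi\alpha-\alpha=\sum_{k=1}^{n}\bigl(\alpha^{(k)}-\alpha^{(k-1)}\bigr)
\]
is a direct sum with no cancellation. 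The hypothesis $\alpha<_D c_\pi\alpha$ gives $c_\pi\alpha-\alpha\geq_D 0$, so every one of these coefficients is $\ge 0$; equivalently $\alpha^{(k)}\geq_D\alpha^{(k-1)}$ for each $k$. Concatenating with the already non-decreasing prefix from $\gamma$, we conclude $\tilde\gamma\in\Gamma_{\pi,nd}$.

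The $c_\pi^{-1}$ statement is handled by the mirror construction: replace $c\gamma$ by the curve $c^{-1}\gamma$ that wraps clockwise, appending $\rho_1\rho_2\cdots\rho_n$ to $I(\gamma)$ so that $\alpha_\pi(c^{-1}\gamma)=s_{\pi(n)}\cdots s_{\pi(1)}\alpha=c_\pi^{-1}\alpha$ (positive by hypothesis). The same permutation/distinct-simple-roots argument applied to $c_\pi^{-1}\alpha-\alpha\geq_D 0$ yields non-decreasingness of the appended tail.

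I expect no serious obstacle: the entire argument reduces to the one-line observation that a single sweep of $c_\pi$ (or $c_\pi^{-1}$) touches each simple reflection exactly once, which decouples the coefficient-wise comparison into independent scalar inequalities. The only delicate bookkeeping is verifying the order in which the new rays are crossed matches the factorization $c_\pi=s_{\pi(1)}\cdots s_{\pi(n)}$, but this is exactly the content of the $I(c(\gamma))$ formula stated in the paper. No use of the $\pi\in P_Q$ acyclicity condition is actually needed beyond what is already baked into $c_\pi$.
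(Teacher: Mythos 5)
Your proposal is correct and follows essentially the same route as the paper: take $c\gamma$, use $I(c\gamma)=I(\gamma)\rho_n\cdots\rho_1$, and observe that because each simple reflection occurs exactly once in $c_\pi$, the coefficient of $\alpha_{\pi(k)}$ is unchanged by the reflections preceding and following $s_{\pi(k)}$ in the sweep, so $\alpha<_D c_\pi\alpha$ decouples into the individual one-step inequalities. Your telescoping-sum phrasing and the paper's coefficient-matching phrasing are the same argument.
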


\begin{proof}
Given $\pi \in P_Q$ and $\g \in \G_{\pi,nd}$, let $\a = \a_\pi(\g)$. Assume that $\a<_D c_\pi\a$. As $c\colon \G\to\G$, $c\g$ is a non-self-crossing admissible curve. To show that $c\g$ is non-decreasing, note that $I_{c\g} = I_{\g} \oplus (n,\ldots,2,1)$. As $\g \in \G_{\pi,nd}$, it suffices to show that $\a\leq_Ds_{\pi(n)}\a$ and $s_{\pi(k+1)}s_{\pi(k+2)}\cdots s_{\pi(n)}\a \leq_D s_{\pi(k)}s_{\pi(k+1)}\cdots s_{\pi(n)}\a$ for all $k \in [n-1]$. Let $a =\a$ or $a=s_{\pi(k+1)}s_{\pi(k+2)}\cdots s_{\pi(n)}\a$. Given any root, a simple reflection affects the coefficient of the corresponding simple root only. Thus~$a$ and~$s_{\pi(k)}a$ differ only by the coefficient of~$\a_{\pi(k)}$. As each simple reflections appears only once in~$c_\pi$, the coefficient of~$\a_{\pi(k)}$ in~$a$ matches the coefficient of~$\a_{\pi(k)}$ in~$\a$. Similarly, the coefficient of~$\a_{\pi(k)}$ in~$s_{\pi(k)}a$ matches the coefficient of $\a_{\pi(k)}$ in $c_\pi\a$. As $\a <_D c_\pi\a$, we know that $a \leq_D s_{\pi(k)}a$. Therefore $c\g$ is non-decreasing.

If $\a<_D c_\pi^{-1}\a$, then $c^{-1} \g$ is a non-self-crossing admissible curve and $c_\pi^{-1}\a= s_{\pi(n)}s_{\pi(n-1)}\cdots\allowbreak s_{\pi(1)}\a$. Using a similar argument as above, we can see that $c^{-1}\g \in \G_{\pi,nd}$ and $c_\pi^{-1}\a \in \{\a_\pi(\g)\,|\,\g\in \G_{\pi,nd}\}$.
\end{proof}

\section[Type A]{Type $\boldsymbol{A}$}\label{type_a}
In this section, we focus on quivers of type $A$, i.e., quivers whose underlying undirected graph is
\begin{center}
\begin{tikzpicture}
\node[shape=circle,draw=black, minimum size = .6cm, inner sep = 4pt, outer sep = 0.5pt,](1) at (0,0) {$n$};
\node[shape=circle,draw=black, minimum size = .9cm, inner sep = .5pt, outer sep = 0.5pt,](2) at (2,0) {{\scriptsize $n-1$}};
\node(3) at (4,0) {\ldots};
\node[shape=circle,draw=black, minimum size = .4cm, inner sep = 4pt, outer sep = 0.5pt,](4) at (6,0) {$2$};
\node[shape=circle,draw=black, minimum size = .4cm, inner sep = 4pt, outer sep = 0.5pt,](5) at (8,0) {$1$};
\path[-](1) edge node[above] {$e_{n-1}$} (2) ;
\path[-](2) edge node[above] {$e_{n-2}$} (3);
\path[-](3) edge node[above] {$e_{2}$} (4);
\path[-](4) edge node[above] {$e_{1}$} (5);
\end{tikzpicture}
\end{center}

Let $\cA_n$ be the set of such quivers. As mentioned before, all positive roots of type $A$ are real Schur roots. It is known that all the positive roots of type $A$ are of the form
\[\sum_{k=\ell}^{m} \a_k, \qquad \text{where}\qquad 1\leq \ell \leq m \leq n.\]

We will prove that for certain $\pi \in P_Q$, any positive root is the associated root of a strictly increasing non-self-crossing admissible curve. We show this by providing an algorithm to construct such curves. Furthermore, for any permutation $\pi\in P_Q$, any root is the associated root of a non-decreasing non-self-crossing admissible curve.

\subsection{Strictly increasing curves}
Let $Q \in \cA_n$ in this section. To show that for a certain permutation $\pi \in P_Q$, $\{\a_\pi(\g)\,|\,\g\in \G_{\pi,s}\} = \D_Q^+$, we first define a unimodal permutation.
\begin{Def}
A permutation $\pi\in S_n$ is said to be \emph{unimodal} if there exists $k\in [n]$ such that for all $i<j\leq k$, $\pi(i)<\pi(j)$ and for all $k\leq i<j$, $\pi(i)>\pi(j)$. In particular, $\pi(k) = n$. Let $U_n$ be the set of all unimodal permutations in $S_n$.
\end{Def}

For a permutation $\pi \in U_n$, we define a quiver $Q_\pi$ in $\cA_n$ by giving an orientation of the edges $e_i$. If $\pi^{-1}(i) < \pi^{-1}(n)$, then $t(e_i) = i$ and if $\pi^{-1}(i)>\pi^{-1}(n)$, then $h(e_i) = i$. Define $\omega\colon U_n \to \cA_n$ by $\omega(\pi) = Q_\pi$.

For example, $\omega\left(\left(\begin{smallmatrix}
1 & 2 & 3& 4 & 5\\
1 & 3 & 5 & 4 &2\\
\end{smallmatrix}\right)\right)$ is
\begin{center}
\begin{tikzpicture}
\node[shape=circle,draw=black, minimum size = .4cm, inner sep = 4pt, outer sep = 0.5pt,](1) at (0,0) {$5$};
\node[shape=circle,draw=black, minimum size = .4cm, inner sep = 4pt, outer sep = 0.5pt,](2) at (2,0) {$4$};
\node[shape=circle,draw=black, minimum size = .4cm, inner sep = 4pt, outer sep = 0.5pt,](3) at (4,0) {$3$};
\node[shape=circle,draw=black, minimum size = .4cm, inner sep = 4pt, outer sep = 0.5pt,](4) at (6,0) {$2$};
\node[shape=circle,draw=black, minimum size = .4cm, inner sep = 4pt, outer sep = 0.5pt,](5) at (8,0) {$1$};
\path[->](1) edge node[above] {$e_{4}$} (2) ;
\path[<-](2) edge node[above] {$e_{3}$} (3);
\path[->](3) edge node[above] {$e_{2}$} (4);
\path[<-](4) edge node[above] {$e_{1}$} (5);
\end{tikzpicture}
\end{center}

For any quiver $Q \in \cA_n$, let $E_u = \{i\in [n-1]\,|\, t(e_i) = i\}$ and $E_d = \{i\in [n-1]\,|\, h(e_i) = i\}$. We define a unimodal permutation using the elements of $E_u$ to form the increasing sequence and the elements of $E_d$ to form the decreasing sequence. Define $a_1 = \min E_u$ and $a_i = \min (E_u\setminus \{a_1,\ldots, a_{i-1}\})$ for $2\leq i \leq |E_u|$. Let $b_1 = \max E_d$, and $b_i= \max(E_d \setminus \{b_1,\ldots, b_{i-1}\})$ for $2\leq i \leq n-|E_u|-1$. Then define a unimodal permutation $\pi_Q$ as below
\[\pi_Q = \left(\begin{smallmatrix}
1 & 2 & \cdots & |E_u| & |E_u|+1 & |E_u|+2 & \cdots &n\\
a_1 & a_2& \cdots &a_{|E_u|} & n & b_1 & \cdots & b_{n-|E_u|-1} \\
\end{smallmatrix}\right).\]

Let $\psi\colon \cA_n \to U_n$ be a map defined by $\psi\colon Q\mapsto \pi_Q$. For example, given $Q$ below,
\begin{center}
\begin{tikzpicture}
\node[shape=circle,draw=black, minimum size = .4cm, inner sep = 4pt, outer sep = 0.5pt,](2) at (2,0) {$4$};
\node[shape=circle,draw=black, minimum size = .4cm, inner sep = 4pt, outer sep = 0.5pt,](3) at (4,0) {$3$};
\node[shape=circle,draw=black, minimum size = .4cm, inner sep = 4pt, outer sep = 0.5pt,](4) at (6,0) {$2$};
\node[shape=circle,draw=black, minimum size = .4cm, inner sep = 4pt, outer sep = 0.5pt,](5) at (8,0) {$1$};
\path[->](2) edge (3);
\path[<-](3) edge (4);
\path[<-](4) edge (5);
\end{tikzpicture}
\end{center}
the unimodal permutation given by $\psi$ is $\psi(Q)= \left(\begin{smallmatrix}
1 & 2 &3&4\\
1 & 2 & 4 & 3\\
\end{smallmatrix}\right)$.

\begin{Lem}
There exists a bijection between $\cA_n$ and $U_n$. \label{anun}
\end{Lem}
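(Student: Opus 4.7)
The plan is to verify that $\omega\colon U_n \to \cA_n$ and $\psi\colon \cA_n \to U_n$, defined just above the lemma, are mutually inverse. The conceptual reason a bijection exists is that both sides are parametrized by the same piece of data, a subset $S\subseteq [n-1]$: a quiver $Q\in\cA_n$ is determined by the partition $[n-1]=E_u(Q)\sqcup E_d(Q)$ encoding edge orientations, while a unimodal $\pi\in U_n$ with peak position $k$ (so $\pi(k)=n$) is determined by $\{\pi(1),\ldots,\pi(k-1)\}\subseteq [n-1]$, since unimodality forces the order on both sides of $k$. Both $\omega$ and $\psi$ implement the identification $S\leftrightarrow E_u$, so the inverse verifications amount to chasing definitions.

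For $\psi\circ\omega = \mathrm{id}_{U_n}$, I would fix $\pi\in U_n$ with $\pi(k)=n$. By definition of $\omega$, in $Q_\pi$ we have $t(e_i)=i$ iff $\pi^{-1}(i)<k$, so
\[E_u(Q_\pi)=\{\pi(1),\ldots,\pi(k-1)\},\qquad E_d(Q_\pi)=\{\pi(k+1),\ldots,\pi(n)\}.\]
Unimodality of $\pi$ means these two lists are already the increasing enumeration $a_1<\cdots<a_{|E_u|}$ of $E_u(Q_\pi)$ and the decreasing enumeration $b_1>\cdots>b_{n-|E_u|-1}$ of $E_d(Q_\pi)$ used to build $\psi(Q_\pi)$, so $\psi(\omega(\pi))=\pi$. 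For $\omega\circ\psi=\mathrm{id}_{\cA_n}$, I would fix $Q\in\cA_n$ and set $\pi_Q=\psi(Q)$. Then $\pi_Q^{-1}(n)=|E_u(Q)|+1$, and for $i\in[n-1]$ the inequality $\pi_Q^{-1}(i)<\pi_Q^{-1}(n)$ holds precisely when $i$ appears among $a_1,\ldots,a_{|E_u|}$, i.e., when $i\in E_u(Q)$. Hence in $\omega(\pi_Q)$ one has $t(e_i)=i$ iff $i\in E_u(Q)$, which reproduces the orientation of $e_i$ in $Q$, so $\omega(\psi(Q))=Q$.

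I do not anticipate any real obstacle; the only technicalities are confirming that $\omega(\pi)\in\cA_n$ is well defined (for every $i\in[n-1]$, $\pi^{-1}(i)\ne\pi^{-1}(n)$, so exactly one of the two orientation rules applies to $e_i$) and that $\psi(Q)\in U_n$ is well defined (the string $a_1,\ldots,a_{|E_u|},n,b_1,\ldots,b_{n-|E_u|-1}$ is a permutation of $[n]$ because $E_u\sqcup E_d=[n-1]$, and it is unimodal with peak value $n$ by construction). Both are immediate from the definitions, so the whole argument is essentially a two-way bookkeeping check.
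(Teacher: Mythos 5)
Your proposal is correct and follows essentially the same route as the paper: both verify that $\omega$ and $\psi$ are mutually inverse by checking that $E_u(Q_\pi)=\{\pi(1),\ldots,\pi(k-1)\}$ recovers $\pi$ and that the orientation rule $t(e_i)=i \iff i\in E_u(Q)$ recovers $Q$. Your added observation that both sides are parametrized by a subset of $[n-1]$ is a nice conceptual gloss but the underlying bookkeeping is identical to the paper's.
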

\begin{proof}
We claim that $\omega\colon U_n\to \cA_n$ given above is a bijection. It suffices to show that~$\omega$ and~$\psi$ are inverses. To show that $\omega(\psi(Q))=Q$, it suffices to show that the arrows, $e_i$, have the same orientations in both $Q$ and $\omega(\psi(Q))$. Let $\pi_Q = \psi(Q)$. Let $c= \pi_Q^{-1}(n)$ and $a=\pi_Q^{-1}(i)$ for some $i\in [n-1]$. If $t(e_{i}) = i$, then $ a< c$. Thus in $ \omega(\pi_Q),$ $t(e_{\pi_Q(a)}) = i$. Similarly, if $h(e_i) = i$, then $h(e_{\pi_Q(a)})= i$ in $\omega(\pi_Q)$. Thus $\omega(\pi_Q)$ and $Q$ have the same orientations for all edges, which means $\omega(\psi(Q))=Q$.

To see that $\psi(\omega(\pi)) = \pi$, let $Q_\pi = \omega(\pi)$ and $k=\pi^{-1}(n)$. Then
\[E_u = \{i\in [n-1]\,|\, t(e_i) = i\}=\big\{i\in [n-1]\,|\, \pi^{-1}(i) <\pi^{-1}(n)\big\} = \{\pi(i)\,|\, i <k\}.\]

 As $\pi$ is unimodal, $\pi(1) < \pi(2) < \cdots < \pi(k-1)$. Thus $a_1 = \min E_u = \pi(1)$, $a_2 =\pi(2), \ldots$, and $a_{k-1}= \pi(k-1)$. Similarly, we can see that $E_d = \{\pi(i)\,|\,i >k\}$ and $b_1 = \pi(k+1), \ldots, b_{n-k} = \pi(n)$. Thus $\psi(\omega(\pi)) = \pi$. Therefore $\psi$ and $\omega$ are inverses.
\end{proof}

\begin{Rmk}Note that given $Q\in \cA_n$ and $\pi_Q = \psi(Q)$, there are no arrows from the vertex~$\pi_Q(a)$ to the vertex $\pi_Q(b)$ in $Q$ if $a >b$. Thus $\pi_Q \in P_Q$. By Lemma~\ref{anun}, each quiver $Q$ has a~unimodal permutation in $P_Q$ as $P_Q \cap U_n = \{\pi_Q\}$.
\end{Rmk}

We will describe non-self-crossing admissible curves using the braid group action on~$\{\g_1,\allowbreak \ldots,\g_n \}$. Recall $\s_{[i,j]} = \s_{j-1}\cdots \s_{i+1}\s_i$ and $\s_{[j,i]} = \s_i\s_{i+1} \cdots \s_{j-1}$ for $i<j$. Given $\pi \in U_n$, let
\[\Sigma_i = \begin{cases}
\s_{[\pi^{-1}(i)+1,\pi^{-1}(i+1)]}^{-1}\s_{\pi^{-1}(i)}& \text{if } \pi^{-1}(i) <\pi^{-1}(i+1),\\
\s_{[\pi^{-1}(i)-1,\pi^{-1}(i+1)]}\s_{\pi^{-1}(i)-1}^{-1}& \text{if } \pi^{-1}(i) >\pi^{-1}(i+1).
\end{cases}\]

From here on, let $\underline{i} = p_{\pi(i)}$. To depict $\S_i$, consider $\pi =\left(\begin{smallmatrix}
1 & 2 & 3& 4 & 5&6\\
3 & 6 & 5 & 4 &2&1\\
\end{smallmatrix}\right).$ Then $\S_2\g_{\pi^{-1}(2)} = \s_{[4,1]}\s^{-1}_{4}\g_5$ is
\begin{center}
\vspace*{-6mm}

\includegraphics[trim = 20mm 171mm 60mm 85mm, clip, width =.47\linewidth ]{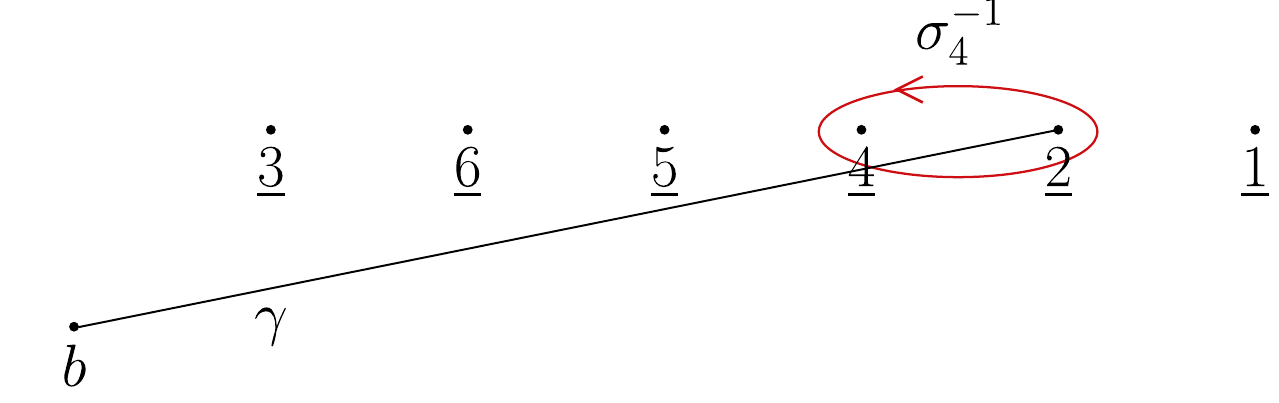}
\includegraphics[trim =20mm 110mm 60mm 130mm, clip, width = .47\linewidth ]{an_ex_1_rev.pdf}\\
\includegraphics[trim =20mm 68mm 60mm 195mm, clip, width = .47\linewidth ]{an_ex_1_rev.pdf}
\end{center}

Note that $\S_2\g_{\pi^{-1}(2)}$ loops around $\underline{2}$, go below all the points between $\underline{2}$ and $\underline{3}$, and end at $\underline{3}$. Then $\a_{\psi(Q)}(\S_2\g_{\pi^{-1}(2)})= \s_2(\a_3) = \a_2+\a_3$. Also there exists a sufficiently small neighborhood around $\underline{3}$ so that $\s_2\g_{\pi^{-1}(2)}$ is homotopy equivalent to $\g_{\pi^{-1}(3)}$ in that neighborhood also the both curves do not intersect $\r_{\pi^{-1}(3)}$; see Fig.~\ref{S_2g_5_2}. Thus $\S_{3}\S_2\g_{\pi^{-1}(2)}$ loops around $\underline{3}$ and going under all the other points and end at $\underline{4}$.

\begin{Prop}\label{prop_a_strict_inc} Let $Q$ be an acyclic quiver of type $A$ and $\pi \in P_Q \cap U_n$. Let $\a = \sum_{i=\ell}^m \a_i$ be a positive root of type $A$ and let $\g = \S_{m-1}\S_{m-2}\cdots \S_\ell \g_{\pi^{-1}(\ell)}$. Then $\g \in \G_{\pi,s}$ and $\a_\pi(\g) = \a$. Furthermore,
\[\{\a_\pi(\g)\,|\,\g\in\G_{\pi,s}\} = \D_Q^+.\]
\end{Prop}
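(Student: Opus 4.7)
The plan is to induct on $m-\ell$. Setting $\g^{(k)}:=\S_{k-1}\cdots\S_\ell\,\g_{\pi^{-1}(\ell)}$ for $\ell\le k\le m$ (with $\g^{(\ell)}:=\g_{\pi^{-1}(\ell)}$), the inductive claim I would prove is that $\g^{(k)}$ is a non-self-crossing admissible curve whose $p$-endpoint is $\underline{k}$ and whose word is
\[I_{\g^{(k)}} \;=\; \pi^{-1}(k)\cdot\pi^{-1}(k-1)\cdot\pi^{-1}(k-2)\cdots\pi^{-1}(\ell),\]
that $\a_\pi(\g^{(k)})=\sum_{i=\ell}^{k}\a_i$, and that $\g^{(k)}$ is strictly increasing with respect to $\pi$. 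The base case $k=\ell$ is immediate, since $\g_{\pi^{-1}(\ell)}$ is the straight-line curve from $p_{\pi^{-1}(\ell)}$ to $b$ introduced before Figure \ref{gamma_3} and crosses no rays.

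For the inductive step, write $a=\pi^{-1}(k)$ and $a'=\pi^{-1}(k+1)$, and treat the two cases $a<a'$ and $a>a'$ symmetrically; assume $a<a'$, so $\S_k=\s_{[a+1,a']}^{-1}\s_a$. The unimodal hypothesis on $\pi$ forces $\pi(b)>k+1$ for every intermediate $a<b<a'$, which has two consequences: none of the rays $\r_{a+1},\ldots,\r_{a'-1}$ appears in $I_{\g^{(k)}}$ (whose ray letters are $\pi^{-1}(\ell),\ldots,\pi^{-1}(k-1)$), and the intermediate marked points $p_{a+1},\ldots,p_{a'-1}$ are untouched by $\g^{(k)}$ and so may be freely passed under by the new piece of curve. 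Unpacking the braid $\S_k$ shows that, up to isotopy, it attaches a new initial segment to $\g^{(k)}$ that starts at $p_{a'}=\underline{k+1}$, passes under the intermediate marked points (hence without crossing their rays), goes over $p_a=\underline{k}$ (crossing $\r_a$ exactly once), and joins $\g^{(k)}$ at its former $p$-end. The result is non-self-crossing admissible with
\[I_{\g^{(k+1)}} \;=\; a'\cdot a\cdot\big(I_{\g^{(k)}}\text{ with its starting letter removed}\big) \;=\; \pi^{-1}(k+1)\,\pi^{-1}(k)\,\pi^{-1}(k-1)\cdots\pi^{-1}(\ell).\]

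Reading the associated root off this word gives $\a_\pi(\g^{(k+1)})=s_\ell s_{\ell+1}\cdots s_{k-1}s_k\,\a_{k+1}$, and the standard type-$A$ identity $s_j s_{j+1}\cdots s_k\,\a_{k+1}=\a_j+\a_{j+1}+\cdots+\a_{k+1}$ simplifies this to $\sum_{i=\ell}^{k+1}\a_i$. Strict increase at each reflection in the product is immediate: the intermediate partial sum $s_{j+1}\cdots s_k\,\a_{k+1}=\sum_{i=j+1}^{k+1}\a_i$ has $\a_j$-coefficient $0$, which $s_j$ replaces by $c_{j-1}+c_{j+1}-c_j=0+1-0=1$, a strict $<_D$-increase. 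Taking $k=m$ yields $\g\in\G_{\pi,s}$ with $\a_\pi(\g)=\a$, so $\D_Q^+\subseteq\{\a_\pi(\g):\g\in\G_{\pi,s}\}$; the reverse inclusion is Remark \ref{rmk_associatedd_roots_subset_of_real_schur_roots} specialised to $\G_{\pi,s}\subseteq\G$. I expect the main obstacle to be the geometric verification that $\S_k$ indeed produces exactly the single new crossing $\r_a$ (with no spurious crossings of the intermediate rays and no further modification of $I_{\g^{(k)}}$), which is precisely where unimodality of $\pi$ enters.
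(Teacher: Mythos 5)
Your proposal is correct and follows essentially the same route as the paper: induction on the number of $\S_k$'s applied, with unimodality of $\pi$ used to show that the curve built so far misses the rays between $\underline{k}$ and $\underline{k+1}$, so that each $\S_k$ contributes exactly one new crossing, followed by the same type-$A$ reflection computation. The "main obstacle" you flag at the end is handled in the paper by exactly the contradiction you sketch (if $\s_a\g'$ failed to cross $\r_a$, then $\g'$ would have to cross $\r_{a+1}$, forcing $\ell\le\pi(a+1)\le m-2$, which unimodality forbids), so nothing essential is missing.
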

See Example~\ref{ex_of_a_n_lemma} for an illustration of this Proposition.

\begin{figure}[h]\centering
\begin{subfigure}[b]{.4\textwidth}
 \centering
\includegraphics[trim =20mm 70mm 60mm 195mm, clip, width = \linewidth ]{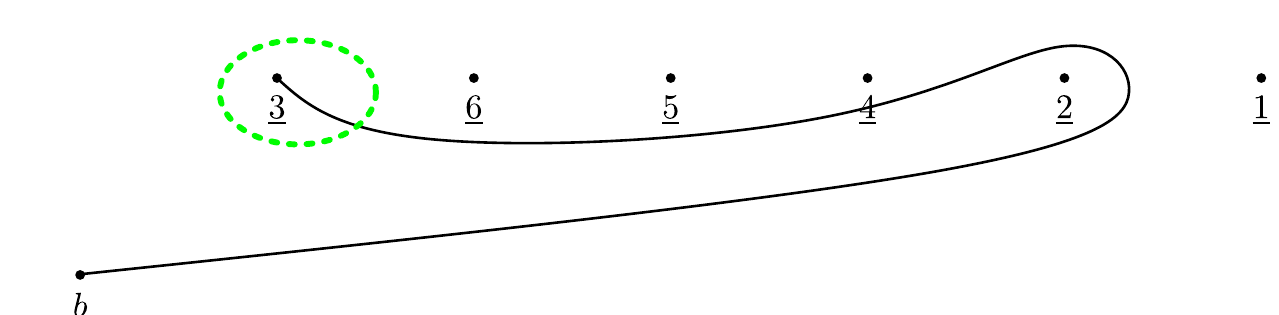}
\caption{$\S_2\g_5$}
\end{subfigure}
\begin{subfigure}[b]{.4\textwidth}
 \centering
\includegraphics[trim =20mm 20mm 20mm 240mm, clip, width = \linewidth ]{ex1_a_n_2.pdf}
\caption{$\g_3$}
\end{subfigure}
\caption{In the neighborhood around $\underline{3}$, $\S_2\g_5$ homotopy equivalent to $\g_3$.}\label{S_2g_5_2}
\end{figure}

\begin{proof}Let $\a = \a_{\ell} +\a_{\ell+1} + \cdots + \a_m$ and $\g = \S_{m-1}\cdots \S_\ell \g_{\pi^{-1}(\ell)}$. We claim that $\pi(I_\g) = \r_m\r_{m-1}\cdots \r_\ell$. To prove this, we induct on $m-\ell$. If $m-\ell = 0$, then $\g = \g_{\pi^{-1}(\ell)}$. Thus $\pi(I_\g) = \r_\ell$.

Let $\g' = \S_{m-2}\cdots \S_\ell\g_{\pi^{-1}(\ell)}$. By the induction hypothesis, $\pi(I_{\g'}) = \r_{m-1}\r_{m-2}\cdots\r_\ell$. There are two possible cases: $\pi^{-1}(m)<\pi^{-1}(m-1)$ and $\pi^{-1}(m)>\pi^{-1}(m-1)$. As they are symmetric, we can assume that $\pi^{-1}(m)>\pi^{-1}(m-1)$ without loss of generality.

Let $a = \pi^{-1}(m-1)$ and $b = \pi^{-1}(m)$. Then $\g = \S_{m-1}\g' =\s_{b-1}^{-1} \s_{b-2}^{-1}\cdots \s_{a+1}^{-1}\s_{a}\g'$. By the induction hypothesis, $\g'(0) = \underline{m-1}$, which means that $\s_a(\g')(0) = \underline{\pi(a+1)}$. There are two possibilities for $\pi(I_{\s_a(\g')})$: $\r_{\pi(a+1)}\r_{m-1}\r_{m-2}\ldots\r_\ell$ or $\r_{\pi(a+1)}\r_{m-2}\cdots\r_\ell$. Refer to Fig.~\ref{s_a_g'} for illustrations of these two cases. We claim that the second case is not possible. By way of contradiction, assume $\pi(I_{\s_a(\g')})=\r_{\pi(a+1)}\r_{m-2}\cdots \r_\ell$. Apply $\s_a^{-1}$ to $\s_a(\g')$ to obtain $\g'$. Note that in this case $\g'$ crosses $\r_{a+1}$. Since $\g'$ crosses $\r_{a+1}$, $\ell\leq \pi(a+1) \leq m-2$. However, $\pi$ is unimodal and $\pi(a) = m-1$ and $\pi(b) = m$. Then $a$ is part of an increasing sequence of this unimodal permutation $\pi$. Thus, $\pi(a+1) > \pi(a)=m-1$, which contradicts that $\pi(a+1) \leq m-2$. Thus the second case does not happen and $\s_a(\g')$ crosses $\r_{a}$ transversally.

\begin{figure}[h]\centering
\begin{subfigure}[b]{.45\textwidth}
 \centering
	\includegraphics[trim = 30mm 240mm 55mm 20mm, clip, width = \linewidth ]{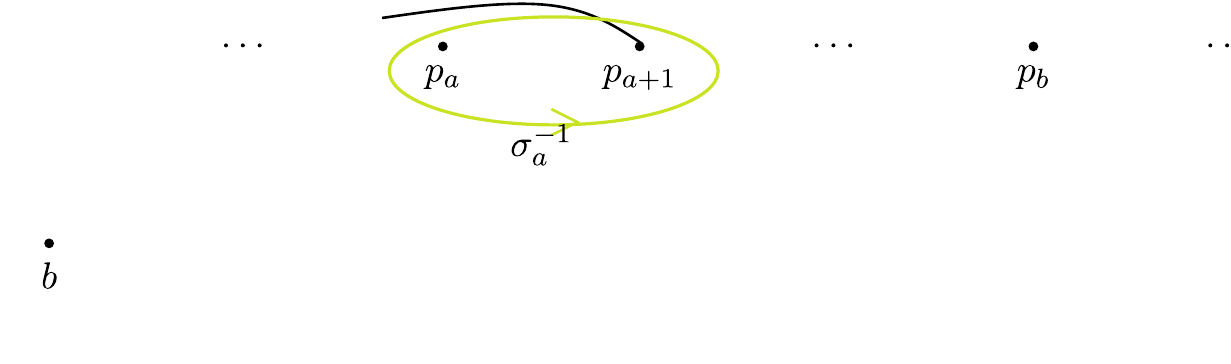}
	\caption{Case 1: $\s_a(\g')$ crosses $\r_{a}$}
 \label{S_a_y_m-1}
\end{subfigure}%
\begin{subfigure}[b]{.45\textwidth}
 \centering
\includegraphics[trim =30mm 187mm 55mm 73mm, clip, width = \linewidth ]{ex_a_n_g.pdf}
\caption{Case 2: $\s_a(\g')$ does not cross $\r_{a}$}
 \label{s_a_n_m-1}
\end{subfigure}
\begin{subfigure}[b]{.5\textwidth}
 \centering
	\includegraphics[trim = 30mm 139mm 55mm 110mm, clip, width = \linewidth ]{ex_a_n_g.pdf}
	\caption{Case 1: $\s_a^{-1}(\s_a(\g'))$ }
 \label{S_a_y_m-1_a}
\end{subfigure}%
\begin{subfigure}[b]{.5\textwidth}
 \centering
\includegraphics[trim =30mm 90mm 55mm 160mm, clip, width = \linewidth ]{ex_a_n_g.pdf}
\caption{Case 2: $\s_a^{-1}(\s_a(\g'))$}
 \label{s_a_n_m-1_a}
\end{subfigure}

\caption{Two cases.}\label{s_a_g'}
\end{figure}

Now consider $\s_{a+1}^{-1}\s_{a}(\g')$. The curve goes below $p_{a+1}$ since $\s_{a}(\g')$ crosses $\r_{a}$. Thus the curve does not cross $\r_{a+1}$. Also $\s_{a+1}^{-1}\s_{a}(\g')$ ends at $p_{a+2}$. Similarly, $\s_{b-1}^{-1}\s_{b-2}^{-1}\cdots \s_{a+1}^{-1}\s_a(\g')$ does not cross $\r_i$ for all $a<i\leq b$. Thus $\g$ ends at $p_b = \underline{m}$ and crosses $\r_{\pi^{-1}(m-1)}$. Therefore $\pi(I_\g) = \r_m\r_{m-1} \cdots \r_\ell$.

For a quiver in $\cA_n$,
\[s_i(\a_j)=\begin{cases}
 \a_j+\a_i & \text{if } j = i-1, i+1,\\
 \a_j & \text{otherwise}.
\end{cases}\]
Thus, $s_{m-1}\a_m = \a_{m-1}+\a_m$ and $s_{m-2}(\a_{m-1}+\a_m)=s_{m-2}(\a_{m-1})+s_{m-2}(\a_m) = \a_{m-2}+\a_{m-1}+\a_m.$ By continuing this process, we can see that $s_j s_{j-1}\cdots s_{m-1}\a_m = \sum_{k=j}^m \a_k$. Note that $\sum_{k=j-1}^m \a_k <_D \sum_{k=j}^m \a_k$ for all $\ell \leq j < m$. Therefore $\g \in \G_{\pi,s}$ and $\a_\pi(\g) = \a$.

Furthermore, $\{\a_\pi(\g)\,|\,\g\in\G_{\pi,s}\} \subseteq \D_Q^+$ as mentioned before. Since $\a_\pi(\g) = \a$ for any positive root $\a$ of type $A$, $\D_Q^+ = \{\a_\pi(\g)\,|\,\g\in\G_{\pi,s}\}$.
\end{proof}

\begin{Exa}
\label{ex_of_a_n_lemma}

Consider the following quiver $Q$:
\begin{center}
\begin{tikzpicture}
\node[shape=circle,draw=black, minimum size = .4cm, inner sep = 4pt, outer sep = 0.5pt,](6) at (0,0) {$6$};
\node[shape=circle,draw=black, minimum size = .4cm, inner sep = 4pt, outer sep = 0.5pt,](5) at (2,0) {$5$};
\node[shape=circle,draw=black, minimum size = .4cm, inner sep = 4pt, outer sep = 0.5pt,](4) at (4,0) {$4$};
\node[shape=circle,draw=black, minimum size = .4cm, inner sep = 4pt, outer sep = 0.5pt,](3) at (6,0) {$3$};
\node[shape=circle,draw=black, minimum size = .4cm, inner sep = 4pt, outer sep = 0.5pt,](2) at (8,0) {$2$};
\node[shape=circle,draw=black, minimum size = .4cm, inner sep = 4pt, outer sep = 0.5pt,](1) at (10,0){$1$};
\path[->](1) edge (2) ;
\path[->](2) edge (3);
\path[<-](3) edge (4);
\path[->](4) edge (5);
\path[->](5) edge (6);
\end{tikzpicture}
\end{center}
Then $\pi_Q = \left(\begin{smallmatrix}
1 & 2 & 3 & 4 &5 &6\\
1&2&4&5&6&3 \\
\end{smallmatrix}\right) $ and $\g = \S_{5}\S_4\S_3\S_{2} \S_1\g_{1}$ is shown below.
\begin{center}
	\includegraphics[trim = 30mm 40mm 45mm 223mm, clip, width = .8 \linewidth ]{ex_a_n_g.pdf}
\end{center}

Thus $\pi_Q(I_\g) = \r_6\r_5\r_4\r_3\r_2\r_1$ and $\a_{\pi_Q}(\g) = s_1s_2s_3s_4s_5\a_6 = \sum_{i=1}^6 \a_i.$ Note that this curve is not given by $c_\pi^k\theta_i$ for any $i$ and $k$. Thus Proposition \ref{prop_Bourbaki} does not cover this curve.
\end{Exa}

\subsection{Non-decreasing curve}
Not every permutation in $P_Q$ has a strictly increasing non-self-crossing admissible curve for every positive root. However, for any permutation in $P_Q$, there is a non-decreasing non-self-crossing admissible curve for every positive root.

\begin{Prop} \label{prop_a_non_dec}
Let $Q \in \cA_n$. For any $\pi \in P_Q$, $\{\a_\pi(\g)\,|\,\g \in \G_{\pi,nd}\} = \D^+_Q$.
\end{Prop}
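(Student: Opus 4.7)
The plan is to reduce the proposition to the full-support case via Lemma 3.4, then construct explicitly a strictly increasing non-self-crossing admissible curve for the full root for any $\pi\in P_Q$. The inclusion $\{\a_\pi(\g)\mid \g\in\G_{\pi,nd}\}\subseteq \D_Q^+$ is immediate from Remark 2.10, so only the reverse inclusion requires work.

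Given $\pi\in P_Q$ and a positive root $\a=\a_\ell+\a_{\ell+1}+\cdots+\a_m\in\D_Q^+$, set $V=\{\ell,\ldots,m\}$ and $Q'=Q|_V$, which is a connected subquiver in $\cA_{m-\ell+1}$, and let $\pi'=\varphi(\pi)\in P_{Q'}$. Then $\pi\in\varphi^{-1}(\pi')$, which is nonempty by Remark 3.5. Since $\a$ is the full-support root of $Q'$, Lemma 3.4 reduces the claim to the following reduced statement: for any connected $Q\in\cA_n$ and any $\pi\in P_Q$, the full root $\a_1+\a_2+\cdots+\a_n$ lies in $\{\a_\pi(\g)\mid \g\in\G_{\pi,nd}\}$.

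To prove the reduced statement, I would use the visit order $v_i=i+1$ for $i=0,\ldots,n-1$. Iterating the type-$A$ identity $s_{i+1}(\a_1+\cdots+\a_i)=\a_1+\cdots+\a_{i+1}$ shows that the partial roots form the strictly $\leq_D$-increasing chain $\a_1<_D\a_1+\a_2<_D\cdots<_D\a_1+\cdots+\a_n$, so any curve realizing the crossing sequence $k_i=\pi^{-1}(i+1)$ automatically lies in $\G_{\pi,s}\subseteq \G_{\pi,nd}$ and has associated root $\a_1+\cdots+\a_n$. Since the $k_i$ are distinct, it remains to exhibit a non-self-crossing admissible curve $\g$ from $p_{k_0}$ to $b$ with $I_\g=k_0k_1\cdots k_{n-1}$. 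I would construct $\g$ as a zigzag: pick a small $t\in(0,1/2)$ and strictly decreasing sequences $1>\e_0>\e_1>\cdots>0$ and $h_1>h_2>\cdots>1$; route $\g$ along horizontal transit segments at levels $y=\e_i$ (which lie below every marked point and ray) between short upper excursions $(k_i+t,\e_{i-1})\to(k_i+t,h_i)\to(k_i-t,h_i)\to(k_i-t,\e_i)$ that cross $\r_{k_i}$ transversely, and then descend from $(k_{n-1}-t,\e_{n-1})$ to $b$. Because $t<1/2$, the vertical legs sit at non-integer $x$-coordinates and each short horizontal at height $h_i$ crosses exactly one ray. Lifting the resulting $\g$ through Lemma 3.4 produces the desired curve in the original $Q$.

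The main obstacle is verifying non-self-crossing rigorously: the vertical descent at $x=k_i-t$ passes through every lower level $\e_{i-1},\e_{i-2},\ldots$ and could in principle cross an earlier transit horizontal whose $x$-interval contains $k_i-t$. One resolution is an \emph{onion}-style refinement in which each transit lives in a fresh $y$-strip disjoint from previous vertical legs; an alternative is a purely algebraic construction of $\g$ as a product $\S_{n-1}\cdots\S_1\g_{\pi^{-1}(1)}$ of generalized braid operators $\S_i$ analogous to those in Proposition~\ref{prop_a_strict_inc} but defined for arbitrary $\pi\in P_Q$, with cases on the sign of $\pi^{-1}(i+1)-\pi^{-1}(i)$, checking inductively that each $\S_i$ preserves non-self-crossing.
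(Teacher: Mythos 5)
Your first step---reducing to the highest root of the full connected subquiver on the support of $\a$ via Lemma~\ref{lem_subquiver} and Remark~\ref{rmk_subquiver}---is the same as the paper's, and the algebra of the chain $\a_1<_D\a_1+\a_2<_D\cdots<_D\a_1+\cdots+\a_n$ is correct. The fatal gap is the step ``it remains to exhibit a non-self-crossing admissible curve $\g$ with $I_\g=k_0k_1\cdots k_{n-1}$'' for $k_i=\pi^{-1}(i+1)$: for general $\pi\in P_Q$ no such curve exists, so no routing can produce one. Take $Q\colon 1\to 2\leftarrow 3$ and $\pi=\left(\begin{smallmatrix}1&2&3\\ 1&3&2\end{smallmatrix}\right)\in P_Q$. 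Your word requires a curve starting at $p_1$ that crosses $\r_3$, then $\r_2$, then runs to $b$. Let $\g_0$ be the initial piece of such a curve, from $p_1$ to its unique intersection point $q$ with $\r_3$; by hypothesis $\g_0$ meets no ray before $q$. Then the embedded arc formed by $\r_1$, followed by $\g_0$, followed by the part of $\r_3$ above $q$, joins two boundary points of the disc and separates $\r_2$ (which it does not meet) from $b$. The remainder of the curve may not cross $\r_1$, $\g_0$, or $\r_3$ again, so it stays on one side of this arc, yet it must both meet $\r_2$ and end at $b$---a contradiction. The obstruction is topological, so neither of your proposed repairs helps: the ``onion'' refinement only re-routes the same unrealizable word, and the generalized-$\S_i$ construction begs the question, since the proof in Proposition~\ref{prop_a_strict_inc} that those braid words give embedded curves with the prescribed crossing sequence uses unimodality of $\pi$ in an essential way.

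The paper avoids prescribing any crossing order. It inducts on $|Q_0|$ using the Coxeter element: for the highest root $\a$ one has $c_\pi\a<_D\a$ for every $\pi\in P_Q$; if $c_\pi\a$ is positive it is the highest root of a proper full subquiver, so the inductive hypothesis together with Lemma~\ref{lem_subquiver} and Lemma~\ref{lem_cox_transf} yields a non-decreasing curve for $\a$ by appending a full counterclockwise wrap, while if $c_\pi\a$ is negative then $c_\pi\a=-\a_{\pi(1)}$ and an explicit curve is exhibited. The curves so obtained are in general only non-decreasing and have crossing words longer than $n$, which is exactly why the proposition asserts membership in $\G_{\pi,nd}$ rather than $\G_{\pi,s}$ for arbitrary $\pi$. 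In the example above $\a_1+\a_2+\a_3$ is realized, for instance, by the curve from $p_3$ crossing $\r_2$ then $\r_1$, which accumulates $\a_2$, then $\a_3$, then $\a_1$; if you insist on a direct construction you must show that \emph{some} realizable accumulation order exists for every $\pi\in P_Q$, which is a genuinely different and harder argument than the one you sketched.
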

\begin{proof}
As $\{\a_\pi(\g)\,|\,\g \in \G_{\pi,nd}\} \subseteq \D^+_Q$ for any $Q$ and $\pi \in P_Q$, we just need to show the other inclusion. The positive roots of type $A$ are of the form $\sum_{i=k}^\ell \a_i$, which is the highest root of the full subquiver induced by the vertices $k,\ldots, \ell.$ By Lemma \ref{lem_subquiver} and Remark~\ref{rmk_subquiver}, if $Q'$ is a full subquiver of $Q$, then $\{\a_{\pi'}(\g)\,|\,\g \in \G_{\pi',nd}\} \subseteq \{\a_\pi(\g)\,|\,\g \in \G_{\pi,nd}\}$ for $\pi' \in P_{Q'}$ and $\pi \in \varphi^{-1}(\pi')$. Thus it suffices to show that the highest root is in $\{\a_\pi(\g)\,|\,\g \in \G_{\pi,nd}\}$ for any $Q \in \cA_n$ and $\pi \in P_Q$.

Let us proceed by induction on $|Q_0|$. If $|Q_0|=1$, it is trivial. Assume that for a quiver in~$\cA_n$ with less than $n$ vertices, the highest root is in $\{\a_\pi(\g)\,|\,\g \in \G_{\pi,nd}\}$ for any $\pi$. Let $Q \in \cA_n$ be a quiver with $n$ vertices and $\a$ be the highest root. For any $\pi \in P_Q$, $c_\pi\a <_D \a$. If $c_\pi\a$ is positive, then it is of the form $\sum_{i=k}^\ell \a_k$ where $\ell-k < n-1$. So $c_\pi\a$ is the highest root of a full subquiver of~$Q$ and by the induction hypothesis, $c_\pi\a$ is in $\{\a_{\pi'}(\g)\,|\,\g \in \G_{\pi',nd}\}$ where $\pi' = \varphi(\pi)$. Thus $c_\pi\a \in \{\a_{\pi}(\g)\,|\,\g \in \G_{\pi,nd}\}$ by Lemma~\ref{lem_subquiver} and Remark~\ref{rmk_subquiver} and $\a \in \{\a_\pi(\g)\,|\,\g \in \G_{\pi,nd}\}$ by Lemma~\ref{lem_cox_transf}.

If $c_\pi\a$ is negative, then $c_\pi\a = -\a_{\pi(1)}$ as $\a$ is the highest root of $Q$. Thus $\a \!=\! s_{\pi(n)}\!\cdots s_{\pi(2)}\a_{\pi(1)}$. In this case, let $\g$ be the curve shown below
\begin{center}
\includegraphics[trim = 15mm 253mm 100mm 15mm, clip, width = .5 \linewidth ]{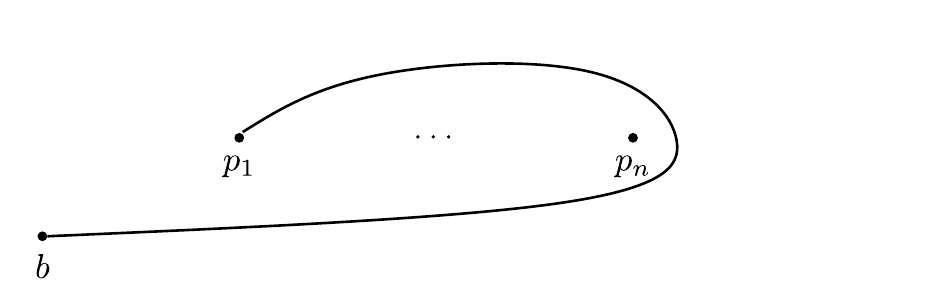}
\end{center}
Then $\a_\pi(\g) = s_{\pi(n)}\cdots s_{\pi(2)}\a_2 = \a$ and $\g \in \G_{\pi,s} \subset \G_{\pi,nd}$.
\end{proof}

\section[Type D]{Type $\boldsymbol{D}$}\label{type_d}
 In this section, we prove that for an acyclic quiver of type $D$ and any permutation respecting the orientations, there exists a non-decreasing non-self-crossing admissible curve for every root. Let~$\mathcal{D}_n$ be the set of type~$D$ quivers with~$n$ vertices, i.e., quivers whose underlying graph is
\begin{center}
\begin{tikzpicture}
\node(1)[shape=circle,draw=black, minimum size = .4cm, inner sep = 4pt, outer sep = 0.5pt,] at (0,0) {$n$};
\node(2)[shape=circle,draw=black, minimum size = .4cm, inner sep = .5pt, outer sep = 0.5pt,] at (2,0) {{\scriptsize $n-1$}};
\node(3)[shape=circle,draw=black, minimum size = .4cm, inner sep = .5pt, outer sep = 0.5pt,] at (0,-1) {{\scriptsize $n-2$}};
\node(4)[shape=circle,draw=black, minimum size = .4cm, inner sep = .5pt, outer sep = 0.5pt,] at (-2,0) {{\scriptsize $n-3$}};
\node(5)[shape=circle,draw=black, minimum size = .4cm, inner sep = .5pt, outer sep = 0.5pt,] at (-4.2,0) {{\scriptsize $n-4$}};
\node(6) at (-6.2,0) {$\ldots$};
\node(7)[shape=circle,draw=black, minimum size = .4cm, inner sep = 4pt, outer sep = 0.5pt,] at (-7.5,0) {$2$};
\node(8)[shape=circle,draw=black, minimum size = .4cm, inner sep = 4pt, outer sep = 0.5pt,] at (-9,0) {$1$};
\path[-](1) edge (2);
\path[-](1) edge (3);
\path[-](1) edge (4);
\path[-](4) edge (5);
\path[-](5) edge (6);
\path[-](6) edge (7);
\path[-](7) edge (8);
\end{tikzpicture}
\end{center}

It is known that the positive roots of a quiver of type $D$ are either type $A$ or one of the following:
\[\begin{smallmatrix}
0 &\cdots &0 & 1&\cdots & 1&1 \\
&& & &&1&
\end{smallmatrix},\,\, \begin{smallmatrix}
0 &\cdots &0 & 1&\cdots & 1 &2&\cdots & 2&1 \\
&&&&& & &&1&
\end{smallmatrix}\]

Unlike the type $A$ case, if $Q$ is an acyclic quiver of type $D$, not all positive roots are the associated roots of strictly increasing non-self-crossing curves.
\begin{Exa}
Consider the quiver $Q$ below. Then $P_Q = \left\{\left(\begin{smallmatrix}
1&2&3&4&5\\
4&1&2&5&3
\end{smallmatrix}\right),\left(\begin{smallmatrix}
1&2&3&4&5\\
1&4&2&5&3
\end{smallmatrix}\right),\left(\begin{smallmatrix}
1&2&3&4&5\\
1&2&4&5&3
\end{smallmatrix}\right)\right\}$.
\begin{center}
\begin{tikzpicture}[scale=.8]
\node(1)[shape=circle,draw=black, minimum size = .4cm, inner sep = 4pt, outer sep = 0.5pt,]at (0,0) {$5$};
\node(2)[shape=circle,draw=black, minimum size = .4cm, inner sep = 4pt, outer sep = 0.5pt,] at (2,0) {4};
\node(3)[shape=circle,draw=black, minimum size = .4cm, inner sep = 4pt, outer sep = 0.5pt,] at (0,-1.5) {3};
\node(4)[shape=circle,draw=black, minimum size = .4cm, inner sep = 4pt, outer sep = 0.5pt,] at (-2,0) {2};
\node(5)[shape=circle,draw=black, minimum size = .4cm, inner sep = 4pt, outer sep = 0.5pt,] at (-4.2,0) {1};
\path[<-](1) edge (2);
\path[->](1) edge (3);
\path[<-](1) edge (4);
\path[<-](4) edge (5);
\end{tikzpicture}
\end{center}
Let $\a = \begin{smallmatrix}
1&2&2&1\\
&&1&
\end{smallmatrix}$. If $\a = \a_\pi(\g)$ for some $\pi \in P_Q$ and $\g \in \G_{\pi,s}$, then $\pi(I_\g) = \r_{k_1}\r_{k_2}\r_{k_3}\r_{k_4}\r_{k_5}\r_5\r_2$ where $k_i$'s are distinct and the full subquiver induced by $\{k_1,\ldots, k_i\}$ is connected for all $i \leq 5$. However, this is not possible for any $\pi \in P_Q$.
\end{Exa}

If $\a = \sum_{i=1}^n \b_i\a_i$ and $\b_i = 0$ for some $i$, then it suffices to view $\a$ as a root of a full subquiver by Lemma~\ref{lem_subquiver} and Remark~\ref{rmk_subquiver}. Thus we only consider the roots of the form:
\[\begin{smallmatrix}
1&\cdots & 1&1 \\
 &&1&
\end{smallmatrix},\,\, \begin{smallmatrix}
1&\cdots & 1 &2&\cdots & 2&1 \\
&& & &&1&
\end{smallmatrix}\]

\begin{Lem}\label{lem_reducing_root}
Let $Q$ be an acyclic quiver of type $ADE$ with $n$ vertices. Let $\a$ be a positive root of $Q$ with a unique index $k\in [n]$ such that $s_k(\a) >_D\a$. If there is a vertex $j$ that is adjacent to the vertex $k$ such that $s_j(\a) <_D \a$, then $c_\pi\a<_D\a$ or $c_\pi^{-1}\a<_D\a$ for any $\pi \in P_Q$.
\end{Lem}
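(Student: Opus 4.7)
My plan is to split into two cases based on the orientation in $Q$ of the edge between $k$ and the distinguished neighbor $j$: by hypothesis, $j$ is either an in-neighbor or an out-neighbor of $k$ in $Q$. In the first case (Case~A) I aim to show $c_\pi^{-1}\a\leq_D\a$; in the second (Case~B) that $c_\pi\a\leq_D\a$. The two cases are entirely symmetric, so I describe only Case~A.

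For $\pi\in P_Q$, expanding $c_\pi^{-1}\a=s_{\pi(n)}\cdots s_{\pi(1)}\a$ step by step, and using that each simple reflection $s_i$ changes only the $\a_i$-coefficient, I first derive a recursion for the coefficient deviation $d_r:=(\text{coefficient of }\a_r\text{ in }c_\pi^{-1}\a)-\b_r$. Because $\pi\in P_Q$, for adjacent vertices $\ell,r$ the reflection $s_\ell$ is applied before $s_r$ in $c_\pi^{-1}$ precisely when $\ell\to r$ in $Q_1$. Tracking the coefficient of $\a_r$ at the moment $s_r$ is applied yields
\[ d_r = -\langle\a,\a_r^\vee\rangle + \sum_{\ell\to r\in Q_1}d_\ell. \]
Since the underlying graph of an ADE quiver is a tree, this recursion unrolls to the closed form
\[ d_r = -\sum_{s\leq_Q r}\langle\a,\a_s^\vee\rangle, \]
where $s\leq_Q r$ means there is a directed path from $s$ to $r$ in $Q$ (with $r\leq_Q r$ by convention).

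Next I show $d_r\leq 0$ for every $r$. If $k\not\leq_Q r$, then every $s\leq_Q r$ satisfies $\langle\a,\a_s^\vee\rangle\geq 0$ by the uniqueness of $k$, so $d_r\leq 0$ trivially. If $k\leq_Q r$, then the arrow $j\to k$ combined with $k\leq_Q r$ yields $j\leq_Q r$; thus the sum defining $d_r$ contains both the negative term $\langle\a,\a_k^\vee\rangle<0$ and the positive term $\langle\a,\a_j^\vee\rangle>0$, with all other terms $\geq 0$. Hence establishing $d_r\leq 0$ reduces to the single inequality $\langle\a,\a_j^\vee\rangle+\langle\a,\a_k^\vee\rangle\geq 0$.

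This final inequality is the crux of the argument, and it is where the simply-laced nature of ADE enters essentially. For any positive root $\a$ and simple root $\a_i$ in a simply-laced root system, $\langle\a,\a_i^\vee\rangle=(\a,\a_i)\in\{-2,-1,0,1,2\}$ by Cauchy--Schwarz, and $(\a,\a_i)=-2$ would force $\a=-\a_i$, contradicting positivity of $\a$. Hence $\langle\a,\a_k^\vee\rangle\geq -1$, which together with $\langle\a,\a_k^\vee\rangle<0$ forces $\langle\a,\a_k^\vee\rangle=-1$. Since $\langle\a,\a_j^\vee\rangle$ is a positive integer it is at least $1$, yielding the desired inequality. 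Strictness $c_\pi^{-1}\a<_D\a$ follows because $d_j\leq -\langle\a,\a_j^\vee\rangle\leq -1<0$ (noting $k\not\leq_Q j$ by acyclicity of $Q$). Case~B is obtained by interchanging the roles of $c_\pi$ with $c_\pi^{-1}$ and in-neighbors with out-neighbors throughout, the same ADE-specific inequality carrying the argument.
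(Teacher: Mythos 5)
Your proof is correct, and its engine is the same as the paper's: because $\pi\in P_Q$, the arrow between $j$ and $k$ forces $s_j$ to act before $s_k$ in exactly one of $c_\pi$, $c_\pi^{-1}$, and the unit decrease at $j$ (from $\langle\a,\a_j^\vee\rangle\geq 1$) offsets the unit increase at $k$ (from $\langle\a,\a_k^\vee\rangle=-1$, which both you and the paper extract from the simply-laced/finite-type hypothesis, the paper phrasing it as ``$s_i\a=\a\pm\a_i$ or $\a$''). Where you genuinely differ is in how the remaining $n-2$ reflections are controlled. The paper computes $s_ks_j\a=\a-\a_j$ in one line and then asserts that the other simple reflections, applied in the order dictated by $\pi$, keep everything $\leq_D\a-\a_j$; that assertion is only verified for inputs of the exact form $\a-\a_j$, while the true intermediate vectors accumulate further corrections, so the paper's propagation step is somewhat informal. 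Your recursion $d_r=-\langle\a,\a_r^\vee\rangle+\sum_{\ell\to r}d_\ell$, together with the closed form $d_r=-\sum_{s\leq_Q r}\langle\a,\a_s^\vee\rangle$ (valid because the underlying graph is a tree, so the ancestor sets of distinct in-neighbors are disjoint), tracks every intermediate coefficient exactly and closes that gap; the cost is that the argument leans on the tree structure and on the Cartan integers being $0$ or $-1$ off-diagonal, but both hold in type $ADE$, so nothing is lost. In short: same key inequality and same use of the $P_Q$ ordering, but a different and tighter bookkeeping scheme that makes the decisive step fully rigorous.
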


\begin{proof}
Let $Q$ be an acyclic quiver of type $ADE$. Let $\a = \sum_{i=1}^n \b_i\a_i$ be a positive root of $Q$ such that there exist two adjacent vertices $j$, $k$ such that $s_k\a >_D\a$, $s_j\a<_D$, and $s_i\a \leq_D\a$ for all other $i$. Note that as $\a$ is a positive root of finite type, $s_i\a = \a \pm \a_i$ or $\a$ for any $i$. As~$j$ and~$k$ are adjacent, $s_ks_j(\a) = s_k(\a - \a_j) = s_k(\a) - s_k(\a_j) = \a+\a_k -(\a_j+\a_k) = \a - \a_j$.

If $i \neq k,j$, then $s_i(\a-\a_j) =s_i\a-s_i\a_j \leq_D \a - \a_j$ as $s_i\a \leq_D\a$ and $s_i\a_j \geq_D \a_j$. Thus $s_i(\a-\a_j) \leq_D \a-\a_j$. If $\pi(k) < \pi(j)$, then $c_\pi \a = s_{\pi(1)} \cdots s_{\pi(n)} \a \leq_D s_ks_j\a <_D \a$. If $\pi(k) > \pi(j)$, then $c^{-1}_\pi \a = s_{\pi(n)} \cdots s_{\pi(1)} \a \leq_D s_ks_j\a <_D \a$.
\end{proof}

\begin{Prop}\label{prop_non_dec_curve_d}
Let $Q \in \cD_n$. Then $\{\a_\pi(\g)\,|\,\g\in \G_{\pi,nd}\} = \D_Q^+$ for any $\pi \in P_Q$.
\end{Prop}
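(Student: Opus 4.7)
The inclusion $\{\a_\pi(\g)\mid\g\in\G_{\pi,nd}\}\subseteq\D_Q^+$ is immediate from Remark~\ref{rmk_associatedd_roots_subset_of_real_schur_roots}. For the reverse inclusion, the plan is to induct on $n=|Q_0|$, using Lemma~\ref{lem_subquiver} together with Remark~\ref{rmk_subquiver} to reduce to full-support roots and then handle the finitely many full-support root-shapes case by case.

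The reduction is direct: if $\a\in\D_Q^+$ has some coefficient equal to zero, its (connected) support induces a proper full subquiver $Q'$ which is either of type $A$ or of type $D$ with strictly fewer vertices, so Proposition~\ref{prop_a_non_dec} or the inductive hypothesis produces a non-decreasing curve with associated root $\a$ in $Q'$, and Lemma~\ref{lem_subquiver} lifts it to $Q$ for any $\pi\in P_Q$ restricting to an allowed permutation on $Q'$. The remaining roots are the full-support ones, namely the all-ones root $\sum_{i=1}^n\a_i$, the highest root $\theta$, and the intermediate roots $\a^{(j)}=\sum_{i<j}\a_i+\sum_{i=j}^{n-3}2\a_i+2\a_n+\a_{n-1}+\a_{n-2}$ for $3\le j\le n-3$.

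For the highest root $\theta$ a computation shows $s_k\theta=\theta$ for every $k\ne 2$ and $s_2\theta=\theta-\a_2$, so if $\pi(m)=2$ then $c_\pi\theta=\theta-w\a_2$ with $w=s_{\pi(1)}\cdots s_{\pi(m-1)}$. The subgroup generated by the $s_k$ with $k\ne 2$ preserves the affine subspace $\a_2+\mathrm{span}\{\a_k\mid k\ne 2\}$, so $w\a_2$ has $\a_2$-coefficient exactly $1$; since roots have coefficients all of one sign, $w\a_2$ is a positive real root, and maximality of $\theta$ gives $w\a_2\le_D\theta$ componentwise. Hence $c_\pi\theta$ is a positive root strictly below $\theta$, and the inductive hypothesis applied to $c_\pi\theta$ plus Lemma~\ref{lem_cox_transf} furnish a non-decreasing curve for $\theta$.

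For each intermediate $\a^{(j)}$ and for the all-ones root, Lemma~\ref{lem_reducing_root} applies (unique increasing vertex $k=j-1$ or $k=n$ respectively, with an adjacent decreasing vertex), giving $c_\pi\a<_D\a$ or $c_\pi^{-1}\a<_D\a$; when the reduced root is a positive root, the bound $c_\pi\a\le_D\a-\a_j$ together with the inductive hypothesis and Lemma~\ref{lem_cox_transf} close the case. The most delicate remaining subcase, and what I expect to be the main obstacle, is when the reduction lands on a negative root --- for instance in $D_5$ with $\pi$ the identity and $\a=\sum_{i=1}^5\a_i$ one finds $c_\pi^{-1}\a=-\a_5$ and $c_\pi\a=\theta>_D\a$, so neither direction of Lemma~\ref{lem_cox_transf} is available. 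My plan for this subcase is to construct an explicit non-decreasing, non-self-crossing admissible curve by picking a well-chosen starting marked point and sweeping through the remaining rays in an order (dictated by $\pi$) that makes the partial reflections $s_{j_i}\cdots s_{j_1}\a_a$ strictly $\le_D$-increase and terminate at $\a$, analogous to the explicit construction in the negative case of Proposition~\ref{prop_a_non_dec}. Verifying that such a sweep can always be realized by a non-self-crossing curve in the presence of the branch vertex of $D_n$ --- which forces the curve to navigate around three adjacent rays at once --- requires a case analysis on the orientation of $Q$ near the branch, and is where the type $D$ argument departs most substantially from the type $A$ proof.
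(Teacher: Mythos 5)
Your proposal follows essentially the same route as the paper's proof: reduce to full-support roots via Lemma~\ref{lem_subquiver} and Remark~\ref{rmk_subquiver}, apply Lemma~\ref{lem_reducing_root} to get $c_\pi\a<_D\a$ or $c_\pi^{-1}\a<_D\a$, and descend by induction together with Lemma~\ref{lem_cox_transf}. Your separate dominance argument for the highest root is sound and is in fact a useful supplement, since Lemma~\ref{lem_reducing_root} does not literally apply to $\theta$ (no $s_k$ increases the highest root). The one substantive difference is the subcase you leave open, and there the obstacle you anticipate is not a real one. When the reduction lands on a negative root --- which, by the height count, can only happen for the all-ones root, where $c_\pi\a=-\a_{\pi(1)}$ and hence $\a=s_{\pi(n)}\cdots s_{\pi(2)}\a_{\pi(1)}$ --- the paper simply takes the curve $\g$ with $I_\g=\r_1\r_2\cdots\r_n$: start at the leftmost marked point and sweep rightward, crossing each ray exactly once in positional order, before descending to the basepoint. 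Non-self-crossing is immediate from the picture, because the rays $\r_1,\dots,\r_n$ are parallel vertical rays in $H$ carrying no information about the Dynkin diagram; the branch vertex of $D_n$ imposes no geometric constraint on the curve, so no case analysis on the orientation near the branch is needed. Non-decreasingness is likewise automatic: each simple reflection occurs exactly once in $s_{\pi(n)}\cdots s_{\pi(2)}\a_{\pi(1)}$ and alters only its own coordinate, so the $i$-th partial product is obtained from the $(i-1)$-st by adding $\b_{\pi(i)}\a_{\pi(i)}$, where $\b_{\pi(i)}\geq 0$ is the corresponding coefficient of the (positive) target root $\a$; the chain is therefore $\leq_D$-increasing, indeed strictly increasing here since all coefficients equal $1$. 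With that observation your argument closes and coincides with the paper's.
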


\begin{proof}Let $Q \in \cD_n$ and $\pi$ be any permutation in $P_Q$. It is known that $\{\a_\pi(\g)\,|\,\g\in \G_{\pi,nd}\} \subseteq \D_Q^+$. The positive roots of~$Q$ are type~$A$, $\begin{smallmatrix}
0&\cdots&0&1&\cdots & 1&1 \\
&&& &&1&
\end{smallmatrix}$, or $\begin{smallmatrix}
0&\cdots&0& 1&\cdots & 1 &2&\cdots & 2&1 \\
&&& && & &&1&
\end{smallmatrix}$. By Lemma~\ref{lem_subquiver}, Remark~\ref{rmk_subquiver}, and Proposition~\ref{prop_a_non_dec} all the type $A$ positive roots are in $\{\a_\pi(\g)\,|\,\g\in \G_{\pi,nd}\} $.

If $\a= \begin{smallmatrix}
0&\cdots&0&1&\cdots & 1&1 \\
&&& &&1&
\end{smallmatrix},$ then it suffices to view $\a$ as a root of full subquiver induced by vertices whose corresponding simple roots have non-zero coefficients by Lemma~\ref{lem_subquiver} and Remark~\ref{rmk_subquiver}. So $\a= \begin{smallmatrix}
1&\cdots & 1&1 \\
&&1&
\end{smallmatrix}$ and $s_n\a >_D\a$, $s_{n-1}\a<_D\a$, and $s_i \leq_D\a$ for all other~$i$. Thus by Lemma~\ref{lem_reducing_root}, $c_\pi\a$~or~$c_\pi^{-1}\a$ is smaller than $\a$. Without loss of generality, assume that $c_\pi\a <_D\a$. Moreover $c_\pi\a <_D\a-\a_{n-1}$. If $c_\pi\a$ is positive, $c_\pi\a$~is of type~$A$ as $\a-\a_{n-1}$ is type~$A$. Thus by Proposition~\ref{prop_a_non_dec}, Lemma~\ref{lem_subquiver}, and Remark~\ref{rmk_subquiver}, $c_\pi\a \in \{\a_\pi(\g)\,|\,\g\in\G_{\pi, nd}\}.$ Then $\a \in \{\a_\pi(\g)\,|\,\g\in\G_{\pi, nd}\}$ by Lemma~\ref{lem_cox_transf}. If $c_\pi\a$ is negative, then $c_\pi\a = -\a_{\pi(1)}$ as $h(\a) = n$. Then $\a = s_{\pi(n)}\cdots s_{\pi(2)}\a_2$. Just as we have seen in the proof of Proposition~\ref{prop_a_non_dec}, there is $\g$ such that $\a_\pi(\g) = s_{\pi(n)}\cdots s_{\pi(2)}\a_2 = \a$ and $\g \in \G_{\pi,s} \subset \G_{\pi,nd}$.

Let $\a$ be a root of the form $\begin{smallmatrix}
0&\cdots&0& 1&\cdots & 1 &2&\cdots & 2&1 \\
&&&&& & &&1&
\end{smallmatrix}$. Let us proceed by induction on the height of~$\a$, $h(\a) = \sum_{i=1}^n \b_i$, to show that $\a \in \{\a_\pi(\g)\,|\,\g\in \G_{\pi,nd}\}$. The smallest $h(\a)$ is~5, i.e., $\a = \begin{smallmatrix} 0&\cdots&0&1&2&1\\&&& &1&\\ \end{smallmatrix}$. By Lemma~\ref{lem_subquiver} and Remark~\ref{rmk_subquiver}, it suffices to view $\a$ as the highest root of a quiver in $D_n$ with four vertices. Then $c_\pi\a$ is type $A$ or $\begin{smallmatrix}
1&1&1\\
&1&\\
\end{smallmatrix}$. By Proposition \ref{prop_a_non_dec} and the above argument, $c_\pi\a \in \{\a_\pi(\g)\,|\,\g\in \G_{\pi,nd}\}$ for any $\pi \in P_Q$. Then $\a \in \{\a_\pi(\g)\,|\,\g\in \G_{\pi,nd}\}$ for any $\pi \in P_Q$ by Lemma~\ref{lem_cox_transf}.

If $h(\a) >5,$ assume that for all $\a'$ such that $h(\a') < h(\a)$, $\a' \in\{\a_\pi(\g)\,|\,\g\in\G_{\pi, nd}\}$. Let $\a=\sum_{i=1}^n \b_i\a_i$. By Lemma~\ref{lem_subquiver} and Remark~\ref{rmk_subquiver}, we can assume that $\b_i >0$ for all $i$. Let $j$ be the smallest index so that $\b_j=2$. Note that $s_{j+1} \a = \a + \a_{j+1}$, $s_j \a = \a - \a_j$, and $s_i \a<_D\a$ for all other $i$. Thus by Lemma~\ref{lem_reducing_root}, $c_\pi \a <_D \a$ or $c_\pi^{-1} \a <_D\a$. As $h(\a)>n$ and each simple reflections decrease the coefficient by at most 1, $c_\pi\a$ and $c_\pi^{-1}\a$ cannot be negative. By Lemma~\ref{lem_cox_transf} and the induction hypothesis, $\a \in \{\a_\pi(\g)\,|\,\g \in \G_{\pi,nd}\}.$
\end{proof}

\begin{Exa}
Let $Q$ be the quiver below, $\pi = \left(\begin{smallmatrix} 1&2&3&4&5&6\\ 1&2&3&6&5&4 \end{smallmatrix}\right) \in P_Q$ and $\a = \begin{smallmatrix} 1&1&2&2&1\\ &&&1&\end{smallmatrix}$.
\begin{center}
\begin{tikzpicture}
\node(1)[shape=circle,draw=black, minimum size = .4cm, inner sep = 4pt, outer sep = 0.5pt,]at (0,0) {$6$};
\node(2)[shape=circle,draw=black, minimum size = .4cm, inner sep = 4pt, outer sep = 0.5pt,] at (2,0) {5};
\node(3)[shape=circle,draw=black, minimum size = .4cm, inner sep = 4pt, outer sep = 0.5pt,] at (0,-1.5) {4};
\node(4)[shape=circle,draw=black, minimum size = .4cm, inner sep = 4pt, outer sep = 0.5pt,] at (-2,0) {3};
\node(5)[shape=circle,draw=black, minimum size = .4cm, inner sep = 4pt, outer sep = 0.5pt,] at (-4,0) {2};
\node(6)[shape=circle,draw=black, minimum size = .4cm, inner sep = 4pt, outer sep = 0.5pt,] at (-6,0) {1};
\path[<-](1) edge (2);
\path[->](1) edge (3);
\path[<-](1) edge (4);
\path[<-](4) edge (5);
\path[<-](5) edge (6);
\end{tikzpicture}
\end{center}

Note that $c_\pi\a = \begin{smallmatrix} 0&1&1&2&1\\ &&&1&\end{smallmatrix}$. The set of indices with nonzero coefficient is $\{2,3,4,5,6\}$. Note that $s_2s_3s_6s_5s_4c_\pi\a = \begin{smallmatrix} 0&0&1&1&1\\ &&&1&\end{smallmatrix}$ and $s_3s_6s_5s_4s_2s_3s_6s_5s_4c_\pi\a = \a_3$. Thus let $\g$ be the curve below.
\begin{center}
\includegraphics[trim = 25mm 220mm 25mm 15mm, clip, width = 0.7\linewidth ]{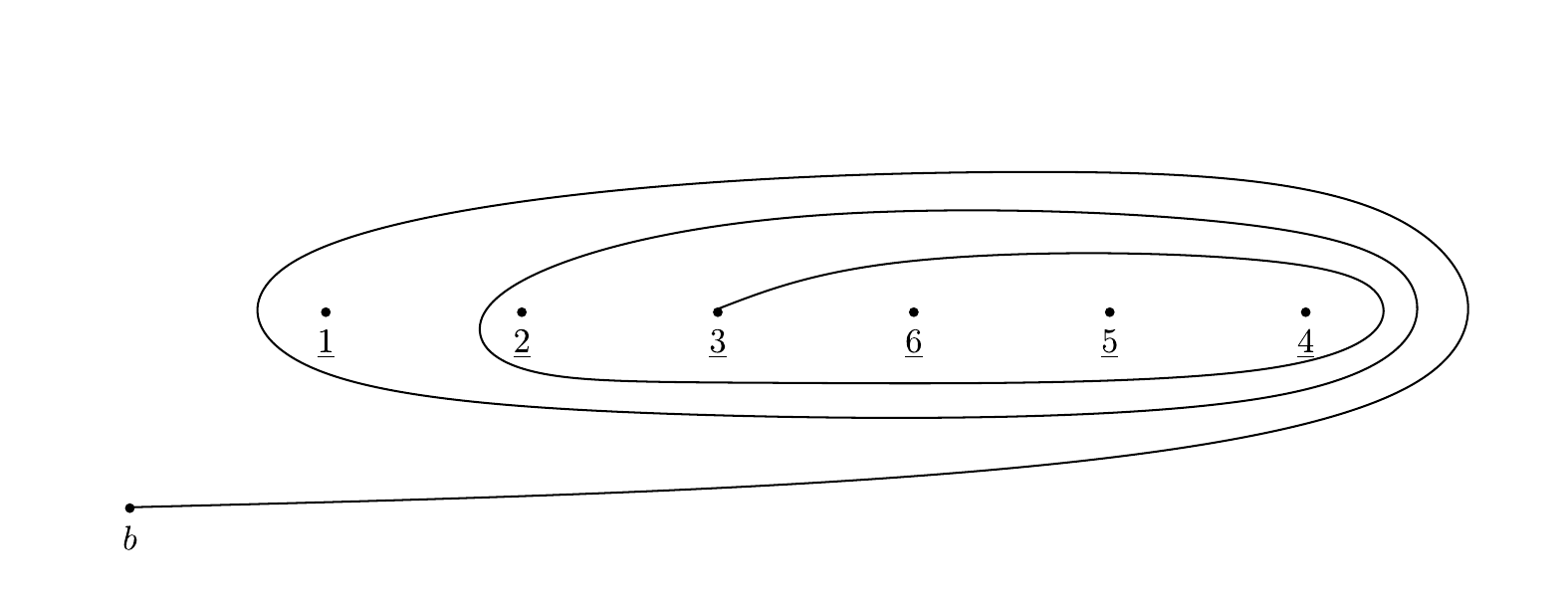}
\end{center}
Then $\pi(I_\g) = \r_3\r_6\r_5\r_4\r_2\r_3\r_6\r_5\r_4\r_1\r_2\r_3\r_6\r_5\r_4$, $\a_\pi(\g) = \a$, and $\g \in \G_{\pi,nd}$.
\end{Exa}

\section[Type E]{Type $\boldsymbol{E}$}\label{type_e}
In this section, we consider the quivers of type $E_6$ and $E_7$. For $n\in \{6,7\}$, let $\cE_n$ be a set of quivers whose underlying graph is
\begin{center}
\begin{tikzpicture}[scale =.8]		
\node[shape=circle,draw=black, minimum size = .4cm, inner sep = 4pt, outer sep = 0.5pt](7) at (-2,0) {$2$};
\node(1) at (0,0) {$\cdots$};
\node[shape=circle,draw=black, minimum size = .4cm, inner sep = 0pt, outer sep = 0.5pt](2) at (2,0) {$n-3$};
\node[shape=circle,draw=black, minimum size = .4cm, inner sep = 4pt, outer sep = 0.5pt](3) at (4,0) {$n$};
\node[shape=circle,draw=black, minimum size = .4cm, inner sep = 0pt, outer sep = 0.5pt](4) at (6,0) {$n-2$};
\node[shape=circle,draw=black, minimum size = .4cm, inner sep = 4pt, outer sep = 0.5pt](5) at (8,0) {$1$};
\node[shape=circle,draw=black, minimum size = .4cm, inner sep = 0pt, outer sep = 0.5pt](6) at (4,-2) {$n-1$};
\path[-](1) edge (2);
\path[-](2) edge (3);
\path[-](3) edge (4);
\path[-](4) edge (5);
\path[-](3) edge (6);
\path[-](7) edge (1);
\end{tikzpicture}
\end{center}

The roots of type $E$ are well-known; we refer to~\cite{Schif} for the list of positive roots of type $E$. Some of the positive roots of type $E$ are of the form $\a + \a_1$ or $\a+ \a_2$ where $\a$ is a positive root of a full subquiver of $E$. If there is a curve $\g$ such that the associated root is $\a$, then $\g$ can be extended a curve whose associated root is $\a+\a_1$ or $\a+\a_2$.

\begin{Lem}\label{lem_1_at_leaves}
Let $Q$ be a quiver in $\cE_n$ and $\a$ be a positive root of $Q$. Let $Q'$ be the full subquiver induced by the vertices $2, \ldots, n$. If $\a-\a_1 \in \{\a_{\pi'}(\g)\,|\,\g \in \G_{\pi',nd}\}$ where $\pi' \in P_{Q'}$, then $\a \in \{\a_{\pi}(\g)\,|\,\g \in \G_{\pi,nd}\}$ for some $\pi \in P_Q$. Similarly, if $Q'$ is a full subquiver induced by the vertices $1,3,\ldots, n$ and $\a-\a_2 \in \{\a_{\pi'}(\g)\,|\,\g \in \G_{\pi',nd}\}$ where $\pi' \in P_{Q'}$, then $\a$ is in $\{\a_\pi(\g)\,|\,\g \in \G_{\pi,nd}\}$ for some $\pi \in P_Q$.
\end{Lem}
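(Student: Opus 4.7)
My plan is to combine a short root-theoretic computation — which in the ADE setting forces $s_1(\a-\a_1)=\a$ — with a geometric construction that attaches a single extra crossing to a curve realizing $\a-\a_1$.

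First, a root-string argument. Set $\b := \a - \a_1$, so $\b$ is a positive root of $Q'$ with $\b_1 = 0$, and $\b + \a_1 = \a$ is a positive root of $Q$. Since root-strings in any ADE root system have length at most $2$, the $\a_1$-string through $\b$ is exactly $\{\b,\b+\a_1\}$, giving $\langle\b,\a_1^\vee\rangle = -1$. Because vertex $1$ is a leaf of the $E_n$ diagram adjacent only to $n-2$, $\langle\b,\a_1^\vee\rangle = 2\b_1 - \b_{n-2} = -\b_{n-2}$, so $\b_{n-2}=1$ and $s_1\b = \b + \a_1 = \a$.

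Next, I would extend the permutation and lift the curve. Vertex $1$ is a leaf of $Q$ and hence a source or a sink, so by Remark~\ref{rmk_subquiver} $\pi'$ extends to some $\pi\in P_Q$ with $\varphi(\pi)=\pi'$. I would choose $\pi$ so that $\pi^{-1}(1)=1$ when $1$ is a source and $\pi^{-1}(1)=n$ when $1$ is a sink, placing vertex $1$ at an extremal position. By Lemma~\ref{lem_subquiver} this $\pi$ admits a curve $\g_0\in\G_{\pi,nd}$ with $\a_\pi(\g_0)=\a-\a_1$; moreover, the construction in the proof of Lemma~\ref{lem_subquiver} lets me take $\g_0$ passing under $p_{\pi^{-1}(1)}$ and never crossing $\r_{\pi^{-1}(1)}$.

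Now I would modify $\g_0$ locally near the basepoint $b$. Because $p_{\pi^{-1}(1)}$ sits at a boundary position, the region of $H$ on the outer side of $p_{\pi^{-1}(1)}$ (where $x<1$ if $\pi^{-1}(1)=1$, or $x>n$ if $\pi^{-1}(1)=n$) is unobstructed by the remaining rays. I would splice into the final sub-arc of $\g_0$ (the portion where $y<1$) a short detour that goes around $p_{\pi^{-1}(1)}$ from that outer side and crosses $\r_{\pi^{-1}(1)}$ exactly once. The resulting $\g$ is non-self-crossing and admissible with $\pi(I_\g) = \pi(I_{\g_0})\,\r_1$, so $\a_\pi(\g) = s_1\,\a_\pi(\g_0) = s_1(\a - \a_1) = \a$. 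For the non-decreasing condition, the inequalities along $\pi(I_{\g_0})$ are inherited from $\g_0\in\G_{\pi,nd}$, and the only new inequality produced by the appended crossing is $\a = (\a - \a_1) + \a_1 \geq_D \a - \a_1$, which is immediate.

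The second half of the lemma is completely symmetric, since vertex $2$ is also a leaf adjacent only to $3$ and the same root-string argument gives $\b_3(\a-\a_2)=1$ and $s_2(\a-\a_2)=\a$. The main obstacle is the geometric splicing in the previous paragraph; the extremal placement $\pi^{-1}(1)\in\{1,n\}$ is precisely what makes a local detour around $p_{\pi^{-1}(1)}$ possible without touching any other ray or intersecting the earlier portion of $\g_0$.
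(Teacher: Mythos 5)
Your proposal is correct and follows essentially the same route as the paper: place the leaf vertex $1$ (a source or sink) at an extremal position of $\pi$, lift a non-decreasing curve for $\a-\a_1$ via Lemma~\ref{lem_subquiver} so that it avoids $\r_{\pi^{-1}(1)}$, and append one loop around $p_{\pi^{-1}(1)}$ at the end. Your root-string computation justifying $s_1(\a-\a_1)=\a$ is a small addition the paper leaves implicit, but it does not change the argument.
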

\begin{proof}
First note that the vertices 1 and 2 are always either source or sink. If $\a-\a_1 \in \{\a_{\pi'}(\g)\,|\,\g \in \G_{\pi',nd}\}$ where $\pi' \in P_{Q'}$, then $\a-\a_1 \in \{\a_{\pi}(\g)\,|\,\g \in \G_{\pi,nd}\}$ for any $\pi \in \varphi^{-1}(\pi')$ by Lemma~\ref{lem_subquiver} and Remark~\ref{rmk_subquiver}. As the vertex 1 is either source or sink, there are permutations in $\varphi^{-1}(\pi')$ so that $\pi^{-1}(1)$ is either 1 or $n$. Let $\g' \in \G_{\pi,nd}$ such that $\a_{\pi}(\g) = \a-\a_1$ and~$\g'$ does not cross $\r_1$. Consider $\g$ obtained from $\g'$ by looping around~$\underline{1}$ at the end. Then $\a_\pi(\g) = s_1(\a_\pi(\g'))= s_1(\a-\a_1) = \a$ and as $\g'$ is non-decreasing, $\g$ is also non-decreasing.

Similarly, there exists a permutation $\pi \in P_Q$ such that $\pi^{-1}(2)$ is either $1$ or $n$ and $\g \in \G_{\pi, nd}$ such that $\a_\pi(\g) = \a - \a_2$. We can extend $\g$ to $\g'$ by looping around $\underline{2}$ to obtain $\g'$. Then $\a_\pi(\g') = s_2(\a-\a_2) = \a$.
\end{proof}

\subsection[Type E6]{Type $\boldsymbol{E_6}$}
All roots of type $E$ are known. Let $Q \in \cE_6$ in this subsection. Any full subquiver of $Q$ is either type $A$ or type $D$.
By Lemma~\ref{lem_subquiver} and Remark~\ref{rmk_subquiver}, we only need to consider the following roots:
\[\begin{smallmatrix}
1&2 & 2&2 & 1 \\
&&1&&
\end{smallmatrix},\,\, \begin{smallmatrix}
1&2 & 3&2 & 1 \\
&&1&&
\end{smallmatrix},\,\,\begin{smallmatrix}
1&2 & 3&2 & 1 \\
&&2&&
\end{smallmatrix}\,\, \]
\begin{Lem}\label{lem_e_6_roots}
Let $Q \in \cE_6$. If $\a$ is one of the roots above, then for any $\pi \in P_Q$, $c_\pi(\a)$ and $c_\pi(\a)$ are positive. Furthermore, either $c_\pi(\a)<_D \a$ or $c^{-1}_\pi(\a) <_D \a$.
\end{Lem}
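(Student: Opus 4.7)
The plan is to split into three cases, one for each root listed in the statement, and to apply Lemma~\ref{lem_reducing_root} directly to the first two roots while handling the highest root separately by a direct computation. For each root $\a = \sum \b_i\a_i$, the preliminary step is to compute $m_i := 2\b_i - \sum_{j\sim i}\b_j$ for every vertex $i$, since $s_i\a = \a - m_i\a_i$ and the sign of $m_i$ controls whether $s_i\a$ is larger, smaller, or equal to $\a$ in the standard partial order.

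Direct computation yields the following. For Root~1, $\a = \begin{smallmatrix}1&2&2&2&1\\&&1&&\end{smallmatrix}$, one finds $m_6 = -1$, $m_3 = m_4 = 1$, and $m_1 = m_2 = m_5 = 0$, so the unique vertex $k$ with $s_k\a >_D \a$ is $k = 6$, and its adjacent vertex $j = 3$ (or $4$) has $s_j\a <_D \a$; Lemma~\ref{lem_reducing_root} then applies. For Root~2, $\a = \begin{smallmatrix}1&2&3&2&1\\&&1&&\end{smallmatrix}$, one finds $m_5 = -1$, $m_6 = 1$, and all other $m_i = 0$, and Lemma~\ref{lem_reducing_root} applies with $k = 5$, $j = 6$. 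In both cases we conclude $c_\pi\a <_D \a$ or $c_\pi^{-1}\a <_D \a$.

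For Root~3, the highest root $\theta = \begin{smallmatrix}1&2&3&2&1\\&&2&&\end{smallmatrix}$, the $m_i$ calculation gives $m_5 = 1$ and $m_i = 0$ for all $i \neq 5$, so no vertex satisfies $s_k\theta >_D \theta$ and Lemma~\ref{lem_reducing_root} is inapplicable. I would argue directly by tracking the intermediate vector in the right-to-left evaluation of $c_\pi\theta = s_{\pi(1)}\cdots s_{\pi(n)}\theta$: every $s_j$ with $j \neq 5$ fixes $\theta$, so the running vector remains $\theta$ just before $s_5$ is applied; at that step it becomes $\theta - \a_5$, with $\b_5 = 1$; and the remaining reflections $s_j$ with $j \neq 5$ only alter the $\a_j$-coefficient, never $\b_5$. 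Hence $\b_5(c_\pi\theta) = 1$. Since $c_\pi\theta$ is a Weyl translate of a root, it is a root, and every root is wholly non-negative or wholly non-positive; the positive value $\b_5 = 1$ forces $c_\pi\theta$ to be a positive root. Then, because $\theta$ is the highest root, $c_\pi\theta \leq_D \theta$, and since $\b_5(c_\pi\theta) = 1 < 2 = \b_5(\theta)$, we get $c_\pi\theta <_D \theta$ strictly. The same argument applied to $c_\pi^{-1}$ gives $c_\pi^{-1}\theta <_D \theta$ and its positivity.

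For positivity in the Root~1 and Root~2 cases, I would use the same dichotomy: $c_\pi\a$ and $c_\pi^{-1}\a$ are roots, and one identifies a preserved coefficient that stays strictly positive throughout the Coxeter product, forcing positivity via the sign dichotomy. The main obstacle in this plan is the Root~3 case, where Lemma~\ref{lem_reducing_root} fails and we must track the action of $c_\pi$ on $\theta$ by hand; the key simplification that makes this tractable is that each simple reflection $s_j$ modifies only the $\a_j$-coefficient of the current vector, so intermediate changes can be tracked one vertex at a time and the computation collapses to the single fact that $s_5$ reduces $\b_5$ from $2$ to $1$ and nothing restores it.
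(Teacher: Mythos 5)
Your proposal is correct and follows essentially the same route as the paper: the first two roots are dispatched by Lemma~\ref{lem_reducing_root} with the same choices of vertices ($k=6$ with $j=4$, and $k=5$ with $j=6$), and the highest root is treated separately using that it dominates every other positive root. The only differences are in the auxiliary justifications: the paper gets positivity of $c_\pi\a$ and $c_\pi^{-1}\a$ by noting each root has height greater than $6$ while a product of six simple reflections lowers height by at most $6$, and for the highest root it simply observes that $c_\pi\theta$ is a positive root distinct from $\theta$, whereas your tracking of the coefficient $\b_5$ (and of a coefficient $\geq 2$ in the other cases) establishes the same facts by a more explicit, equally valid computation.
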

\begin{proof}The height of all three roots are greater than~6. As $c_\pi$ is a composition of 6 simple reflections, for any root $\a$ above, $h(c_\pi\a) >0$ and $h(c_\pi^{-1}\a)>0$ for all $\pi$.
If $\a = \begin{smallmatrix}
1&2 & 2&2 & 1 \\
&&1&&
\end{smallmatrix}$, then $s_6\a >_D \a$, $s_4\a<_D\a$, and $s_i \a \leq_D \a$ for all other $i$. If $\a = \begin{smallmatrix}
1&2 & 3&2 & 1 \\
&&1&&
\end{smallmatrix}$, then $s_5\a >_D \a$, $s_6\a<_D\a$, and $s_i \a\leq_D\a$ for all other $i$. Thus in either case, $c_\pi \a <_D \a$ or $c_\pi^{-1} \a <_D\a$ by Lemma~\ref{lem_reducing_root}.

If $\a = \begin{smallmatrix}
1&2 & 3&2 & 1 \\
&&2&&
\end{smallmatrix}$, then $\a$ is the largest positive root of type $E_6$. As $c_\pi \a$ is a positive root that is different from $\a$, $c_\pi \a <_D \a$ and $c_\pi^{-1}\a <_D \a$ for any $\pi\in P_Q$.
\end{proof}

\begin{Prop} \label{prop_e_6}
Let $Q \in \cE_6$. Then $\bigcup_{\pi\in P_Q}\{\a_\pi(\g)\,|\,\g\in \G_{\pi,nd}\} = \D_Q^+$.
\end{Prop}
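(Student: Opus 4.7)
The containment $\bigcup_{\pi\in P_Q}\{\a_\pi(\g)\,|\,\g\in \G_{\pi,nd}\} \subseteq \D_Q^+$ is immediate from Remark~\ref{rmk_associatedd_roots_subset_of_real_schur_roots}, so only the reverse inclusion requires work. My plan is to prove the stronger statement that $\{\a_\pi(\g)\,|\,\g\in\G_{\pi,nd}\}=\D_Q^+$ for every $\pi\in P_Q$, by strong induction on the height $h(\a)=\sum_i\b_i$ of $\a=\sum_i\b_i\a_i\in\D_Q^+$. The three supporting tools are Lemma~\ref{lem_subquiver} (subquiver reduction), Lemma~\ref{lem_cox_transf} (Coxeter-transformation descent), and Lemma~\ref{lem_e_6_roots} (strict descent for the three full-support positive roots in $\cE_6$).

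The base case $h(\a)=1$ is trivial, since then $\a=\a_i$ for some $i$ and the elementary curve $\g_{\pi^{-1}(i)}$ satisfies $\a_\pi(\g_{\pi^{-1}(i)})=\a_i$ and lies in $\G_{\pi,nd}$. For the inductive step, fix any $\pi\in P_Q$ and a positive root $\a$ of height at least $2$, and split into two cases. If $\a$ has some zero coefficient, then $\a$ is a positive root of a proper connected full subquiver $Q'$ of $Q$; every such $Q'$ is of type $A$ or $D$, so Propositions~\ref{prop_a_non_dec} and~\ref{prop_non_dec_curve_d} give $\a\in\{\a_{\varphi(\pi)}(\g)\,|\,\g\in\G_{\varphi(\pi),nd}\}$, and Lemma~\ref{lem_subquiver} (with $\varphi(\pi)\in P_{Q'}$ guaranteed by Remark~\ref{rmk_subquiver}) lifts this to $\a\in\{\a_\pi(\g)\,|\,\g\in\G_{\pi,nd}\}$. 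Otherwise $\a$ has full support, and by the classification just before Lemma~\ref{lem_e_6_roots} is one of the three exceptional roots. By that lemma, both $c_\pi\a$ and $c_\pi^{-1}\a$ are positive, and at least one of them, call it $\beta$, satisfies $\beta<_D\a$; hence $h(\beta)<h(\a)$. By the inductive hypothesis $\beta\in\{\a_\pi(\g)\,|\,\g\in\G_{\pi,nd}\}$. Writing $\a=c_\pi^{\varepsilon}\beta$ for the appropriate $\varepsilon\in\{\pm1\}$ and noting that $\beta<_D c_\pi^{\varepsilon}\beta$, Lemma~\ref{lem_cox_transf} concludes $\a\in\{\a_\pi(\g)\,|\,\g\in\G_{\pi,nd}\}$.

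The main obstacle is not in the induction, which is essentially a bookkeeping exercise, but in the verification encoded in Lemma~\ref{lem_e_6_roots}: the descent step succeeds precisely because each of the three full-support positive roots has height strictly greater than $n=6$, so a single application of $c_\pi$ or $c_\pi^{-1}$ cannot send $\a$ to a negative root, and simultaneously the sign pattern of the simple reflections $s_i\a$ around a pair of adjacent vertices allows Lemma~\ref{lem_reducing_root} to force a strict decrease. Once that verification is in hand, the iterated Coxeter descent is guaranteed to exit the full-support regime in finitely many steps and land in a type $A$ or type $D$ subquiver, where the conclusion has already been established by the previous sections.
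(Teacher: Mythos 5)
There is a genuine gap in your case division. You split the induction into ``$\a$ has a zero coefficient'' versus ``$\a$ has full support, hence is one of the three exceptional roots,'' but the list displayed before Lemma~\ref{lem_e_6_roots} is \emph{not} a classification of the full-support positive roots of $E_6$: it is the list of roots that remain only \emph{after} the paper has additionally invoked Lemma~\ref{lem_1_at_leaves}. There are full-support positive roots of $E_6$ that are neither of the two kinds you allow --- for instance $\a_1+\a_2+\a_3+\a_4+\a_5+\a_6$ (all coefficients $1$, height $6$), and more generally every root of the form $\a'+\a_1$ or $\a'+\a_2$ with $\a'$ a full-support root of the $D_5$ or $A_5$ subquiver. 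Your induction gives these roots no treatment at all: they have no zero coefficient, and Lemma~\ref{lem_e_6_roots} says nothing about them (nor does its height argument apply, since their height need not exceed $6$). The paper handles exactly these roots by peeling off a leaf, i.e., writing $\a=(\a-\a_i)+\a_i$ for $i\in\{1,2\}$ and extending a non-decreasing curve for $\a-\a_i$ by a loop around the corresponding marked point (Lemma~\ref{lem_1_at_leaves}); that step is indispensable and is absent from your argument.

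This omission also undercuts your stronger claim that $\{\a_\pi(\g)\,|\,\g\in \G_{\pi,nd}\}=\D_Q^+$ for \emph{every} $\pi\in P_Q$. Lemma~\ref{lem_1_at_leaves} only produces a curve for permutations placing the relevant leaf vertex first or last, which is precisely why the paper states Proposition~\ref{prop_e_6} as a union over $\pi$ and why its proof restricts to $\pi$ with $\{\pi^{-1}(1),\pi^{-1}(2)\}\in\{\{1,2\},\{5,6\},\{1,6\}\}$, using $c_\pi\a=c_{(1\,2)\pi}\a$ to pass between $\pi$ and $(1\,2)\pi$. In your step for the three exceptional roots you apply the inductive hypothesis to $c_\pi^{\pm 1}\a$ for an \emph{arbitrary} $\pi$, which presupposes the per-$\pi$ statement for roots that the available lemmas only realize for special permutations. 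The pieces you do use --- the subquiver reduction, the positivity and descent from Lemma~\ref{lem_e_6_roots}, and the application of Lemma~\ref{lem_cox_transf} --- are applied correctly, but the induction does not close without Lemma~\ref{lem_1_at_leaves} and the attendant bookkeeping over which permutations realize which roots.
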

\begin{proof}
The full subquivers of $Q$ are type $A$ or $D$. By Lemma~\ref{lem_subquiver}, and Remark~\ref{rmk_subquiver}, and Propositions~\ref{prop_a_non_dec} and~\ref{prop_non_dec_curve_d}, the positive roots of these full subquivers are in $\{\a_\pi(\g)\,|\,\g\in \G_{\pi,nd}\}$ for any $\pi \in P_Q$.
Any root that is smaller than $\begin{smallmatrix}
1&2 & 2&2 & 1 \\
&&1&&
\end{smallmatrix}$ is type $A$, type $D$, or satisfies the assumptions of Lemma~\ref{lem_1_at_leaves}. Thus by Lemmas~\ref{lem_subquiver}, \ref{lem_1_at_leaves}, Remark~\ref{rmk_subquiver}, Propositions~\ref{prop_a_non_dec} and~\ref{prop_non_dec_curve_d}, such root is in $\{\a_\pi(\g)\,|\,\g\in \G_{\pi,nd}\}$ for some $\pi \in P_Q$. We just need to show that $\begin{smallmatrix}
1&2 & 2&2 & 1 \\
&&1&&
\end{smallmatrix}$, $\begin{smallmatrix}
1&2 & 3&2 & 1 \\
&&1&&
\end{smallmatrix}$, and $\begin{smallmatrix}
1&2 & 3&2 & 1 \\
&&2&&
\end{smallmatrix}$ are in $\bigcup_{\pi\in P_Q}\{\a_\pi(\g)\,|\,\g\in \G_{\pi,nd}\}$.

Let $\pi \in P_Q$ such that $\big\{\pi^{-1}(1), \pi^{-1}(2)\big\} = \{1,2\}, \{5,6\}$, or $\{1,6\}$. Such permutation exists as each of the vertices 1 and 2 is always either source or sink. By Lemma~\ref{lem_e_6_roots}, $c_\pi \a$ or $c_\pi^{-1}\a$ is smaller than $\a$ where $\a = \begin{smallmatrix}
1&2 & 2&2 & 1 \\
&&1&&
\end{smallmatrix}$. If $\big\{\pi^{-1}(1), \pi^{-1}(2)\big\} = \{1,6\}$, then by the proof of Lemma~\ref{lem_1_at_leaves}, every root that is smaller than $\a$ is in $\{\a_\pi(\g)\,|\,\g\in \G_{\pi,nd}\}$. As $c_\pi\a$ or $c_\pi^{-1} \a <_D \a$, the root $\a$ is in $\{\a_\pi(\g)\,|\,\g\in \G_{\pi,nd}\}$ by Lemmas~\ref{lem_cox_transf} and~\ref{lem_1_at_leaves}. Similarly, the roots $\begin{smallmatrix}
1&2 & 3&2 & 1 \\
&&1&&
\end{smallmatrix}, \begin{smallmatrix}
1&2 & 3&2 & 1 \\
&&2&&
\end{smallmatrix}$ are in $\{\a_\pi(\g)|\g\in \G_{\pi,nd}\}$.

If $\big\{\pi^{-1}(1), \pi^{-1}(2)\big\} = \{1,2\}$ or $\{5,6\}$, then $c_\pi\a = c_{(1\,\, 2) \pi}\a$ as $s_1$ and $s_2$ commute. Note that every root that is smaller than $\a$ is in $\{\a_\pi(\g)\,|\,\g\in \G_{\pi,nd}\}$ or $ \{\a_{(1 \,\, 2)\pi}(\g)\,|\,\g\in \G_{(1\,\,2)\pi,nd}\}$. Thus $c_\pi\a = c_{(1\,\,2)\pi}\a$ is in either $\{\a_\pi(\g)\,|\,\g\in \G_{\pi,nd}\}$ or $ \{\a_{(1 \,\, 2)\pi}(\g)\,|\,\g\in \G_{(1\,\,2)\pi,nd}\}$. Therefore $\a$ is in $\{\a_\pi(\g)\,|\,\g\in \G_{\pi,nd}\} \cup \{\a_{(1 \,\, 2)\pi}(\g)\,|\,\g\in \G_{(1\,\,2)\pi,nd}\}$ and so are the roots $\begin{smallmatrix}
1&2 & 3&2 & 1 \\
&&1&&
\end{smallmatrix}$ and $\begin{smallmatrix}
1&2 & 3&2 & 1 \\
&&2&&
\end{smallmatrix}$.
\end{proof}

\subsection[Type E7]{Type $\boldsymbol{E_7}$}\label{section6.2}
Of the positive roots of $E_7$, many of them are either type $A$, $D$, or $E_6$. Also by Lemma~\ref{lem_1_at_leaves}, many of the roots of type $E_7$ are known to be in $\bigcup_{\pi \in P_Q} \{\a_\pi(\g)\,|\,\g\in \G_{\pi,nd}\}$ if the roots below are in $\bigcup_{\pi \in P_Q} \{\a_\pi(\g)\,|\,\g\in \G_{\pi,nd}\}$.

\begin{tikzpicture}
\node at (-.5,0){1};
\node(1) at (0,0) {$2$};
\node(2) at (.5,0) {$2$};
\node(3) at (1,0) {$2$};
\node(4) at (1.5,0) {$2$};
\node(5) at (2,0) {$1$};
\node(6) at (1,-.5) {$1$};

\node at (3,0){1};
\node(1) at (3.5,0) {$2$};
\node(2) at (4,0) {$2$};
\node(3) at (4.5,0) {$3$};
\node(4) at (5,0) {$2$};
\node(5) at (5.5,0) {$1$};
\node(6) at (4.5,-.5) {$1$};

\node at (6.5,0){1};
\node(1) at (7,0) {$2$};
\node(2) at (7.5,0) {$3$};
\node(3) at (8,0) {$3$};
\node(4) at (8.5,0) {$2$};
\node(5) at (9,0) {$1$};
\node(6) at (8,-.5) {$1$};

\node at (10,0){1};
\node(1) at (10.5,0) {$2$};
\node(2) at (11,0) {$2$};
\node(3) at (11.5,0) {$3$};
\node(4) at (12,0) {$2$};
\node(5) at (12.5,0) {$1$};
\node(6) at (11.5,-.5) {$2$};
\end{tikzpicture}

\begin{tikzpicture}
\node at (-.5,0){1};
\node(1) at (0,0) {$2$};
\node(2) at (.5,0) {$3$};
\node(3) at (1,0) {$3$};
\node(4) at (1.5,0) {$2$};
\node(5) at (2,0) {$1$};
\node(6) at (1,-.5) {$2$};

\node at (3,0){1};
\node(1) at (3.5,0) {$2$};
\node(2) at (4,0) {$3$};
\node(3) at (4.5,0) {$4$};
\node(4) at (5,0) {$2$};
\node(5) at (5.5,0) {$1$};
\node(6) at (4.5,-.5) {$2$};

\node at (6.5,0){1};
\node(1) at (7,0) {$2$};
\node(2) at (7.5,0) {$3$};
\node(3) at (8,0) {$4$};
\node(4) at (8.5,0) {$3$};
\node(5) at (9,0) {$1$};
\node(6) at (8,-.5) {$2$};

\node at (10,0){1};
\node(1) at (10.5,0) {$2$};
\node(2) at (11,0) {$3$};
\node(3) at (11.5,0) {$4$};
\node(4) at (12,0) {$3$};
\node(5) at (12.5,0) {$2$};
\node(6) at (11.5,-.5) {$2$};
\end{tikzpicture}

\begin{Lem}\label{lem_E_7_6_roots}
Let $Q \in \cE_7$ and $\pi \in P_Q$. If \[\a \in \left\{ \begin{smallmatrix}
1&2 &2 &2&2 & 1 \\
&&&1&&
\end{smallmatrix}, \begin{smallmatrix}
1&2 &2 &3&2 & 1 \\
&&&2&&
\end{smallmatrix}, \begin{smallmatrix}
1&2 &3 &3&2 & 1 \\
&&&2&&
\end{smallmatrix},\begin{smallmatrix}
1&2 &3 &4&2 & 1 \\
&&&2&&
\end{smallmatrix},\begin{smallmatrix}
1&2 &3 &4&3 & 1 \\
&&&2&&
\end{smallmatrix},\begin{smallmatrix}
1&2 &3 &4&3 & 2 \\
&&&2&&
\end{smallmatrix} \right\},\] then $c_\pi\a$ and $c_\pi^{-1}\a$ are positive and $c_\pi \a<_D\a$ or $c_\pi^{-1}\a <_D\a$.
\end{Lem}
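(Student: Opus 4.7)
The plan is to apply Lemma~\ref{lem_reducing_root} to the first five roots on the list and to handle the sixth separately, since it turns out to be the highest root of~$E_7$.

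First I would verify that $c_\pi\a$ and $c_\pi^{-1}\a$ are positive for every $\pi\in P_Q$ and every $\a$ on the list. In type $ADE$, a positive root $\b$ different from every simple root satisfies $s_i\b\in\D_Q^+$ with $h(s_i\b)=h(\b)\pm 1$. Writing $c_\pi\a = s_{\pi(1)}\cdots s_{\pi(n)}\a$ and setting $\b_k=s_{\pi(k+1)}\cdots s_{\pi(n)}\a$, a short induction on $n-k$ shows that each $\b_k$ is a positive root with $h(\b_k)\geq h(\a)-(n-k)$, provided no intermediate $\b_j$ coincides with a simple root. Since every root on the list has $h(\a)\geq 11$ while $n=7$, we obtain $h(\b_k)\geq 4\geq 2$ at every stage, so the induction closes and $c_\pi\a$ is positive. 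The same reasoning applies to $c_\pi^{-1}\a$.

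For each of the first five roots I would then verify the hypotheses of Lemma~\ref{lem_reducing_root}, namely that there is a unique vertex $k$ with $s_k\a>_D\a$ and some neighbour $j$ of $k$ with $s_j\a<_D\a$. Using $\langle\a,\a_k^\vee\rangle = 2\b_k-\sum_{i\sim k}\b_i$, a direct computation at each vertex of the $E_7$ diagram produces the following witnesses: for $\begin{smallmatrix}1&2&2&2&2&1\\&&&1&&\end{smallmatrix}$ take $k=7$ and $j=5$; for $\begin{smallmatrix}1&2&2&3&2&1\\&&&2&&\end{smallmatrix}$ take $k=4$ and $j=3$; for $\begin{smallmatrix}1&2&3&3&2&1\\&&&2&&\end{smallmatrix}$ take $k=7$ and $j=4$; for $\begin{smallmatrix}1&2&3&4&2&1\\&&&2&&\end{smallmatrix}$ take $k=5$ and $j=7$; and for $\begin{smallmatrix}1&2&3&4&3&1\\&&&2&&\end{smallmatrix}$ take $k=1$ and $j=5$. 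Lemma~\ref{lem_reducing_root} then gives $c_\pi\a<_D\a$ or $c_\pi^{-1}\a<_D\a$ in each case.

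For the last root $\theta=\begin{smallmatrix}1&2&3&4&3&2\\&&&2&&\end{smallmatrix}$ one has $\langle\theta,\a_k^\vee\rangle\geq 0$ at every vertex, so $s_k\theta\leq_D\theta$ for all $k$ and Lemma~\ref{lem_reducing_root} does not apply. However, $\theta$ is the highest root of $E_7$ (its coefficients $(\b_1,\ldots,\b_7)=(2,1,2,3,3,2,4)$ are those of the standard highest root), hence the unique maximum of $(\D_Q^+,\leq_D)$. Since $c_\pi\theta$ is positive by the previous paragraph, it satisfies $c_\pi\theta\leq_D\theta$, so it suffices to rule out $c_\pi\theta=\theta$. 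This follows from the well-known fact that a Coxeter element of a finite irreducible Weyl group has no eigenvalue~$1$ on the reflection representation: its eigenvalues are $e^{2\pi i m/h}$ as $m$ ranges over the exponents, and none of the exponents $1,5,7,9,11,13,17$ of $E_7$ is divisible by $h=18$. Therefore $c_\pi\theta<_D\theta$, and the same argument handles $c_\pi^{-1}\theta$.

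The only real obstacle is the per-vertex bookkeeping needed to exhibit a unique increasing direction $k$ and a decreasing neighbour $j$ for each of the five subtler roots; once this is done the cases collapse into Lemma~\ref{lem_reducing_root}, and the highest root is the only case requiring the separate maximality argument.
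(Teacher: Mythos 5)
Your proposal is correct and follows essentially the same route as the paper: positivity of $c_\pi\a$ and $c_\pi^{-1}\a$ from the height bound, Lemma~\ref{lem_reducing_root} with an explicit increasing vertex and decreasing neighbour for the first five roots (your witnesses all check out against the paper's vertex labelling of $E_7$), and maximality of the highest root for the last case. The only difference is presentational: the paper verifies just one representative root and asserts the rest "similarly," and simply states that $c_\pi\a\neq\a$ for the highest root, whereas you supply all five witnesses and justify $c_\pi\theta\neq\theta$ via the eigenvalues of the Coxeter element.
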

\begin{proof}
As the height of each roots is greater than $7$, $c_\pi\a$ and $c_\pi^{-1}\a$ are positive.
The root $ \a = \begin{smallmatrix}
1&2 &3 &4&3 & 2 \\
&&&2&&
\end{smallmatrix}$ is the largest root of type $E_7$. Thus $c_\pi\a <_D \a$ for any $\pi \in P_Q$. For other roots, it suffices to show that for each root there is a unique $k \in [7]$ such that $s_k \a >_D \a$ and an adjacent vertex $\ell$ such that $s_\ell \a<_D\a$ due to Lemma~\ref{lem_reducing_root}.

For instance, let $\a= \begin{smallmatrix}
1&2 &3 &3&2 & 1 \\
&&&2&&
\end{smallmatrix}.$ Note that $s_7\a=\a+\a_7$, $s_6\a = \a -\a_6$, $s_4\a = \a -\a_4$, and $s_i\a = \a$ for other $i$. Similarly, all the other roots have a unique index such that $s_k\a = \a+\a_k$ and an adjacent vertex $j$ such that $s_j\a =\a-\a_j.$
\end{proof}

Note that the roots $\!\begin{smallmatrix}
1&2 &2 &2&2 & 1 \\
&&&1&&
\end{smallmatrix}\!$ and $\!\begin{smallmatrix}
1&2 &2 &3&2 & 1 \\
&&&2&&
\end{smallmatrix}\!$ reduce to a root that we know to be in $\bigcup_{\pi\in P_Q}\!\{\a_\pi(\g)\allowbreak |\, \g\in \G_{\pi,nd}\}$. To show the others, we first need to look at the roots $\begin{smallmatrix}
1&2 &3 &3&2 & 1 \\
&&&1&&
\end{smallmatrix}$ and $\begin{smallmatrix}
1&2 &2 &3&2 & 1 \\
&&&1&&
\end{smallmatrix}$.

\begin{Lem} \label{lem_e_7_root_1233211}
Let $Q \in \cE_7$ and $\a =\begin{smallmatrix}
1&2 &3 &3&2 & 1 \\
&&&1&&
\end{smallmatrix}.$ Then $\a \in \bigcup_{\pi\in P_Q} \{\a_\pi(\g)\,|\, \g\in \G_{\pi,nd}\}$.
\end{Lem}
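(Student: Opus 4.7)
A direct calculation using the $E_7$ Cartan data gives $s_i\a = \a$ for $i \in \{1, 2, 3, 5, 7\}$, $s_4\a = \a - \a_4$, and $s_6\a = \a + \a_6$. So the only vertex $k$ with $s_k\a >_D \a$ is $k = 6$, whose sole neighbour in the diagram is $7$; but $s_7\a = \a$ is not $<_D \a$, so Lemma \ref{lem_reducing_root} does not apply to $\a$. The strategy is instead to reduce $\a$ to the strictly smaller root $\a - \a_4 - \a_7 = \begin{smallmatrix} 1 & 2 & 2 & 2 & 2 & 1 \\ & & & 1 & & \end{smallmatrix}$, which is the first root in the list handled by Lemma \ref{lem_E_7_6_roots}.

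First I would verify that $\a - \a_4 - \a_7$ is indeed a positive root (for instance by checking $\langle \a - \a_4 - \a_7, \a - \a_4 - \a_7\rangle = 2$). Lemma \ref{lem_E_7_6_roots} together with Lemma \ref{lem_cox_transf} and induction on the height of a root (base case: simple roots, which are trivially associated to the curves $\g_i$) then yield $\a - \a_4 - \a_7 \in \{\a_\pi(\g)\,|\,\g \in \G_{\pi, nd}\}$ for every $\pi \in P_Q$. The key observation is $\a = s_4 s_7(\a - \a_4 - \a_7)$, and both reflection steps strictly increase the root in $\leq_D$: the coefficient of $\a_7$ in $\a - \a_4 - \a_7$ equals $2$ while the coefficients at its neighbours $4, 5, 6$ sum to $5$, giving $s_7(\a - \a_4 - \a_7) = \a - \a_4$; similarly, the coefficient of $\a_4$ in $\a - \a_4$ equals $2$ while the sum at its neighbours $3, 7$ is $5$, so $s_4(\a - \a_4) = \a$.

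To conclude, fix any $\pi \in P_Q$ and a non-decreasing non-self-crossing admissible curve $\g_0$ with $\a_\pi(\g_0) = \a - \a_4 - \a_7$. I would construct $\g$ from $\g_0$ by appending, near the basepoint $b$, two additional crossings in succession, first of $\r_{\pi^{-1}(7)}$ and then of $\r_{\pi^{-1}(4)}$. The resulting curve satisfies $\a_\pi(\g) = s_4 s_7 \cdot \a_\pi(\g_0) = \a$, and it is non-decreasing because the two appended steps correspond to the strict increases $\a - \a_4 - \a_7 <_D \a - \a_4 <_D \a$.

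The main obstacle is the geometric realisation of this tail extension: one must verify that, possibly after replacing $\g_0$ by an isotopic representative, the two appended ray crossings near $b$ can be effected without introducing self-intersections with the body of $\g_0$. Since both crossings can in principle be achieved by short detours near $b$ (routed below the marked points $p_1, \ldots, p_n$ before deviating upward to cross the relevant ray), this should always be possible, but a careful verification of non-self-crossing---particularly when $\g_0$ behaves intricately near $b$---is the technical heart of the argument.
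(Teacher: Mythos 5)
Your root-level arithmetic is correct: $s_4\a=\a-\a_4$, $s_6\a=\a+\a_6$, $s_i\a=\a$ otherwise, so Lemma \ref{lem_reducing_root} indeed does not apply, and $\a=s_4s_7(\a-\a_4-\a_7)$ with both reflection steps strictly increasing in $\leq_D$. The genuine gap is exactly where you place "the technical heart" and then pass over it: the claim that a non-decreasing non-self-crossing curve $\g_0$ with $\a_\pi(\g_0)=\a-\a_4-\a_7$ can always be extended near the basepoint by two further crossings of $\r_{\pi^{-1}(7)}$ and $\r_{\pi^{-1}(4)}$ without creating self-intersections. This is not a routine isotopy verification. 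A tail that crosses $\r_j$ exactly once and returns to $b$ must encircle the marked point $p_j$, and it must do so inside the complementary region of $\g_0$ that contains $b$; whether $p_{\pi^{-1}(7)}$ and then $p_{\pi^{-1}(4)}$ are accessible in this sense depends entirely on how $\g_0$ winds among the marked points. If one could always append a single reflection along an arbitrary increasing chain in the root poset, the whole theorem would follow by a one-step-at-a-time induction; the paper's type $D$ example of the root $\begin{smallmatrix}1&2&2&1\\&&1&\end{smallmatrix}$, which is not the associated root of any strictly increasing non-self-crossing curve for any $\pi\in P_Q$, shows that increasing chains of reflections need not be realizable by admissible curves. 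So "this should always be possible" is precisely the unproven --- and in general false --- assertion on which your argument rests.

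For comparison, the paper does not attempt such a uniform extension: it splits into cases according to the orientation of $Q$ at the branch vertex $7$. When the arrows force $\pi^{-1}(4)<\pi^{-1}(7)<\pi^{-1}(6)$ (or the reverse) for every $\pi\in P_Q$, one of $c_\pi^{\pm1}\a$ is bounded above by $\a-\a_4-\a_7$ and Lemma \ref{lem_cox_transf} applies; for the two remaining orientations neither $c_\pi\a$ nor $c_\pi^{-1}\a$ is comparable to $\a$, and the paper resorts to explicit non-decreasing curves found by computer search (Tables \ref{E_7_table_1233211_1} and \ref{E_7_table_1233211_2}). The need for that search in the hard orientations is strong evidence that the two-crossing tail cannot simply be appended in general. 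A secondary issue: your assertion that $\a-\a_4-\a_7\in\{\a_\pi(\g)\,|\,\g\in\G_{\pi,nd}\}$ for \emph{every} $\pi\in P_Q$ does not follow from Lemmas \ref{lem_E_7_6_roots} and \ref{lem_cox_transf} alone, since the root $c_\pi^{\pm1}(\a-\a_4-\a_7)$ is no longer in the list of Lemma \ref{lem_E_7_6_roots}; the paper establishes membership of this root and its reductions only for carefully chosen permutations, via Lemma \ref{lem_1_at_leaves} and the type $A$, $D$, $E_6$ results.
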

\begin{proof}
Note that $s_4\a= \a-\a_4$, $s_6\a=\a+\a_6$, and $s_i\a=\a$ for $i \neq 4,6$. As the vertex $4$ is not adjacent to the vertex $6$, we cannot apply Lemma~\ref{lem_reducing_root}.
To show that $\a$ is the associated root of a non-decreasing non-self-crossing admissible curve for some $\pi\in P_Q$, we divide the quivers into different cases. First, consider any quiver $Q$ of the form below:
\begin{center}
\begin{tikzpicture}[scale =.7]		
\node[shape=circle,draw=black, minimum size = .4cm, inner sep = 4pt, outer sep = 0.5pt](7) at (-2,0) {$2$};
\node[shape=circle,draw=black, minimum size = .4cm, inner sep = 4pt, outer sep = 0.5pt](1) at (0,0) {$3$};
\node[shape=circle,draw=black, minimum size = .4cm, inner sep = 4pt, outer sep = 0.5pt](2) at (2,0) {$4$};
\node[shape=circle,draw=black, minimum size = .4cm, inner sep = 4pt, outer sep = 0.5pt](3) at (4,0) {$7$};
\node[shape=circle,draw=black, minimum size = .4cm, inner sep = 4pt, outer sep = 0.5pt](4) at (6,0) {$5$};
\node[shape=circle,draw=black, minimum size = .4cm, inner sep = 4pt, outer sep = 0.5pt](5) at (8,0) {$1$};
\node[shape=circle,draw=black, minimum size = .4cm, inner sep = 4pt, outer sep = 0.5pt](6) at (4,-2) {$6$};
\path[-](1) edge (2);
\path[->](2) edge (3);
\path[-](3) edge (4);
\path[-](4) edge (5);
\path[->](3) edge (6);
\path[-](7) edge (1);
\end{tikzpicture}
\end{center}

Due to the orientation of $Q$, for every $\pi \in P_Q$, $\pi^{-1}(4) < \pi^{-1}(7) < \pi^{-1}(6)$. Thus $c_\pi\a \leq_D s_4s_7s_6\a =\begin{smallmatrix}
1&2 &2 &2&2 & 1 \\
&&&1&&
\end{smallmatrix}$. This root and all the smaller roots are in $\{\a_\pi(\g)\,|\, \g\in \G_{\pi,nd}\}$ or $\{\a_{(1\,\,2)\pi}(\g)\,|\allowbreak \g\in \G_{(1\,\,2)\pi,nd}\}$ for $\pi \in P_Q$ such that $\big\{\pi^{-1}(1), \pi^{-1}(2)\big\} = \{1,2\},\,\{1,7\},$ or $\{6,7\},$ just like $\begin{smallmatrix}
1&2 &2&2 & 1 \\
&&1&&
\end{smallmatrix}$ in type $E_6$ case. Thus by Lemma~\ref{lem_cox_transf}, $\a \in \bigcup_{\pi\in P_Q} \{\a_\pi(\g)\,|\, \g\in \G_{\pi,nd}\}$.

Similarly, if $Q$ is a quiver with the following orientations
\begin{center}
\begin{tikzpicture}[scale =.7]		
\node[shape=circle,draw=black, minimum size = .4cm, inner sep = 4pt, outer sep = 0.5pt](7) at (-2,0) {$2$};
\node[shape=circle,draw=black, minimum size = .4cm, inner sep = 4pt, outer sep = 0.5pt](1) at (0,0) {$3$};
\node[shape=circle,draw=black, minimum size = .4cm, inner sep = 4pt, outer sep = 0.5pt](2) at (2,0) {$4$};
\node[shape=circle,draw=black, minimum size = .4cm, inner sep = 4pt, outer sep = 0.5pt](3) at (4,0) {$7$};
\node[shape=circle,draw=black, minimum size = .4cm, inner sep = 4pt, outer sep = 0.5pt](4) at (6,0) {$5$};
\node[shape=circle,draw=black, minimum size = .4cm, inner sep = 4pt, outer sep = 0.5pt](5) at (8,0) {$1$};
\node[shape=circle,draw=black, minimum size = .4cm, inner sep = 4pt, outer sep = 0.5pt](6) at (4,-2) {$6$};
\path[-](1) edge (2);
\path[<-](2) edge (3);
\path[-](3) edge (4);
\path[-](4) edge (5);
\path[<-](3) edge (6);
\path[-](7) edge (1);
\end{tikzpicture}
\end{center}
 then $\pi^{-1}(4) > \pi^{-1}(7) > \pi^{-1}(6)$ and $c_\pi^{-1}\a <_D \a$.

Now consider a quiver of the orientation
\begin{center}
\begin{tikzpicture}[scale =.7]		
\node[shape=circle,draw=black, minimum size = .4cm, inner sep = 4pt, outer sep = 0.5pt](7) at (-2,0) {$2$};
\node[shape=circle,draw=black, minimum size = .4cm, inner sep = 4pt, outer sep = 0.5pt](1) at (0,0) {$3$};
\node[shape=circle,draw=black, minimum size = .4cm, inner sep = 4pt, outer sep = 0.5pt](2) at (2,0) {$4$};
\node[shape=circle,draw=black, minimum size = .4cm, inner sep = 4pt, outer sep = 0.5pt](3) at (4,0) {$7$};
\node[shape=circle,draw=black, minimum size = .4cm, inner sep = 4pt, outer sep = 0.5pt](4) at (6,0) {$5$};
\node[shape=circle,draw=black, minimum size = .4cm, inner sep = 4pt, outer sep = 0.5pt](5) at (8,0) {$1$};
\node[shape=circle,draw=black, minimum size = .4cm, inner sep = 4pt, outer sep = 0.5pt](6) at (4,-2) {$6$};
\path[-](1) edge (2);
\path[<-](2) edge (3);
\path[-](3) edge (4);
\path[-](4) edge (5);
\path[->](3) edge (6);
\path[-](7) edge (1);
\end{tikzpicture}
\end{center}

Here $c_\pi \a $ and $c_\pi^{-1}\a $ are not comparable to $\a$. Thus to find $\pi \in P_Q$ and $\g \in \G_{\pi,nd}$ such that $\a_\pi(\g) = \a$, we use Sage to find a permutation $\pi$ and non-decreasing non-self-crossing admissible curve $\g$ such that $\a_\pi(\g) = \a$. Refer to Tables \ref{E_7_table_1233211_1} and \ref{E_7_table_1233211_2}. The quivers of following orientation
\begin{center}
\begin{tikzpicture}[scale =.7]		
\node[shape=circle,draw=black, minimum size = .4cm, inner sep = 4pt, outer sep = 0.5pt](7) at (-2,0) {$2$};
\node[shape=circle,draw=black, minimum size = .4cm, inner sep = 4pt, outer sep = 0.5pt](1) at (0,0) {$3$};
\node[shape=circle,draw=black, minimum size = .4cm, inner sep = 4pt, outer sep = 0.5pt](2) at (2,0) {$4$};
\node[shape=circle,draw=black, minimum size = .4cm, inner sep = 4pt, outer sep = 0.5pt](3) at (4,0) {$7$};
\node[shape=circle,draw=black, minimum size = .4cm, inner sep = 4pt, outer sep = 0.5pt](4) at (6,0) {$5$};
\node[shape=circle,draw=black, minimum size = .4cm, inner sep = 4pt, outer sep = 0.5pt](5) at (8,0) {$1$};
\node[shape=circle,draw=black, minimum size = .4cm, inner sep = 4pt, outer sep = 0.5pt](6) at (4,-2) {$6$};
\path[-](1) edge (2);
\path[->](2) edge (3);
\path[-](3) edge (4);
\path[-](4) edge (5);
\path[<-](3) edge (6);
\path[-](7) edge (1);
\end{tikzpicture}
\end{center}
are obtained from the quivers of orientation above by reversing all arrows. Then the associated root of the mirror images of the curves in the tables would be~$\a$.
\end{proof}

\begin{table}[t]\centering
\begin{tabular}{c|c|c}
\hline
Quiver, $Q$ & $\pi \in P_Q$ & $\gamma \in \Gamma_{\pi, nd}$\\
\hline
\begin{tikzpicture}[scale =.35]		
\node(7) at (-2,0) {$\cdot$};
\node(1) at (0,0) {$\cdot$};
\node(2) at (2,0) {$\cdot$};
\node(3) at (4,0) {$\cdot$};
\node(4) at (6,0) {$\cdot$};
\node(5) at (8,0) {$\cdot$};
\node(6) at (4,-2) {$\cdot$};
\path[<-](2) edge (3);
\path[->](3) edge (6);

\path[->](1) edge (2);
\path[->](3) edge (4);
\path[->](4) edge (5);
\path[->](7) edge (1);
\end{tikzpicture}&
$\left(\begin{smallmatrix}
1&2&3&4&5&6&7 \\
2& 3& 7& 6& 5& 4 &1\\
\end{smallmatrix}\right)$
&
\begin{tikzpicture}[rounded corners,scale=.1]
\node at (51,0){$\cdot$};
\node at (5,0){$\cdot$};
\node at (11,0){$\cdot$};
\node at (41,0){$\cdot$};
\node at (31,0){$\cdot$};
\node at (29,0){$\cdot$};
\node at (27,0){$\cdot$};
\draw (9,0) to (9,1) to (13,1) to (13,0);
\draw (35,0) to (35,3) to (47,3) to (47,0);
\draw (7,0) to (7,2) to (15,2) to (15,0);
\draw (23,0) to (23,6) to (49,6) to (49,0);
\draw (37,0) to (37,2) to (45,2) to (45,0);
\draw (1,-9) to (1,5) to (19,5) to (19,0);
\draw (25,0) to (25,2) to (33,2) to (33,0);
\draw (39,0) to (39,1) to (43,1) to (43,0);
\draw (3,0) to (3,4) to (17,4) to (17,0);
\draw (21,0) to (21,7) to (53,7) to (53,0);
\draw (27,0) to (27,-2) to (33,-2) to (33,0);
\draw (25,0) to (25,-3) to (35,-3) to (35,0);
\draw (9,0) to (9,-10) to (53,-10) to (53,0);
\draw (19,0) to (19,-6) to (43,-6) to (43,0);
\draw (13,0) to (13,-9) to (49,-9) to (49,0);
\draw (23,0) to (23,-4) to (37,-4) to (37,0);
\draw (3,0) to (3,-1) to (7,-1) to (7,0);
\draw (17,0) to (17,-7) to (45,-7) to (45,0);
\draw (21,0) to (21,-5) to (39,-5) to (39,0);
\draw (15,0) to (15,-8) to (47,-8) to (47,0);
\end{tikzpicture}\\
\hline

\begin{tikzpicture}[scale =.35]		
\node(7) at (-2,0) {$\cdot$};
\node(1) at (0,0) {$\cdot$};
\node(2) at (2,0) {$\cdot$};
\node(3) at (4,0) {$\cdot$};
\node(4) at (6,0) {$\cdot$};
\node(5) at (8,0) {$\cdot$};
\node(6) at (4,-2) {$\cdot$};
\path[<-](2) edge (3);
\path[->](3) edge (6);

\path[<-](7) edge (1);
\path[->](1) edge (2);
\path[->](3) edge (4);
\path[->](4) edge (5);
\end{tikzpicture}&
$\left(\begin{smallmatrix}
1&2&3&4&5&6&7 \\
3& 7& 6& 5& 4 &2&1\\
\end{smallmatrix}\right)$ &\begin{tikzpicture}[rounded corners,scale=.1]
\node at (49,0){$\cdot$};
\node at (47,0){$\cdot$};
\node at (9,0){$\cdot$};
\node at (35,0){$\cdot$};
\node at (27,0){$\cdot$};
\node at (25,0){$\cdot$};
\node at (23,0){$\cdot$};
\draw (3,0) to (3,4) to (15,4) to (15,0);
\draw (31,0) to (31,3) to (39,3) to (39,0);
\draw (29,0) to (29,4) to (41,4) to (41,0);
\draw (21,0) to (21,6) to (43,6) to (43,0);
\draw (1,-9) to (1,5) to (17,5) to (17,0);
\draw (5,0) to (5,3) to (13,3) to (13,0);
\draw (33,0) to (33,1) to (37,1) to (37,0);
\draw (45,0) to (45,2) to (51,2) to (51,0);
\draw (7,0) to (7,1) to (11,1) to (11,0);
\draw (19,0) to (19,7) to (53,7) to (53,0);
\draw (17,0) to (17,-4) to (37,-4) to (37,0);
\draw (7,0) to (7,-8) to (45,-8) to (45,0);
\draw (11,0) to (11,-7) to (43,-7) to (43,0);
\draw (23,0) to (23,-1) to (29,-1) to (29,0);
\draw (5,0) to (5,-9) to (51,-9) to (51,0);
\draw (3,0) to (3,-10) to (53,-10) to (53,0);
\draw (21,0) to (21,-2) to (31,-2) to (31,0);
\draw (15,0) to (15,-5) to (39,-5) to (39,0);
\draw (13,0) to (13,-6) to (41,-6) to (41,0);
\draw (19,0) to (19,-3) to (33,-3) to (33,0);
\end{tikzpicture}\\
\hline
\begin{tikzpicture}[scale =.35]		
\node(7) at (-2,0) {$\cdot$};
\node(1) at (0,0) {$\cdot$};
\node(2) at (2,0) {$\cdot$};
\node(3) at (4,0) {$\cdot$};
\node(4) at (6,0) {$\cdot$};
\node(5) at (8,0) {$\cdot$};
\node(6) at (4,-2) {$\cdot$};
\path[<-](2) edge (3);
\path[->](3) edge (6);

\path[->](7) edge (1);
\path[<-](1) edge (2);
\path[->](3) edge (4);
\path[->](4) edge (5);
\end{tikzpicture}&
$\left(\begin{smallmatrix}
1&2&3&4&5&6&7 \\
2& 7& 6& 5& 4 &3&1\\
\end{smallmatrix}\right)$
&
\begin{tikzpicture}[rounded corners,scale=.1]
\node at (37,0){$\cdot$};
\node at (5,0){$\cdot$};
\node at (31,0){$\cdot$};
\node at (25,0){$\cdot$};
\node at (19,0){$\cdot$};
\node at (17,0){$\cdot$};
\node at (15,0){$\cdot$};
\draw (11,0) to (11,5) to (39,5) to (39,0);
\draw (13,0) to (13,4) to (35,4) to (35,0);
\draw (21,0) to (21,2) to (29,2) to (29,0);
\draw (15,0) to (15,3) to (33,3) to (33,0);
\draw (1,-9) to (1,2) to (9,2) to (9,0);
\draw (23,0) to (23,1) to (27,1) to (27,0);
\draw (3,0) to (3,1) to (7,1) to (7,0);
\draw (7,0) to (7,-5) to (35,-5) to (35,0);
\draw (9,0) to (9,-4) to (27,-4) to (27,0);
\draw (29,0) to (29,-1) to (33,-1) to (33,0);
\draw (11,0) to (11,-3) to (23,-3) to (23,0);
\draw (3,0) to (3,-6) to (39,-6) to (39,0);
\draw (13,0) to (13,-2) to (21,-2) to (21,0);
\end{tikzpicture}\\
\hline

\begin{tikzpicture}[scale =.35]		
\node(7) at (-2,0) {$\cdot$};
\node(1) at (0,0) {$\cdot$};
\node(2) at (2,0) {$\cdot$};
\node(3) at (4,0) {$\cdot$};
\node(4) at (6,0) {$\cdot$};
\node(5) at (8,0) {$\cdot$};
\node(6) at (4,-2) {$\cdot$};
\path[<-](2) edge (3);
\path[->](3) edge (6);

\path[->](7) edge (1);
\path[->](1) edge (2);
\path[<-](3) edge (4);
\path[->](4) edge (5);
\end{tikzpicture}&
$\left(\begin{smallmatrix}
1&2&3&4&5&6&7 \\
2& 3& 5&7& 6& 4 &1\\
\end{smallmatrix}\right)$
&\begin{tikzpicture}[rounded corners,scale=.1]
\node at (47,0){$\cdot$};
\node at (7,0){$\cdot$};
\node at (11,0){$\cdot$};
\node at (39,0){$\cdot$};
\node at (27,0){$\cdot$};
\node at (31,0){$\cdot$};
\node at (29,0){$\cdot$};
\draw (9,0) to (9,1) to (13,1) to (13,0);
\draw (1,-9) to (1,6) to (19,6) to (19,0);
\draw (23,0) to (23,7) to (49,7) to (49,0);
\draw (33,0) to (33,4) to (45,4) to (45,0);
\draw (35,0) to (35,2) to (43,2) to (43,0);
\draw (5,0) to (5,3) to (15,3) to (15,0);
\draw (37,0) to (37,1) to (41,1) to (41,0);
\draw (3,0) to (3,5) to (17,5) to (17,0);
\draw (25,0) to (25,1) to (29,1) to (29,0);
\draw (21,0) to (21,8) to (51,8) to (51,0);
\draw (5,0) to (5,-1) to (9,-1) to (9,0);
\draw (19,0) to (19,-5) to (41,-5) to (41,0);
\draw (13,0) to (13,-8) to (49,-8) to (49,0);
\draw (17,0) to (17,-6) to (43,-6) to (43,0);
\draw (25,0) to (25,-2) to (33,-2) to (33,0);
\draw (23,0) to (23,-3) to (35,-3) to (35,0);
\draw (15,0) to (15,-7) to (45,-7) to (45,0);
\draw (3,0) to (3,-9) to (51,-9) to (51,0);
\draw (21,0) to (21,-4) to (37,-4) to (37,0);
\end{tikzpicture}\\
\hline

\begin{tikzpicture}[scale =.35]		
\node(7) at (-2,0) {$\cdot$};
\node(1) at (0,0) {$\cdot$};
\node(2) at (2,0) {$\cdot$};
\node(3) at (4,0) {$\cdot$};
\node(4) at (6,0) {$\cdot$};
\node(5) at (8,0) {$\cdot$};
\node(6) at (4,-2) {$\cdot$};
\path[<-](2) edge (3);
\path[->](3) edge (6);

\path[<-](7) edge (1);
\path[<-](1) edge (2);
\path[->](3) edge (4);
\path[->](4) edge (5);
\end{tikzpicture}&
$\left(\begin{smallmatrix}
1&2&3&4&5&6&7 \\
7& 6&5& 4 &3&2&1\\
\end{smallmatrix}\right)$
&\begin{tikzpicture}[rounded corners,scale=.1]
\node at (35,0){$\cdot$};
\node at (33,0){$\cdot$};
\node at (27,0){$\cdot$};
\node at (21,0){$\cdot$};
\node at (11,0){$\cdot$};
\node at (9,0){$\cdot$};
\node at (7,0){$\cdot$};
\draw (3,0) to (3,4) to (31,4) to (31,0);
\draw (5,0) to (5,2) to (13,2) to (13,0);
\draw (1,0) to (1,5) to (37,5) to (37,0);
\draw (15,0) to (15,3) to (29,3) to (29,0);
\draw (17,0) to (17,2) to (25,2) to (25,0);
\draw (19,0) to (19,1) to (23,1) to (23,-9);
\draw (25,0) to (25,-1) to (29,-1) to (29,0);
\draw (7,0) to (7,-2) to (13,-2) to (13,0);
\draw (31,0) to (31,-2) to (37,-2) to (37,0);
\draw (5,0) to (5,-3) to (15,-3) to (15,0);
\draw (1,0) to (1,-5) to (19,-5) to (19,0);
\draw (3,0) to (3,-4) to (17,-4) to (17,0);
\end{tikzpicture}
\\

\hline
\begin{tikzpicture}[scale =.35]		
\node(7) at (-2,0) {$\cdot$};
\node(1) at (0,0) {$\cdot$};
\node(2) at (2,0) {$\cdot$};
\node(3) at (4,0) {$\cdot$};
\node(4) at (6,0) {$\cdot$};
\node(5) at (8,0) {$\cdot$};
\node(6) at (4,-2) {$\cdot$};
\path[<-](2) edge (3);
\path[->](3) edge (6);

\path[<-](7) edge (1);
\path[->](1) edge (2);
\path[<-](3) edge (4);
\path[->](4) edge (5);
\end{tikzpicture}&
$\left(\begin{smallmatrix}
1&2&3&4&5&6&7 \\
3& 7& 6&5& 4 &2&1\\
\end{smallmatrix}\right)$
&\begin{tikzpicture}[rounded corners,scale=.1]
\node at (49,0){$\cdot$};
\node at (47,0){$\cdot$};
\node at (9,0){$\cdot$};
\node at (33,0){$\cdot$};
\node at (11,0){$\cdot$};
\node at (27,0){$\cdot$};
\node at (25,0){$\cdot$};
\draw (3,0) to (3,5) to (17,5) to (17,0);
\draw (45,0) to (45,2) to (51,2) to (51,0);
\draw (7,0) to (7,2) to (13,2) to (13,0);
\draw (5,0) to (5,4) to (15,4) to (15,0);
\draw (23,0) to (23,6) to (41,6) to (41,0);
\draw (1,-9) to (1,6) to (19,6) to (19,0);
\draw (21,0) to (21,7) to (43,7) to (43,0);
\draw (31,0) to (31,1) to (35,1) to (35,0);
\draw (25,0) to (25,5) to (39,5) to (39,0);
\draw (29,0) to (29,3) to (37,3) to (37,0);
\draw (17,0) to (17,-4) to (37,-4) to (37,0);
\draw (19,0) to (19,-3) to (35,-3) to (35,0);
\draw (23,0) to (23,-1) to (29,-1) to (29,0);
\draw (5,0) to (5,-8) to (45,-8) to (45,0);
\draw (21,0) to (21,-2) to (31,-2) to (31,0);
\draw (3,0) to (3,-9) to (51,-9) to (51,0);
\draw (7,0) to (7,-7) to (43,-7) to (43,0);
\draw (13,0) to (13,-6) to (41,-6) to (41,0);
\draw (15,0) to (15,-5) to (39,-5) to (39,0);
\end{tikzpicture}
\\
\hline
\begin{tikzpicture}[scale =.35]		
\node(7) at (-2,0) {$\cdot$};
\node(1) at (0,0) {$\cdot$};
\node(2) at (2,0) {$\cdot$};
\node(3) at (4,0) {$\cdot$};
\node(4) at (6,0) {$\cdot$};
\node(5) at (8,0) {$\cdot$};
\node(6) at (4,-2) {$\cdot$};
\path[<-](2) edge (3);
\path[->](3) edge (6);

\path[->](7) edge (1);
\path[<-](1) edge (2);
\path[<-](3) edge (4);
\path[->](4) edge (5);
\end{tikzpicture}&
$\left(\begin{smallmatrix}
1&2&3&4&5&6&7 \\
2& 5& 7& 6& 4 &3&1\\
\end{smallmatrix}\right)$
&\begin{tikzpicture}[rounded corners,scale=.1]
\node at (39,0){$\cdot$};
\node at (3,0){$\cdot$};
\node at (33,0){$\cdot$};
\node at (27,0){$\cdot$};
\node at (7,0){$\cdot$};
\node at (17,0){$\cdot$};
\node at (11,0){$\cdot$};
\draw (9,0) to (9,1) to (13,1) to (13,0);
\draw (25,0) to (25,6) to (49,6) to (49,0);
\draw (1,-9) to (1,5) to (19,5) to (19,0);
\draw (23,0) to (23,7) to (51,7) to (51,0);
\draw (35,0) to (35,2) to (43,2) to (43,0);
\draw (5,0) to (5,3) to (15,3) to (15,0);
\draw (31,0) to (31,4) to (45,4) to (45,0);
\draw (37,0) to (37,1) to (41,1) to (41,0);
\draw (29,0) to (29,5) to (47,5) to (47,0);
\draw (21,0) to (21,8) to (53,8) to (53,-9);
\draw (25,0) to (25,-1) to (29,-1) to (29,0);
\draw (23,0) to (23,-2) to (31,-2) to (31,0);
\draw (9,0) to (9,-8) to (47,-8) to (47,0);
\draw (5,0) to (5,-9) to (49,-9) to (49,0);
\draw (21,0) to (21,-3) to (35,-3) to (35,0);
\draw (1,0) to (1,-10) to (51,-10) to (51,0);
\draw (15,0) to (15,-5) to (41,-5) to (41,0);
\draw (19,0) to (19,-4) to (37,-4) to (37,0);
\draw (11,0) to (11,-7) to (45,-7) to (45,0);
\draw (13,0) to (13,-6) to (43,-6) to (43,0);
\end{tikzpicture}\\
\hline
\begin{tikzpicture}[scale =.35]		
\node(7) at (-2,0) {$\cdot$};
\node(1) at (0,0) {$\cdot$};
\node(2) at (2,0) {$\cdot$};
\node(3) at (4,0) {$\cdot$};
\node(4) at (6,0) {$\cdot$};
\node(5) at (8,0) {$\cdot$};
\node(6) at (4,-2) {$\cdot$};
\path[<-](2) edge (3);
\path[->](3) edge (6);
\path[<-](7) edge (1);
\path[<-](1) edge (2);
\path[<-](3) edge (4);
\path[->](4) edge (5);
\end{tikzpicture}&
$\left(\begin{smallmatrix}
1&2&3&4&5&6&7 \\
 5& 7& 6& 4 &3&2&1\\
\end{smallmatrix}\right)$ &\begin{tikzpicture}[rounded corners,scale=.1]
\node at (49,0){$\cdot$};
\node at (47,0){$\cdot$};
\node at (45,0){$\cdot$};
\node at (33,0){$\cdot$};
\node at (9,0){$\cdot$};
\node at (25,0){$\cdot$};
\node at (23,0){$\cdot$};
\draw (3,0) to (3,3) to (15,3) to (15,0);
\draw (21,0) to (21,5) to (41,5) to (41,0);
\draw (1,-9) to (1,4) to (17,4) to (17,0);
\draw (29,0) to (29,2) to (37,2) to (37,0);
\draw (5,0) to (5,2) to (13,2) to (13,0);
\draw (27,0) to (27,3) to (39,3) to (39,0);
\draw (7,0) to (7,1) to (11,1) to (11,0);
\draw (31,0) to (31,1) to (35,1) to (35,0);
\draw (43,0) to (43,2) to (51,2) to (51,0);
\draw (19,0) to (19,6) to (53,6) to (53,0);
\draw (19,0) to (19,-3) to (31,-3) to (31,0);
\draw (5,0) to (5,-9) to (51,-9) to (51,0);
\draw (11,0) to (11,-7) to (41,-7) to (41,0);
\draw (21,0) to (21,-2) to (29,-2) to (29,0);
\draw (3,0) to (3,-10) to (53,-10) to (53,0);
\draw (17,0) to (17,-4) to (35,-4) to (35,0);
\draw (7,0) to (7,-8) to (43,-8) to (43,0);
\draw (23,0) to (23,-1) to (27,-1) to (27,0);
\draw (15,0) to (15,-5) to (37,-5) to (37,0);
\draw (13,0) to (13,-6) to (39,-6) to (39,0);
\end{tikzpicture}\\
\hline
\end{tabular}

\caption{Given a quiver and a permutation in $P_Q$, the given non-self-crossing admissible curve~$\g$ is non-decreasing and $\a_\pi(\g) ={1\,\,2\,\,3\,\,3\,\,2\,\,1}\atop {\hspace{1.3cm}1}$.}
\label{E_7_table_1233211_1}
\end{table}

\begin{table}[t]\centering
\begin{tabular}{c|c|c}
\hline
Quiver, $Q$ & $\pi \in P_Q$ & $\gamma \in \Gamma_{\pi, nd}$\\
\hline

\hline
\begin{tikzpicture}[scale =.35]		
\node(7) at (-2,0) {$\cdot$};
\node(1) at (0,0) {$\cdot$};
\node(2) at (2,0) {$\cdot$};
\node(3) at (4,0) {$\cdot$};
\node(4) at (6,0) {$\cdot$};
\node(5) at (8,0) {$\cdot$};
\node(6) at (4,-2) {$\cdot$};
\path[<-](2) edge (3);
\path[->](3) edge (6);

\path[->](7) edge (1);
\path[->](1) edge (2);
\path[->](3) edge (4);
\path[<-](4) edge (5);
\end{tikzpicture}&
$\left(\begin{smallmatrix}
1&2&3&4&5&6&7 \\
1& 2& 3&7&6&5&4\\
\end{smallmatrix}\right)$ &
\begin{tikzpicture}[rounded corners,scale=.1]
\node at (-13,0){$\cdot$};
\node at (-7,0){$\cdot$};
\node at (-3,0){$\cdot$};
\node at (23,0){$\cdot$};
\node at (15,0){$\cdot$};
\node at (13,0){$\cdot$};
\node at (11,0){$\cdot$};
\draw (25,0) to (25,1) to (21,1) to (21,0);
\draw (31,0) to (31,6) to (9,6) to (9,0);
\draw (27,0) to (27,2) to (19,2) to (19,0);
\draw (3,0) to (3,5) to (-17,5) to (-17,0);
\draw (33,0) to (33,8) to (7,8) to (7,0);
\draw (5,0) to (5,7) to (-19,7) to (-19,-9);
\draw (-11,0) to (-11,1) to (-15,1) to (-15,0);
\draw (29,0) to (29,4) to (17,4) to (17,0);
\draw (-1,0) to (-1,1) to (-5,1) to (-5,0);
\draw (1,0) to (1,3) to (-9,3) to (-9,0);
\draw (31,0) to (31,-8) to (-15,-8) to (-15,0);
\draw (-1,0) to (-1,-3) to (-11,-3) to (-11,0);
\draw (19,0) to (19,-3) to (9,-3) to (9,0);
\draw (21,0) to (21,-4) to (7,-4) to (7,0);
\draw (25,0) to (25,-5) to (5,-5) to (5,0);
\draw (33,0) to (33,-9) to (-17,-9) to (-17,0);
\draw (27,0) to (27,-6) to (3,-6) to (3,0);
\draw (17,0) to (17,-2) to (11,-2) to (11,0);
\draw (-5,0) to (-5,-1) to (-9,-1) to (-9,0);
\draw (29,0) to (29,-7) to (1,-7) to (1,0);
\end{tikzpicture}
\\

\hline
\begin{tikzpicture}[scale =.35]		
\node(7) at (-2,0) {$\cdot$};
\node(1) at (0,0) {$\cdot$};
\node(2) at (2,0) {$\cdot$};
\node(3) at (4,0) {$\cdot$};
\node(4) at (6,0) {$\cdot$};
\node(5) at (8,0) {$\cdot$};
\node(6) at (4,-2) {$\cdot$};
\path[<-](2) edge (3);
\path[->](3) edge (6);

\path[<-](7) edge (1);
\path[->](1) edge (2);
\path[->](3) edge (4);
\path[<-](4) edge (5);
\end{tikzpicture}&
$\left(\begin{smallmatrix}
1&2&3&4&5&6&7 \\
1& 3&7&6&5&4&2\\
\end{smallmatrix}\right)$ &
\begin{tikzpicture}[rounded corners,scale=.1]
\node at (-11,0){$\cdot$};
\node at (29,0){$\cdot$};
\node at (-9,0){$\cdot$};
\node at (17,0){$\cdot$};
\node at (9,0){$\cdot$};
\node at (7,0){$\cdot$};
\node at (5,0){$\cdot$};
\draw (-7,0) to (-7,2) to (-13,2) to (-13,0);
\draw (21,0) to (21,3) to (13,3) to (13,0);
\draw (25,0) to (25,8) to (3,8) to (3,0);
\draw (-3,0) to (-3,6) to (-17,6) to (-17,0);
\draw (23,0) to (23,5) to (11,5) to (11,0);
\draw (-5,0) to (-5,4) to (-15,4) to (-15,0);
\draw (31,0) to (31,1) to (27,1) to (27,0);
\draw (-1,0) to (-1,7) to (-19,7) to (-19,-9);
\draw (19,0) to (19,1) to (15,1) to (15,0);
\draw (33,0) to (33,9) to (1,9) to (1,0);
\draw (23,0) to (23,-6) to (-5,-6) to (-5,0);
\draw (31,0) to (31,-9) to (-15,-9) to (-15,0);
\draw (27,0) to (27,-8) to (-13,-8) to (-13,0);
\draw (19,0) to (19,-4) to (-1,-4) to (-1,0);
\draw (11,0) to (11,-1) to (5,-1) to (5,0);
\draw (21,0) to (21,-5) to (-3,-5) to (-3,0);
\draw (25,0) to (25,-7) to (-7,-7) to (-7,0);
\draw (13,0) to (13,-2) to (3,-2) to (3,0);
\draw (33,0) to (33,-10) to (-17,-10) to (-17,0);
\draw (15,0) to (15,-3) to (1,-3) to (1,0);
\end{tikzpicture}
\\

\hline
\begin{tikzpicture}[scale =.35]		
\node(7) at (-2,0) {$\cdot$};
\node(1) at (0,0) {$\cdot$};
\node(2) at (2,0) {$\cdot$};
\node(3) at (4,0) {$\cdot$};
\node(4) at (6,0) {$\cdot$};
\node(5) at (8,0) {$\cdot$};
\node(6) at (4,-2) {$\cdot$};
\path[<-](2) edge (3);
\path[->](3) edge (6);

\path[->](7) edge (1);
\path[<-](1) edge (2);
\path[->](3) edge (4);
\path[<-](4) edge (5);
\end{tikzpicture}&
$\left(\begin{smallmatrix}
1&2&3&4&5&6&7 \\
1& 2&7&6&5&4&3\\
\end{smallmatrix}\right)$ &
\begin{tikzpicture}[rounded corners,scale=.1]
\node at (5,0){$\cdot$};
\node at (7,0){$\cdot$};
\node at (29,0){$\cdot$};
\node at (21,0){$\cdot$};
\node at (13,0){$\cdot$};
\node at (9,0){$\cdot$};
\node at (11,0){$\cdot$};
\draw (33,0) to (33,6) to (-1,6) to (-1,0);
\draw (9,0) to (9,2) to (3,2) to (3,0);
\draw (31,0) to (31,5) to (1,5) to (1,0);
\draw (27,0) to (27,4) to (15,4) to (15,0);
\draw (25,0) to (25,3) to (17,3) to (17,0);
\draw (23,-9) to (23,1) to (19,1) to (19,0);
\draw (19,0) to (19,-5) to (-1,-5) to (-1,0);
\draw (15,0) to (15,-3) to (3,-3) to (3,0);
\draw (17,0) to (17,-4) to (1,-4) to (1,0);
\draw (33,0) to (33,-2) to (25,-2) to (25,0);
\draw (31,0) to (31,-1) to (27,-1) to (27,0);
\end{tikzpicture}
\\

\hline
\begin{tikzpicture}[scale =.35]		
\node(7) at (-2,0) {$\cdot$};
\node(1) at (0,0) {$\cdot$};
\node(2) at (2,0) {$\cdot$};
\node(3) at (4,0) {$\cdot$};
\node(4) at (6,0) {$\cdot$};
\node(5) at (8,0) {$\cdot$};
\node(6) at (4,-2) {$\cdot$};
\path[<-](2) edge (3);
\path[->](3) edge (6);

\path[->](7) edge (1);
\path[->](1) edge (2);
\path[<-](3) edge (4);
\path[<-](4) edge (5);
\end{tikzpicture}&
$\left(\begin{smallmatrix}
1&2&3&4&5&6&7 \\
1&2&3&5&7&6&4
\end{smallmatrix}\right)$ &
\begin{tikzpicture}[rounded corners,scale=.1]
\node at (3,0){$\cdot$};
\node at (5,0){$\cdot$};
\node at (13,0){$\cdot$};
\node at (27,0){$\cdot$};
\node at (15,0){$\cdot$};
\node at (23,0){$\cdot$};
\node at (17,0){$\cdot$};
\draw (33,0) to (33,5) to (1,5) to (1,0);
\draw (29,0) to (29,1) to (25,1) to (25,0);
\draw (19,0) to (19,2) to (11,2) to (11,0);
\draw (31,0) to (31,4) to (7,4) to (7,0);
\draw (21,0) to (21,3) to (9,3) to (9,-9);
\draw (25,0) to (25,-1) to (21,-1) to (21,0);
\draw (29,0) to (29,-3) to (19,-3) to (19,0);
\draw (31,0) to (31,-4) to (17,-4) to (17,0);
\draw (7,0) to (7,-2) to (1,-2) to (1,0);
\draw (33,0) to (33,-5) to (11,-5) to (11,0);
\end{tikzpicture}
\\

\hline
\begin{tikzpicture}[scale =.35]		
\node(7) at (-2,0) {$\cdot$};
\node(1) at (0,0) {$\cdot$};
\node(2) at (2,0) {$\cdot$};
\node(3) at (4,0) {$\cdot$};
\node(4) at (6,0) {$\cdot$};
\node(5) at (8,0) {$\cdot$};
\node(6) at (4,-2) {$\cdot$};
\path[<-](2) edge (3);
\path[->](3) edge (6);

\path[<-](7) edge (1);
\path[<-](1) edge (2);
\path[->](3) edge (4);
\path[<-](4) edge (5);
\end{tikzpicture}&
$\left(\begin{smallmatrix}
1&2&3&4&5&6&7 \\
1&7&6&5&4&3&2
\end{smallmatrix}\right)$ &
\begin{tikzpicture}[rounded corners,scale=.1]
\node at (-1,0){$\cdot$};
\node at (31,0){$\cdot$};
\node at (25,0){$\cdot$};
\node at (19,0){$\cdot$};
\node at (13,0){$\cdot$};
\node at (11,0){$\cdot$};
\node at (9,0){$\cdot$};
\draw (3,0) to (3,2) to (-5,2) to (-5,-9);
\draw (21,0) to (21,1) to (17,1) to (17,0);
\draw (27,0) to (27,3) to (9,3) to (9,0);
\draw (1,0) to (1,1) to (-3,1) to (-3,0);
\draw (29,0) to (29,4) to (7,4) to (7,0);
\draw (33,0) to (33,5) to (5,5) to (5,0);
\draw (23,0) to (23,2) to (15,2) to (15,0);
\draw (17,0) to (17,-3) to (5,-3) to (5,0);
\draw (15,0) to (15,-2) to (7,-2) to (7,0);
\draw (33,0) to (33,-6) to (-3,-6) to (-3,0);
\draw (27,0) to (27,-1) to (23,-1) to (23,0);
\draw (29,0) to (29,-5) to (1,-5) to (1,0);
\draw (21,0) to (21,-4) to (3,-4) to (3,0);
\end{tikzpicture}
\\

\hline
\begin{tikzpicture}[scale =.35]		
\node(7) at (-2,0) {$\cdot$};
\node(1) at (0,0) {$\cdot$};
\node(2) at (2,0) {$\cdot$};
\node(3) at (4,0) {$\cdot$};
\node(4) at (6,0) {$\cdot$};
\node(5) at (8,0) {$\cdot$};
\node(6) at (4,-2) {$\cdot$};
\path[<-](2) edge (3);
\path[->](3) edge (6);

\path[<-](7) edge (1);
\path[->](1) edge (2);
\path[<-](3) edge (4);
\path[<-](4) edge (5);
\end{tikzpicture}&
$\left(\begin{smallmatrix}
1&2&3&4&5&6&7 \\
1&3&5&7&6&4&2
\end{smallmatrix}\right)$ &
\begin{tikzpicture}[rounded corners,scale=.1]
\node at (-1,0){$\cdot$};
\node at (29,0){$\cdot$};
\node at (7,0){$\cdot$};
\node at (15,0){$\cdot$};
\node at (9,0){$\cdot$};
\node at (19,0){$\cdot$};
\node at (11,0){$\cdot$};
\draw (21,0) to (21,4) to (1,4) to (1,0);
\draw (23,0) to (23,5) to (-3,5) to (-3,0);
\draw (13,0) to (13,2) to (5,2) to (5,0);
\draw (17,0) to (17,3) to (3,3) to (3,0);
\draw (33,-9) to (33,2) to (25,2) to (25,0);
\draw (31,0) to (31,1) to (27,1) to (27,0);
\draw (21,0) to (21,-2) to (11,-2) to (11,0);
\draw (27,0) to (27,-5) to (1,-5) to (1,0);
\draw (23,0) to (23,-3) to (5,-3) to (5,0);
\draw (17,0) to (17,-1) to (13,-1) to (13,0);
\draw (25,0) to (25,-4) to (3,-4) to (3,0);
\draw (31,0) to (31,-6) to (-3,-6) to (-3,0);
\end{tikzpicture}
\\

\hline
\begin{tikzpicture}[scale =.35]		
\node(7) at (-2,0) {$\cdot$};
\node(1) at (0,0) {$\cdot$};
\node(2) at (2,0) {$\cdot$};
\node(3) at (4,0) {$\cdot$};
\node(4) at (6,0) {$\cdot$};
\node(5) at (8,0) {$\cdot$};
\node(6) at (4,-2) {$\cdot$};
\path[<-](2) edge (3);
\path[->](3) edge (6);

\path[->](7) edge (1);
\path[<-](1) edge (2);
\path[<-](3) edge (4);
\path[<-](4) edge (5);
\end{tikzpicture}&
$\left(\begin{smallmatrix}
1&2&3&4&5&6&7 \\
1&2&5&7&6&4&3
\end{smallmatrix}\right)$ &
\begin{tikzpicture}[rounded corners,scale=.1]
\node at (-13,0){$\cdot$};
\node at (-9,0){$\cdot$};
\node at (29,0){$\cdot$};
\node at (21,0){$\cdot$};
\node at (-5,0){$\cdot$};
\node at (13,0){$\cdot$};
\node at (11,0){$\cdot$};
\draw (-3,0) to (-3,1) to (-7,1) to (-7,0);
\draw (31,0) to (31,7) to (9,7) to (9,0);
\draw (-1,0) to (-1,3) to (-11,3) to (-11,0);
\draw (3,0) to (3,6) to (-17,6) to (-17,0);
\draw (33,0) to (33,9) to (7,9) to (7,0);
\draw (1,0) to (1,5) to (-15,5) to (-15,0);
\draw (5,0) to (5,8) to (-19,8) to (-19,-9);
\draw (27,0) to (27,4) to (15,4) to (15,0);
\draw (25,0) to (25,2) to (17,2) to (17,0);
\draw (23,0) to (23,1) to (19,1) to (19,0);
\draw (19,0) to (19,-3) to (7,-3) to (7,0);
\draw (31,0) to (31,-7) to (-1,-7) to (-1,0);
\draw (27,0) to (27,-6) to (1,-6) to (1,0);
\draw (-3,0) to (-3,-3) to (-15,-3) to (-15,0);
\draw (23,0) to (23,-4) to (5,-4) to (5,0);
\draw (-7,0) to (-7,-1) to (-11,-1) to (-11,0);
\draw (17,0) to (17,-2) to (9,-2) to (9,0);
\draw (33,0) to (33,-8) to (-17,-8) to (-17,0);
\draw (25,0) to (25,-5) to (3,-5) to (3,0);
\draw (15,0) to (15,-1) to (11,-1) to (11,0);
\end{tikzpicture}

\\

\hline
\begin{tikzpicture}[scale =.35]		
\node(7) at (-2,0) {$\cdot$};
\node(1) at (0,0) {$\cdot$};
\node(2) at (2,0) {$\cdot$};
\node(3) at (4,0) {$\cdot$};
\node(4) at (6,0) {$\cdot$};
\node(5) at (8,0) {$\cdot$};
\node(6) at (4,-2) {$\cdot$};
\path[<-](2) edge (3);
\path[->](3) edge (6);

\path[<-](7) edge (1);
\path[<-](1) edge (2);
\path[<-](3) edge (4);
\path[<-](4) edge (5);
\end{tikzpicture}&
$\left(\begin{smallmatrix}
1&2&3&4&5&6&7 \\
1&5&7&4&6&3&2
\end{smallmatrix}\right)$ &
\begin{tikzpicture}[rounded corners,scale=.1]
\node at (1,0){$\cdot$};
\node at (29,0){$\cdot$};
\node at (25,0){$\cdot$};
\node at (13,0){$\cdot$};
\node at (9,0){$\cdot$};
\node at (21,0){$\cdot$};
\node at (11,0){$\cdot$};
\draw (33,0) to (33,5) to (-1,5) to (-1,0);
\draw (31,0) to (31,4) to (3,4) to (3,0);
\draw (19,0) to (19,3) to (5,3) to (5,-9);
\draw (15,0) to (15,1) to (11,1) to (11,0);
\draw (27,0) to (27,1) to (23,1) to (23,0);
\draw (17,0) to (17,2) to (7,2) to (7,0);
\draw (23,0) to (23,-1) to (19,-1) to (19,0);
\draw (3,0) to (3,-1) to (-1,-1) to (-1,0);
\draw (33,0) to (33,-4) to (7,-4) to (7,0);
\draw (31,0) to (31,-3) to (15,-3) to (15,0);
\draw (27,0) to (27,-2) to (17,-2) to (17,0);
\end{tikzpicture}
\end{tabular}

\caption{Given a quiver and a permutation in $P_Q$, the given non-self-crossing admissible curve~$\g$ is non-decreasing and $\a_\pi(\g) ={1\,\,2\,\,3\,\,3\,\,2\,\,1}\atop {\hspace{1.3cm}1}$.}\label{E_7_table_1233211_2}
\end{table}

\begin{Lem}\label{lem_e_7_root_1223211}
Let $Q \in \cE_7$ and $\a =\begin{smallmatrix}
1&2 &2 &3&2 & 1 \\
&&&1&&
\end{smallmatrix}.$ Then $\a \in \bigcup_{\pi\in P_Q} \{\a_\pi(\g)\,|\, \g\in \G_{\pi,nd}\}$.
\end{Lem}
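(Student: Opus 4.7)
The proof follows the template of Lemma~\ref{lem_e_7_root_1233211}. A direct computation yields
\[
s_1\a = s_2\a = s_5\a = \a, \quad s_3\a = \a-\a_3, \quad s_7\a = \a-\a_7, \quad s_4\a = \a+\a_4, \quad s_6\a = \a+\a_6.
\]
Since $h(\a)=12>7$, both $c_\pi\a$ and $c_\pi^{-1}\a$ remain positive for every $\pi\in P_Q$. Two distinct indices ($4$ and $6$) strictly increase $\a$, so Lemma~\ref{lem_reducing_root} does not apply verbatim; the argument is carried out case-by-case on the orientation of~$Q$.

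For a fixed orientation, the aim is to choose $\pi \in P_Q$ so that, in the Coxeter word $c_\pi=s_{\pi(1)}\cdots s_{\pi(7)}$ (whose rightmost factor acts first), the decreasing reflections $s_3$ and $s_7$ are applied before the increasing reflections $s_4$ and $s_6$. A direct check gives $s_4s_3\a=\a-\a_3$ and $s_6s_7\a=\a-\a_7$, and by tracking the coefficients of each simple root along $c_\pi$ one obtains $c_\pi\a<_D\a$ (or $c_\pi^{-1}\a<_D\a$, after switching to the inverse word) whenever the orientations of the edges at the trivalent vertex~$7$ force $\pi^{-1}(3)>\pi^{-1}(4)$ and $\pi^{-1}(7)>\pi^{-1}(6)$, or the reversed pair of inequalities. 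In each such case the reduced root is either supported on a proper full subquiver of~$Q$---necessarily of type $A$, $D$, or~$E_6$, and hence in $\bigcup_{\pi'}\{\a_{\pi'}(\g)\,|\,\g\in\G_{\pi',nd}\}$ by Propositions~\ref{prop_a_non_dec}, \ref{prop_non_dec_curve_d}, \ref{prop_e_6} combined with Lemma~\ref{lem_subquiver} and Remark~\ref{rmk_subquiver}---or equals $\begin{smallmatrix}1&2&3&3&2&1\\&&&1&&\end{smallmatrix}$, handled in Lemma~\ref{lem_e_7_root_1233211}. Lemma~\ref{lem_cox_transf} then lifts the conclusion back to~$\a$.

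For the remaining orientations---those in which neither $c_\pi\a$ nor $c_\pi^{-1}\a$ is comparable with~$\a$ for any admissible $\pi$, which arise from certain arrow configurations around the trivalent vertex---use Sage to locate a permutation $\pi\in P_Q$ together with an explicit non-decreasing non-self-crossing admissible curve~$\g$ satisfying $\a_\pi(\g)=\a$. Pairs of orientations related by simultaneous reversal of every arrow give rise to mirror-image curves, so only half of these cases need to be produced by hand. The resulting data is collected in tables analogous to Tables~\ref{E_7_table_1233211_1} and~\ref{E_7_table_1233211_2}. The main obstacle is precisely this final enumeration step: no uniform monotonicity argument covers the recalcitrant orientations, so each exhibited curve must be verified crossing-by-crossing against the non-decreasing condition.
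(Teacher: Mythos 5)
Your skeleton matches the paper's: compute $s_i\a$ for each $i$ (your values agree with the paper's), exploit the ordering of the reflections inside $c_\pi$ to get $c_\pi\a<_D\a$ or $c_\pi^{-1}\a<_D\a$ for favorable orientations via the identities $s_4s_3\a=\a-\a_3$ and $s_6s_7\a=\a-\a_7$, and fall back on computer-found curves for the rest. But the step where you justify that the reduced root already lies in $\bigcup_{\pi}\{\a_\pi(\g)\,|\,\g\in\G_{\pi,nd}\}$ is wrong. You claim $c_\pi\a$ is either supported on a proper full subquiver or equals $\begin{smallmatrix}1&2&3&3&2&1\\&&&1&&\end{smallmatrix}$. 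The second alternative is impossible: that root satisfies $\begin{smallmatrix}1&2&3&3&2&1\\&&&1&&\end{smallmatrix}>_D\a$, so it cannot arise once $c_\pi\a<_D\a$. And the bound your mechanism actually yields is $c_\pi\a\leq_D\a-\a_3-\a_7=\begin{smallmatrix}1&1&2&2&2&1\\&&&1&&\end{smallmatrix}$, a root with \emph{full} support; the full-support roots below it are of the form $\a_2+(\text{root of the $E_6$ subquiver})$ and are handled by Lemma~\ref{lem_1_at_leaves}, not by Lemma~\ref{lem_subquiver}. You also omit the paper's second reduction, $s_6s_4s_7\a=\a-\a_7$ (once $s_7$ acts before both increasing reflections $s_4$ and $s_6$, they become harmless), which lands on $\begin{smallmatrix}1&2&2&2&2&1\\&&&1&&\end{smallmatrix}$ --- one of the six roots of Lemma~\ref{lem_E_7_6_roots}, disposed of only by the height induction in Proposition~\ref{prop_e_7}. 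Without Lemma~\ref{lem_1_at_leaves} and that induction, the appeal to Lemma~\ref{lem_cox_transf} has nothing to stand on.

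The second gap is the terminal one: for the orientations where no Coxeter reduction applies (a strictly larger set in your setup, since you use only one of the two reduction mechanisms), you merely assert that Sage will locate a permutation $\pi\in P_Q$ and a non-decreasing non-self-crossing curve $\g$ with $\a_\pi(\g)=\a$. For those orientations the existence of such a curve \emph{is} the content of the lemma; the paper discharges it by exhibiting the curves explicitly in Table~\ref{E_7_table_12232111}, one per orientation, each checkable crossing-by-crossing. A promise that a search will succeed, with no data, leaves the lemma unproved exactly where it is hardest.
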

\begin{proof}Similarly as the proof of Lemma \ref{lem_e_7_root_1233211}, we look at different cases of quivers. Note that $s_7\a = \a-\a_7$, $s_3\a=\a-\a_3$, $s_6\a=\a+\a_6$, $s_4\a = \a+\a_4$, and $s_i\a = \a$ for other $i$. As $s_6s_4s_7\a = s_4s_6s_7\a = \a - \a_7$, if $\pi^{-1}(7) < \pi^{-1}(4)$ and $\pi^{-1}(7) < \pi^{-1}(6)$, then $c_\pi\a \leq_D\begin{smallmatrix} 1&2&2&2&2&1\\ &&&1&&\end{smallmatrix}$. Similarly, if $\pi^{-1}(7) > \pi^{-1}(4)$ and $\pi^{-1}(7) > \pi^{-1}(6)$, then $c_\pi^{-1}\a \leq_D\begin{smallmatrix} 1&2&2&2&2&1\\ &&&1&&\end{smallmatrix}$.

Note that $s_4s_3\a = \a-\a_3$ and $s_6s_7\a=\a-\a_7$. Thus if $\pi^{-1}(3) < \pi^{-1}(4)$ and $\pi^{-1}(7) < \pi^{-1}(6)$, then $c_\pi\a \leq_D\begin{smallmatrix} 1&1&2&2&2&1\\ &&&1&&\end{smallmatrix}$. Similarly, if $\pi^{-1}(3) > \pi^{-1}(4)$ and $\pi^{-1}(7) > \pi^{-1}(6)$, $c_\pi^{-1}\a \leq_D\begin{smallmatrix} 1&1&2&2&2&1\\ &&&1&&\end{smallmatrix}$.

Now consider quivers of the form:
\begin{center}
\begin{tikzpicture}[scale =.7]		
\node[shape=circle,draw=black, minimum size = .4cm, inner sep = 4pt, outer sep = 0.5pt](7) at (-2,0) {$2$};
\node[shape=circle,draw=black, minimum size = .4cm, inner sep = 4pt, outer sep = 0.5pt](1) at (0,0) {$3$};
\node[shape=circle,draw=black, minimum size = .4cm, inner sep = 4pt, outer sep = 0.5pt](2) at (2,0) {$4$};
\node[shape=circle,draw=black, minimum size = .4cm, inner sep = 4pt, outer sep = 0.5pt](3) at (4,0) {$7$};
\node[shape=circle,draw=black, minimum size = .4cm, inner sep = 4pt, outer sep = 0.5pt](4) at (6,0) {$5$};
\node[shape=circle,draw=black, minimum size = .4cm, inner sep = 4pt, outer sep = 0.5pt](5) at (8,0) {$1$};
\node[shape=circle,draw=black, minimum size = .4cm, inner sep = 4pt, outer sep = 0.5pt](6) at (4,-2) {$6$};

\path[-](7) edge (1);
\path[<-](1) edge (2);
\path[->](2) edge (3);
\path[-](3) edge (4);
\path[-](4) edge (5);
\path[->](3) edge (6);
\end{tikzpicture}
\end{center}

Quivers of this form do not fit the descriptions above. Also $c_\pi\a \not<_D\a$ and $c_\pi^{-1}\a \not<_D\a$. We use Sage to find non-decreasing non-self-crossing admissible curves whose associated root is $\begin{smallmatrix}1&2&2&3&2&1\\ &&&1&& \end{smallmatrix} $. Refer to Table~\ref{E_7_table_12232111} for these curves.
\end{proof}

\begin{table}[t]\centering
\begin{tabular}{c|c|c}
\hline
Quiver, $Q$ & $\pi \in P_Q$ & $\gamma \in \Gamma_{\pi, nd}$\\
\hline
\begin{tikzpicture}[scale =.35]		
\node(7) at (-2,0) {$\cdot$};
\node(1) at (0,0) {$\cdot$};
\node(2) at (2,0) {$\cdot$};
\node(3) at (4,0) {$\cdot$};
\node(4) at (6,0) {$\cdot$};
\node(5) at (8,0) {$\cdot$};
\node(6) at (4,-2) {$\cdot$};

\path[<-](1) edge (2);
\path[->](2) edge (3);
\path[->](3) edge (6);

\path[->](7) edge (1);
\path[->](3) edge (4);
\path[->](4) edge (5);
\end{tikzpicture}&
$\left(\begin{smallmatrix}
1&2&3&4&5&6&7 \\
2& 4& 7& 6& 5& 3 &1\\
\end{smallmatrix}\right)$
&\begin{tikzpicture}[rounded corners,scale=.1]
\node at (31,0){$\cdot$};
\node at (5,0){$\cdot$};
\node at (29,0){$\cdot$};
\node at (11,0){$\cdot$};
\node at (19,0){$\cdot$};
\node at (17,0){$\cdot$};
\node at (13,0){$\cdot$};
\draw (9,0) to (9,1) to (13,1) to (13,0);
\draw (27,0) to (27,2) to (33,2) to (33,0);
\draw (7,0) to (7,3) to (15,3) to (15,0);
\draw (1,0) to (1,6) to (23,6) to (23,0);
\draw (3,0) to (3,5) to (21,5) to (21,0);
\draw (25,0) to (25,4) to (35,4) to (35,-9);
\draw (1,0) to (1,-5) to (33,-5) to (33,0);
\draw (7,0) to (7,-3) to (25,-3) to (25,0);
\draw (15,0) to (15,-1) to (21,-1) to (21,0);
\draw (3,0) to (3,-4) to (27,-4) to (27,0);
\draw (9,0) to (9,-2) to (23,-2) to (23,0);
\end{tikzpicture}
\\
\hline
\begin{tikzpicture}[scale =.35]		
\node(7) at (-2,0) {$\cdot$};
\node(1) at (0,0) {$\cdot$};
\node(2) at (2,0) {$\cdot$};
\node(3) at (4,0) {$\cdot$};
\node(4) at (6,0) {$\cdot$};
\node(5) at (8,0) {$\cdot$};
\node(6) at (4,-2) {$\cdot$};

\path[<-](1) edge (2);
\path[->](2) edge (3);
\path[->](3) edge (6);

\path[<-](7) edge (1);
\path[->](3) edge (4);
\path[->](4) edge (5);
\end{tikzpicture}&
$\left(\begin{smallmatrix}
1&2&3&4&5&6&7 \\
4& 7& 6& 5& 3 &2&1\\
\end{smallmatrix}\right)$
&\begin{tikzpicture}[rounded corners,scale=.1]
\node at (35,0){$\cdot$};
\node at (33,0){$\cdot$};
\node at (29,0){$\cdot$};
\node at (9,0){$\cdot$};
\node at (17,0){$\cdot$};
\node at (15,0){$\cdot$};
\node at (11,0){$\cdot$};
\draw (1,0) to (1,5) to (21,5) to (21,0);
\draw (25,0) to (25,3) to (37,3) to (37,0);
\draw (5,0) to (5,2) to (13,2) to (13,0);
\draw (3,0) to (3,4) to (19,4) to (19,0);
\draw (27,0) to (27,1) to (31,1) to (31,0);
\draw (7,0) to (7,1) to (11,1) to (11,0);
\draw (23,0) to (23,4) to (39,4) to (39,-9);
\draw (3,0) to (3,-4) to (25,-4) to (25,0);
\draw (31,0) to (31,-1) to (37,-1) to (37,0);
\draw (1,0) to (1,-5) to (27,-5) to (27,0);
\draw (7,0) to (7,-2) to (21,-2) to (21,0);
\draw (13,0) to (13,-1) to (19,-1) to (19,0);
\draw (5,0) to (5,-3) to (23,-3) to (23,0);
\end{tikzpicture}
\\
\hline
\begin{tikzpicture}[scale =.35]		
\node(7) at (-2,0) {$\cdot$};
\node(1) at (0,0) {$\cdot$};
\node(2) at (2,0) {$\cdot$};
\node(3) at (4,0) {$\cdot$};
\node(4) at (6,0) {$\cdot$};
\node(5) at (8,0) {$\cdot$};
\node(6) at (4,-2) {$\cdot$};

\path[<-](1) edge (2);
\path[->](2) edge (3);
\path[->](3) edge (6);

\path[->](7) edge (1);
\path[<-](3) edge (4);
\path[->](4) edge (5);
\end{tikzpicture}&
$\left(\begin{smallmatrix}
1&2&3&4&5&6&7 \\
2&4& 5&7& 6& 3 &1\\
\end{smallmatrix}\right)$
&\begin{tikzpicture}[rounded corners,scale=.1]
\node at (31,0){$\cdot$};
\node at (7,0){$\cdot$};
\node at (29,0){$\cdot$};
\node at (9,0){$\cdot$};
\node at (11,0){$\cdot$};
\node at (19,0){$\cdot$};
\node at (13,0){$\cdot$};
\draw (1,0) to (1,4) to (21,4) to (21,0);
\draw (23,0) to (23,3) to (37,3) to (37,-9);
\draw (5,0) to (5,2) to (15,2) to (15,0);
\draw (27,0) to (27,1) to (33,1) to (33,0);
\draw (25,0) to (25,2) to (35,2) to (35,0);
\draw (3,0) to (3,3) to (17,3) to (17,0);
\draw (1,0) to (1,-6) to (35,-6) to (35,0);
\draw (15,0) to (15,-2) to (23,-2) to (23,0);
\draw (13,0) to (13,-3) to (25,-3) to (25,0);
\draw (3,0) to (3,-5) to (33,-5) to (33,0);
\draw (17,0) to (17,-1) to (21,-1) to (21,0);
\draw (5,0) to (5,-4) to (27,-4) to (27,0);
\end{tikzpicture}\\

\hline
\begin{tikzpicture}[scale =.35]		
\node(7) at (-2,0) {$\cdot$};
\node(1) at (0,0) {$\cdot$};
\node(2) at (2,0) {$\cdot$};
\node(3) at (4,0) {$\cdot$};
\node(4) at (6,0) {$\cdot$};
\node(5) at (8,0) {$\cdot$};
\node(6) at (4,-2) {$\cdot$};

\path[<-](1) edge (2);
\path[->](2) edge (3);
\path[->](3) edge (6);

\path[<-](7) edge (1);
\path[<-](3) edge (4);
\path[->](4) edge (5);
\end{tikzpicture}&
$\left(\begin{smallmatrix}
1&2&3&4&5&6&7 \\
4&5& 7& 6& 3 &2&1\\
\end{smallmatrix}\right)$
&
\begin{tikzpicture}[rounded corners,scale=.1]
\node at (35,0){$\cdot$};
\node at (33,0){$\cdot$};
\node at (29,0){$\cdot$};
\node at (7,0){$\cdot$};
\node at (11,0){$\cdot$};
\node at (27,0){$\cdot$};
\node at (19,0){$\cdot$};
\draw (5,0) to (5,1) to (9,1) to (9,0);
\draw (15,0) to (15,2) to (23,2) to (23,0);
\draw (1,0) to (1,5) to (37,5) to (37,0);
\draw (13,0) to (13,3) to (25,3) to (25,0);
\draw (3,0) to (3,4) to (31,4) to (31,0);
\draw (17,0) to (17,1) to (21,1) to (21,0);
\draw (9,0) to (9,-1) to (13,-1) to (13,0);
\draw (23,0) to (23,-4) to (37,-4) to (37,0);
\draw (1,0) to (1,-5) to (19,-5) to (19,0);
\draw (25,0) to (25,-2) to (31,-2) to (31,0);
\draw (5,0) to (5,-3) to (15,-3) to (15,0);
\draw (3,0) to (3,-4) to (17,-4) to (17,0);
\end{tikzpicture}
\\
\hline
\begin{tikzpicture}[scale =.35]		
\node(7) at (-2,0) {$\cdot$};
\node(1) at (0,0) {$\cdot$};
\node(2) at (2,0) {$\cdot$};
\node(3) at (4,0) {$\cdot$};
\node(4) at (6,0) {$\cdot$};
\node(5) at (8,0) {$\cdot$};
\node(6) at (4,-2) {$\cdot$};

\path[<-](1) edge (2);
\path[->](2) edge (3);
\path[->](3) edge (6);

\path[->](7) edge (1);
\path[->](3) edge (4);
\path[<-](4) edge (5);
\end{tikzpicture}&
$\left(\begin{smallmatrix}
1&2&3&4&5&6&7 \\
1&2&4& 7& 6&5& 3\\
\end{smallmatrix}\right)$
&
\begin{tikzpicture}[rounded corners,scale=.1]
\node at (3,0){$\cdot$};
\node at (7,0){$\cdot$};
\node at (31,0){$\cdot$};
\node at (13,0){$\cdot$};
\node at (21,0){$\cdot$};
\node at (17,0){$\cdot$};
\node at (15,0){$\cdot$};
\draw (15,0) to (15,1) to (11,1) to (11,0);
\draw (23,0) to (23,4) to (5,4) to (5,0);
\draw (25,0) to (25,5) to (1,5) to (1,0);
\draw (35,-9) to (35,2) to (27,2) to (27,0);
\draw (33,0) to (33,1) to (29,1) to (29,0);
\draw (19,0) to (19,3) to (9,3) to (9,0);
\draw (23,0) to (23,-1) to (19,-1) to (19,0);
\draw (29,0) to (29,-4) to (5,-4) to (5,0);
\draw (27,0) to (27,-3) to (9,-3) to (9,0);
\draw (33,0) to (33,-5) to (1,-5) to (1,0);
\draw (25,0) to (25,-2) to (11,-2) to (11,0);
\end{tikzpicture}
\\

\hline
\begin{tikzpicture}[scale =.35]		
\node(7) at (-2,0) {$\cdot$};
\node(1) at (0,0) {$\cdot$};
\node(2) at (2,0) {$\cdot$};
\node(3) at (4,0) {$\cdot$};
\node(4) at (6,0) {$\cdot$};
\node(5) at (8,0) {$\cdot$};
\node(6) at (4,-2) {$\cdot$};

\path[<-](1) edge (2);
\path[->](2) edge (3);
\path[->](3) edge (6);

\path[<-](7) edge (1);
\path[->](3) edge (4);
\path[<-](4) edge (5);
\end{tikzpicture}&
$\left(\begin{smallmatrix}
1&2&3&4&5&6&7 \\
1&4& 7& 6&5& 3&2\\
\end{smallmatrix}\right)$
&\begin{tikzpicture}[rounded corners,scale=.1]
\node at (3,0){$\cdot$};
\node at (35,0){$\cdot$};
\node at (31,0){$\cdot$};
\node at (11,0){$\cdot$};
\node at (19,0){$\cdot$};
\node at (15,0){$\cdot$};
\node at (13,0){$\cdot$};
\draw (37,0) to (37,2) to (27,2) to (27,0);
\draw (23,0) to (23,5) to (1,5) to (1,0);
\draw (39,-9) to (39,3) to (25,3) to (25,0);
\draw (21,0) to (21,4) to (5,4) to (5,0);
\draw (13,0) to (13,1) to (9,1) to (9,0);
\draw (33,0) to (33,1) to (29,1) to (29,0);
\draw (17,0) to (17,2) to (7,2) to (7,0);
\draw (29,0) to (29,-5) to (1,-5) to (1,0);
\draw (27,0) to (27,-4) to (5,-4) to (5,0);
\draw (25,0) to (25,-3) to (7,-3) to (7,0);
\draw (37,0) to (37,-1) to (33,-1) to (33,0);
\draw (21,0) to (21,-1) to (17,-1) to (17,0);
\draw (23,0) to (23,-2) to (9,-2) to (9,0);
\end{tikzpicture}\\

\hline
\begin{tikzpicture}[scale =.35]		
\node(7) at (-2,0) {$\cdot$};
\node(1) at (0,0) {$\cdot$};
\node(2) at (2,0) {$\cdot$};
\node(3) at (4,0) {$\cdot$};
\node(4) at (6,0) {$\cdot$};
\node(5) at (8,0) {$\cdot$};
\node(6) at (4,-2) {$\cdot$};

\path[<-](1) edge (2);
\path[->](2) edge (3);
\path[->](3) edge (6);

\path[->](7) edge (1);
\path[<-](3) edge (4);
\path[<-](4) edge (5);
\end{tikzpicture}&
$\left(\begin{smallmatrix}
1&2&3&4&5&6&7 \\
1&2&4&5& 7& 6& 3\\
\end{smallmatrix}\right)$
&\begin{tikzpicture}[rounded corners,scale=.1]
\node at (5,0){$\cdot$};
\node at (9,0){$\cdot$};
\node at (29,0){$\cdot$};
\node at (11,0){$\cdot$};
\node at (17,0){$\cdot$};
\node at (25,0){$\cdot$};
\node at (19,0){$\cdot$};
\draw (33,0) to (33,5) to (1,5) to (1,0);
\draw (27,0) to (27,3) to (7,3) to (7,0);
\draw (23,0) to (23,2) to (13,2) to (13,0);
\draw (31,0) to (31,4) to (3,4) to (3,0);
\draw (21,-9) to (21,1) to (15,1) to (15,0);
\draw (19,0) to (19,-5) to (1,-5) to (1,0);
\draw (31,0) to (31,-1) to (27,-1) to (27,0);
\draw (33,0) to (33,-3) to (23,-3) to (23,0);
\draw (15,0) to (15,-4) to (3,-4) to (3,0);
\draw (13,0) to (13,-2) to (7,-2) to (7,0);
\end{tikzpicture}

\\
\hline
\begin{tikzpicture}[scale =.35]		
\node(7) at (-2,0) {$\cdot$};
\node(1) at (0,0) {$\cdot$};
\node(2) at (2,0) {$\cdot$};
\node(3) at (4,0) {$\cdot$};
\node(4) at (6,0) {$\cdot$};
\node(5) at (8,0) {$\cdot$};
\node(6) at (4,-2) {$\cdot$};

\path[<-](1) edge (2);
\path[->](2) edge (3);
\path[->](3) edge (6);

\path[<-](7) edge (1);
\path[<-](3) edge (4);
\path[<-](4) edge (5);
\end{tikzpicture}&
$\left(\begin{smallmatrix}
1&2&3&4&5&6&7 \\
1&4&5& 7& 6& 3&2\\
\end{smallmatrix}\right)$
&
\begin{tikzpicture}[rounded corners,scale=.1]
\node at (4,0){$\cdot$};
\node at (48,0){$\cdot$};
\node at (42,0){$\cdot$};
\node at (18,0){$\cdot$};
\node at (20,0){$\cdot$};
\node at (28,0){$\cdot$};
\node at (26,0){$\cdot$};
\draw (30,0) to (30,1) to (26,1) to (26,0);
\draw (34,0) to (34,7) to (10,7) to (10,0);
\draw (32,0) to (32,6) to (12,6) to (12,0);
\draw (22,0) to (22,2) to (16,2) to (16,0);
\draw (46,0) to (46,3) to (38,3) to (38,0);
\draw (50,0) to (50,5) to (36,5) to (36,0);
\draw (6,0) to (6,1) to (2,1) to (2,0);
\draw (24,0) to (24,4) to (14,4) to (14,0);
\draw (44,0) to (44,1) to (40,1) to (40,0);
\draw (8,0) to (8,3) to (0,3) to (0,-9);
\draw (32,0) to (32,-2) to (22,-2) to (22,0);
\draw (40,0) to (40,-6) to (10,-6) to (10,0);
\draw (46,0) to (46,-8) to (6,-8) to (6,0);
\draw (50,0) to (50,-9) to (2,-9) to (2,0);
\draw (34,0) to (34,-3) to (16,-3) to (16,0);
\draw (44,0) to (44,-7) to (8,-7) to (8,0);
\draw (36,0) to (36,-4) to (14,-4) to (14,0);
\draw (30,0) to (30,-1) to (24,-1) to (24,0);
\draw (38,0) to (38,-5) to (12,-5) to (12,0);
\end{tikzpicture}
\end{tabular}
\caption{Given a quiver and a permutation in $P_Q$, the given non-self-crossing admissible curve~$\g$ is non-decreasing and $\a_\pi(\g)= {1\,\,2\,\,2\,\,3\,\,2\,\,1}\atop {\hspace{1.3cm}1} $.}\label{E_7_table_12232111}
\end{table}

\begin{Prop}\label{prop_e_7}
If $Q \in \cE_7$, then $\D_Q^+ =\bigcup_{\pi\in P_Q}\{\a_\pi(\g)\,|\,\g\in \G_{\pi,nd}\}$.
\end{Prop}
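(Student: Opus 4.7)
The plan is to establish the nontrivial inclusion $\D_Q^+ \subseteq \bigcup_{\pi\in P_Q}\{\a_\pi(\g)\,|\,\g\in\G_{\pi,nd}\}$; the reverse inclusion is Remark~\ref{rmk_associatedd_roots_subset_of_real_schur_roots}. I would partition $\D_Q^+$ according to the support of each root and dispatch the classes in order of increasing difficulty, mimicking the strategy of Proposition~\ref{prop_e_6}. First, if $\a = \sum \b_i\a_i$ has $\b_i = 0$ for some $i$, then $\a$ is a positive root of the full subquiver $Q'$ induced by $\{j : \b_j \ne 0\}$, which is of type $A$, $D$, or $E_6$. Lemma~\ref{lem_subquiver} together with Remark~\ref{rmk_subquiver} and Propositions~\ref{prop_a_non_dec}, \ref{prop_non_dec_curve_d}, \ref{prop_e_6} immediately place $\a$ in the target union.

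Next, among full-support roots, those of the form $\a' + \a_1$ or $\a' + \a_2$ (with $\a'$ a positive root of the six-vertex subquiver $Q'\subset Q$ omitting vertex $1$ or $2$, respectively) are handled by Lemma~\ref{lem_1_at_leaves}: a non-decreasing curve realising $\a'$ is extended to one realising $\a$ by a loop around $\underline{1}$ or $\underline{2}$. This trick, combined with Step~1, covers every full-support positive root except the eight ``heavy'' roots enumerated at the start of Section~\ref{section6.2}.

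For the six heavy roots covered by Lemma~\ref{lem_E_7_6_roots}, one has $c_\pi\a <_D \a$ or $c_\pi^{-1}\a <_D \a$ (and both remain positive) for every $\pi \in P_Q$; the smaller root $c_\pi^{\pm 1}\a$ either has a zero coefficient or is a strictly smaller heavy root, so by induction on height together with Lemma~\ref{lem_cox_transf}, we conclude that $\a \in \{\a_\pi(\g)\,|\,\g\in\G_{\pi,nd}\}$. The two remaining heavy roots, ${1\,\,2\,\,3\,\,3\,\,2\,\,1}\atop{\hspace{1.3cm}1}$ and ${1\,\,2\,\,2\,\,3\,\,2\,\,1}\atop{\hspace{1.3cm}1}$, are precisely the content of Lemmas~\ref{lem_e_7_root_1233211} and~\ref{lem_e_7_root_1223211}; invoking them finishes the argument.

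The main obstacle is exactly these two leftover roots. Because there exist orientations of $Q$ for which neither $c_\pi\a <_D \a$ nor $c_\pi^{-1}\a <_D \a$ holds for any $\pi \in P_Q$, the Coxeter-reduction strategy of Lemma~\ref{lem_cox_transf} is simply unavailable, and one cannot hope to reduce the problem inductively. For those orientations an explicit non-decreasing non-self-crossing admissible curve must be exhibited one orientation at a time; this is the combinatorial bookkeeping performed in Tables~\ref{E_7_table_1233211_1}, \ref{E_7_table_1233211_2}, and~\ref{E_7_table_12232111}, where the arrow-reversal symmetry of the quiver cuts the case list roughly in half and a computer search handles the rest.
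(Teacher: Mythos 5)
Your overall architecture matches the paper's proof of Proposition~\ref{prop_e_7}: reduce to full-support roots via Lemma~\ref{lem_subquiver} and Remark~\ref{rmk_subquiver}, absorb the roots of the form $\a_1+\a$ and $\a_2+\a$ via Lemma~\ref{lem_1_at_leaves}, quote Lemmas~\ref{lem_e_7_root_1233211} and~\ref{lem_e_7_root_1223211} for the two recalcitrant roots, and run a height induction on the six roots of Lemma~\ref{lem_E_7_6_roots} using Lemma~\ref{lem_cox_transf}. But the induction step as you state it has a genuine gap. Lemma~\ref{lem_cox_transf} is a statement about a \emph{fixed} permutation: to conclude $\a\in\{\a_\pi(\g)\,|\,\g\in\G_{\pi,nd}\}$ you need $c_\pi\a<_D\a$ (or $c_\pi^{-1}\a<_D\a$) \emph{and} $c_\pi\a\in\{\a_\pi(\g)\,|\,\g\in\G_{\pi,nd}\}$ for the \emph{same} $\pi$. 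Your earlier steps only place the smaller root in the union over all of $P_Q$; in fact Lemma~\ref{lem_1_at_leaves} produces the smaller root only for special permutations (those with $\pi^{-1}(1)\in\{1,7\}$, resp.\ $\pi^{-1}(2)$ extremal), while Lemma~\ref{lem_E_7_6_roots} does not tell you that the descent occurs for one of those permutations, nor in which direction. Writing ``by induction on height together with Lemma~\ref{lem_cox_transf}, we conclude'' papers over exactly the coordination problem that the published proof is built to solve: it fixes $\pi$ with $\pi^{-1}(1)$, $\pi^{-1}(2)$, $\pi^{-1}(3)$ extremal, forms $R=\{\pi,(1\,2)\pi,(1\,3)\pi,(1\,3)(1\,2)\pi\}\cap P_Q$, and exploits the fact that $c_p$ is the same Weyl group element for all $p\in R$ (because $s_1$ commutes with $s_2$ and $s_3$), so that the descent and the realization of the smaller root can be arranged for a common $p\in R$.

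A secondary inaccuracy: the smaller root $c_\pi^{\pm1}\a$ need not ``have a zero coefficient or be a strictly smaller heavy root.'' For $\a=\begin{smallmatrix}1&2&2&2&2&1\\&&&1&&\end{smallmatrix}$, the smallest of the six heavy roots, no strictly smaller heavy root exists, yet the descent can land on a full-support root such as $\begin{smallmatrix}1&2&2&2&1&1\\&&&1&&\end{smallmatrix}$, which is neither of your two alternatives. This does not by itself break the induction, since such roots are covered by your second step; but that coverage is itself permutation-dependent, which returns you to the gap above.
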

\begin{proof}
Let $Q \in \cE_7$ and $\pi$ be a permutation in $P_Q$ such that $\pi^{-1}(1) =1$ or 7, $\pi^{-1}(2)$ is the maximum or minimum value of $\{1,\ldots, 7\} \setminus \big\{\pi^{-1}(1)\big\}$, and $\pi^{-1}(3)$ is the maximum or minimum value of $\{1,\ldots, 7\} \setminus \big\{\pi^{-1}(1), \pi^{-1}(2)\big\}$. Such permutation exists as $\pi \in P_Q \cap U_n$ satisfies such condition. Note that $(1\,\,2)\pi \in P_Q$ if and only if $\big|\pi^{-1}(1)-\pi^{-1}(2)\big| = 1$ and $(1\,\,3)\pi \in P_Q$ if and only if $\big|\pi^{-1}(1)-\pi^{-1}(3)\big| = 1$. Let $R =\{(1\,\, 2) \pi$, $(1\,\,3) \pi$, $(1\,\,3)(1\,\,2)\pi,\pi\} \cap P_Q$. For all $p \in R$, $c_p\a$~is constant as $s_1$ commutes with $s_2$ and $s_3$.

By Lemma \ref{lem_subquiver}, Remark \ref{rmk_subquiver}, and Propositions~\ref{prop_a_non_dec},~\ref{prop_non_dec_curve_d} and~\ref{prop_e_6}, the positive roots of type~$A$,~$D$, and~$E_6$ are in $\bigcup_{p\in P_Q} \{\a_p(\g)\,|\,\g\in \G_{p,nd}\}$. Also by Lemma~\ref{lem_1_at_leaves}, any root of the form $\a_1 + \a$ where~$\a$ is a positive root of type $D$ or of the form $\a_2 + \a$ where $\a$ is a positive root of type~$E_6$ is in $\bigcup_{p\in P_Q} \{\a_p(\g)\,|\,\g\in \G_{p,nd}\}$. In particular, a root of the form $\a_1 + \a$ where $\a$ is type $D$ is in $\{\a_p(\g)\,|\,\g\in \G_{p,nd}\}$ for $p\in P_Q$ so that $p^{-1}(1) = 1$ or 7. Also a root of the form $\a_2 + \a$ where~$\a$ is type~$E_6$ is in $\{\a_p(\g)\,|\,\g\in \G_{p,nd}\}$ for $p \in P_Q$ so that $p^{-1}(2) = 1$ or 7 and $p|_{p^{-1}(\{1,3,\ldots, 7\})}$ fits the condition of the permutation given in the proof of Proposition~\ref{prop_e_6}. At least one of the permutations in~$R$ satisfies such conditions; thus any root of type $A$, $D$, and $E_6$ and the roots of the form $\a_1 + \a$ where $\a$ is type $D$ and $\a_2+\a$ where~$\a$ is type $E_6$ are in $\bigcup_{p \in R} \{\a_p(\g)\,|\,\g\in \G_{p,nd}\}$.

By Lemma \ref{lem_e_7_root_1233211} and Lemma \ref{lem_e_7_root_1223211}, $\begin{smallmatrix}1&2&2&3&2&1\\ &&&1&& \end{smallmatrix}, \begin{smallmatrix}1&2&3&3&2&1\\ &&&1&& \end{smallmatrix} \in \bigcup_{p\in P_Q}\{\a_p(\g)\,|\,\g\in \G_{p,nd}\}$. In particular, they are in $\bigcup_{p\in R}\{\a_p(\g)\,|\,\g\in \G_{p,nd}\}$. To show that the roots in Lemma~\ref{lem_E_7_6_roots} are in $\bigcup_{p\in P_Q}\{\a_p(\g)\,|\,\g\in \G_{p,nd}\}$, we induct on the height of the roots.

The root with smallest height of the roots in Lemma~\ref{lem_E_7_6_roots} is $\begin{smallmatrix}1&2&2&2&2&1\\ &&&1&& \end{smallmatrix}$. Either $c_\pi\a$ or $c_\pi^{-1}\a$ is smaller than $\begin{smallmatrix}1&2&2&2&1&1\\ &&&1&& \end{smallmatrix}$ which is in $\{\a_\pi(\g)\,|\,\g\in \G_{\pi,nd}\}$ as $\pi^{-1}(1) = 1$ or $7$. Thus $\a \in \{\a_{\pi}(\g)\,|\,\g\in \G_{(1\,\,2)\pi,nd}\}$.

Let $\a$ be a root in Lemma~\ref{lem_E_7_6_roots}. Assume that for any root with height less than $h(\a)$ is in~$\bigcup_{p\in R}\{\a_p(\g)\,|\,\g\in \G_{p,nd}\}$. By Lemma~\ref{lem_E_7_6_roots}, $c_p\a$ or $c_p^{-1}\a$ is smaller than $\a$. Assume without loss of generality, $c_p\a <_D \a$. As $c_p\a$ is same for all $p\in R$, there is $p \in R$ such that $c_p\a \in \{\a_p(\g)\,|\allowbreak \g\in \G_{p,nd}\}$ and by Lemma~\ref{lem_cox_transf}, $\a \in \{\a_p(\g)\,|\,\g\in \G_{p,nd}\}$. Therefore $\D_Q^+ =\bigcup_{\pi\in P_Q}\{\a_\pi(\g)\,|\allowbreak\g\in \G_{\pi,nd}\}$.
\end{proof}

\begin{proof}[Proof of Theorem \ref{thm_main}]
This theorem follows from Propositions~\ref{prop_a_strict_inc}, \ref{prop_a_non_dec}, \ref{prop_non_dec_curve_d},~\ref{prop_e_6}, and~\ref{prop_e_7}.
\end{proof}

\appendix

\section{Other cases}\label{appendix}

\subsection[Type E8]{Type $\boldsymbol{E_8}$} \label{E_8}
Let $Q$ be a quiver of type $E_8$ and $\a$ be a positive root of $\D_Q$. By Lemmas~\ref{lem_subquiver} and~\ref{lem_cox_transf}, Remark~\ref{rmk_subquiver}, and Theorem~\ref{thm_main}, if $c_\pi\a$ is of type $A$, $D$, $E_6$, or $E_7$ and $c_\pi\a <_D \a$, then $\a \in \bigcup_{\pi \in P_Q} \{\a_\pi(\g)\,|\, \g \in \G_{\pi, nd}\}$. We could proceed by induction on the height of the roots to prove that the real Schur roots are associated roots of non-decreasing non-self-crossing admissible curves. However, there are positive roots $\a$ and quivers $Q$ of type $E_8$ such that $c_\pi \a$ and $c_\pi^{-1}\a$ are not comparable with~$\a$ for any $\pi \in P_Q$. For example, let $\a$ be the root $\begin{smallmatrix}
1&2&2&3&3&2&1\\
&&&&1&&
\end{smallmatrix}$ and $Q$ be the quiver below:
\begin{center}
\begin{tikzpicture}[scale =.8]		
\node[shape=circle,draw=black, minimum size = .4cm, inner sep = 4pt, outer sep = 0.5pt](2) at (-4,0) {$2$};
\node[shape=circle,draw=black, minimum size = .4cm, inner sep = 4pt, outer sep = 0.5pt](3) at (-2,0) {$3$};
\node[shape=circle,draw=black, minimum size = .4cm, inner sep = 4pt, outer sep = 0.5pt](4) at (0,0) {$4$};
\node[shape=circle,draw=black, minimum size = .4cm, inner sep = 4pt, outer sep = 0.5pt](5) at (2,0) {$5$};
\node[shape=circle,draw=black, minimum size = .4cm, inner sep = 4pt, outer sep = 0.5pt](8) at (4,0) {$8$};
\node[shape=circle,draw=black, minimum size = .4cm, inner sep = 4pt, outer sep = 0.5pt](6) at (6,0) {$6$};
\node[shape=circle,draw=black, minimum size = .4cm, inner sep = 4pt, outer sep = 0.5pt](1) at (8,0) {$1$};
\node[shape=circle,draw=black, minimum size = .4cm, inner sep = 4pt, outer sep = 0.5pt](7) at (4,-2) {$7$};
\path[->](1) edge (6);
\path[->](2) edge (3);
\path[->](3) edge (4);
\path[<-](4) edge (5);
\path[<-](5) edge (8);
\path[<-](7) edge (8);
\path[->] (8) edge (6);
\end{tikzpicture}
\end{center}

Note that $\pi= \left(\begin{smallmatrix} 1&2&3&4&5&6&7&8\\ 1&2&3&8&7&6&5&4\end{smallmatrix}\right)$ is a permutation in $P_Q$; then $c_\pi \a = \begin{smallmatrix}
1&2&3&3&4&2&1\\
&&&&2&&
\end{smallmatrix}$ and $c_\pi^{-1} \a = \begin{smallmatrix}
1&1&1&2&3&2&1\\
&&&&2&&
\end{smallmatrix}$. Neither are comparable with $\a$. Similarly, $c_p\a$ and $c_p^{-1}\a$ are not comparable with~$\a$ for all other~$p \in P_Q$. As Lemma~\ref{lem_1_at_leaves} does not apply to $\a$, we would need to construct a non-decreasing admissible curve such that the associated root is $\a$. Below is a list of such roots, i.e., roots such that Lemma~\ref{lem_1_at_leaves} does not apply and are not comparable to the resulting root after applying $c_\pi$ or the resulting root after applying $c_\pi^{-1}$.

\begin{tikzpicture}
\node at (-1,0){1};
\node at (-.5,0){2};
\node(1) at (0,0) {$2$};
\node(2) at (.5,0) {$2$};
\node(3) at (1,0) {$3$};
\node(4) at (1.5,0) {$2$};
\node(5) at (2,0) {$1$};
\node(6) at (1,-.5) {$1$};

\node at (3,0){1};
\node at (3.5,0){2};
\node(2) at (4,0) {$2$};
\node(3) at (4.5,0) {$2$};
\node(4) at (5,0) {$3$};
\node(5) at (5.5,0) {$2$};
\node at (6,0) {$1$};
\node(6) at (5,-.5) {$2$};

\node(1) at (7,0) {$1$};
\node(2) at (7.5,0) {$2$};
\node(3) at (8,0) {$2$};
\node(4) at (8.5,0) {3};
\node(5) at (9,0) {3};
\node(4) at (9.5,0) {$2$};
\node(5) at (10,0) {$1$};
\node(6) at (9,-.5) {$1$};

\node(2) at (11,0) {$1$};
\node(3) at (11.5,0) {$2$};
\node(4) at (12,0) {$2$};
\node(5) at (12.5,0) {$3$};
\node(2) at (13,0) {$3$};
\node(3) at (13.5,0) {$2$};
\node(4) at (14,0) {$1$};
\node(6) at (13,-.5) {$2$};

\end{tikzpicture}

\begin{tikzpicture}
\node at (-1,0){1};
\node at (-.5,0){2};
\node(1) at (0,0) {3};
\node(2) at (.5,0) {$3$};
\node(3) at (1,0) {$3$};
\node(4) at (1.5,0) {$2$};
\node(5) at (2,0) {$1$};
\node(6) at (1,-.5) {$1$};

\node at (3,0){1};
\node at (3.5,0){2};
\node(2) at (4,0) {$2$};
\node(3) at (4.5,0) {$3$};
\node(4) at (5,0) {$4$};
\node(5) at (5.5,0) {$2$};
\node at (6,0) {$1$};
\node(6) at (5,-.5) {$2$};

\node(1) at (7,0) {$1$};
\node(2) at (7.5,0) {$2$};
\node(3) at (8,0) {$2$};
\node(4) at (8.5,0) {3};
\node(5) at (9,0) {4};
\node(4) at (9.5,0) {$3$};
\node(5) at (10,0) {$1$};
\node(6) at (9,-.5) {$2$};

\node(2) at (11,0) {$1$};
\node(3) at (11.5,0) {$2$};
\node(4) at (12,0) {$3$};
\node(5) at (12.5,0) {$3$};
\node(2) at (13,0) {$4$};
\node(3) at (13.5,0) {$2$};
\node(4) at (14,0) {$1$};
\node(6) at (13,-.5) {$2$};

\end{tikzpicture}

\begin{tikzpicture}
\node at (-1,0){1};
\node at (-.5,0){2};
\node(1) at (0,0) {3};
\node(2) at (.5,0) {$3$};
\node(3) at (1,0) {$4$};
\node(4) at (1.5,0) {$3$};
\node(5) at (2,0) {$1$};
\node(6) at (1,-.5) {$2$};

\node at (3,0){1};
\node at (3.5,0){2};
\node(2) at (4,0) {$3$};
\node(3) at (4.5,0) {$4$};
\node(4) at (5,0) {$4$};
\node(5) at (5.5,0) {$2$};
\node at (6,0) {$1$};
\node(6) at (5,-.5) {$2$};

\node(1) at (7,0) {$1$};
\node(2) at (7.5,0) {$2$};
\node(3) at (8,0) {$3$};
\node(4) at (8.5,0) {3};
\node(5) at (9,0) {4};
\node(4) at (9.5,0) {$3$};
\node(5) at (10,0) {$2$};
\node(6) at (9,-.5) {$2$};

\node(2) at (11,0) {$1$};
\node(3) at (11.5,0) {$2$};
\node(4) at (12,0) {$3$};
\node(5) at (12.5,0) {$4$};
\node(2) at (13,0) {$4$};
\node(3) at (13.5,0) {$3$};
\node(4) at (14,0) {$1$};
\node(6) at (13,-.5) {$2$};

\end{tikzpicture}

\begin{tikzpicture}
\node at (-1,0){1};
\node at (-.5,0){2};
\node(1) at (0,0) {3};
\node(2) at (.5,0) {$4$};
\node(3) at (1,0) {5};
\node(4) at (1.5,0) {$3$};
\node(5) at (2,0) {$1$};
\node(6) at (1,-.5) {$2$};

\node at (3,0){1};
\node at (3.5,0){2};
\node(2) at (4,0) {$3$};
\node(3) at (4.5,0) {$4$};
\node(4) at (5,0) {5};
\node(5) at (5.5,0) {$3$};
\node at (6,0) {$1$};
\node(6) at (5,-.5) {$3$};

\node(1) at (7,0) {$1$};
\node(2) at (7.5,0) {$2$};
\node(3) at (8,0) {$3$};
\node(4) at (8.5,0) {4};
\node(5) at (9,0) {5};
\node(4) at (9.5,0) {$3$};
\node(5) at (10,0) {$2$};
\node(6) at (9,-.5) {$2$};

\node(2) at (11,0) {$1$};
\node(3) at (11.5,0) {$2$};
\node(4) at (12,0) {$3$};
\node(5) at (12.5,0) {$4$};
\node(2) at (13,0) {$5$};
\node(3) at (13.5,0) {$4$};
\node(4) at (14,0) {$2$};
\node(6) at (13,-.5) {$2$};

\end{tikzpicture}

The Sage code used in Section~\ref{section6.2} to identify a non-decreasing non-self-crossing admissible curve for a positive root of type $E_7$ has limitations to be used to a non-decreasing non-self-crossing admissible curve for a positive root of type $E_8$. Let $\ell$ be the line through every marked points. For example, see the diagram below.

\begin{center}
 \includegraphics[trim =0mm 245mm 70mm 5mm, clip, width = .5\linewidth ]{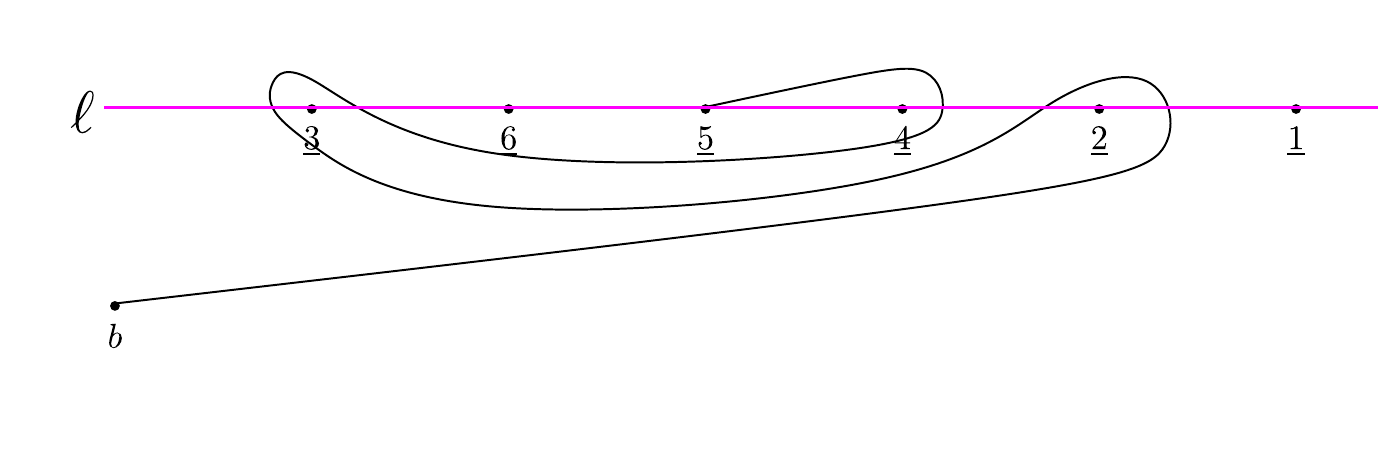}
\end{center}

The curve intersects $\ell$ at 6 points. The Sage code used in Section~\ref{section6.2} identifies all non-self-crossing admissible curves given an intersection number. For a quiver of type $E_8$, this intersection number must be very large compared to a quiver of type $E_7$. This code is too slow to run for quivers of type $E_8$.

\subsection[Affine quivers of type A with unique sink and source]{Affine quivers of type $\boldsymbol{A}$ with unique sink and source} \label{affine_a}
Recall that Conjecture~\ref{conjmain} assumes that $Q$ is any acyclic quiver. In this section we discuss an example of non-finite quiver that satisfies Conjecture~\ref{conjmain}. Let~$Q$ be an acyclic quiver of type $A$ with a unique source and a unique sink. Let $P=s\to p_1\to \cdots \to p_k\to t$ and $R=s\to r_1\to \cdots \to r_\ell \to t$ be the two paths in~$Q$, i.e.,~$Q$ is

\begin{center}
\begin{tikzpicture}
 \node[shape=circle,draw=black, minimum size = .6cm, inner sep = 4pt, outer sep = 0.5pt,](1) at (0,-.2) {$s$};
 \node[shape=circle,draw=black, minimum size = .6cm, inner sep = 2pt, outer sep = 0.5pt,](2) at (1,1) {$p_1$};
 \node[shape=circle,draw=black, minimum size = .6cm, inner sep = 2pt, outer sep = 0.5pt,](3) at (3,1) {$p_2$};
 \node[shape=circle,draw=black, minimum size = .6cm, inner sep = 2pt, outer sep = 0.5pt,](4) at (5,1) {$p_3$};
 \node(5) at (7,1) {$\cdots$};
 \node[shape=circle,draw=black, minimum size = .6cm, inner sep = 2pt, outer sep = 0.5pt,](11) at (9,1) {$p_k$};

 \node[shape=circle,draw=black, minimum size = .6cm, inner sep = 4pt, outer sep = 0.5pt,](6) at (10,-.2) {$t$};

 \node[shape=circle,draw=black, minimum size = .6cm, inner sep = 2pt, outer sep = 0.5pt,](7) at (2,-.7) {$r_1$};
 \node[shape=circle,draw=black, minimum size = .6cm, inner sep = 2pt, outer sep = 0.5pt,](8) at (4,-.7) {$r_2$};
 \node(9) at (6,-.7) {$\cdots$};
 \node[shape=circle,draw=black, minimum size = .6cm, inner sep = 2pt, outer sep = 0.5pt,](10) at (8,-.7) {$r_\ell$};
 \path[->] (1) edge (2);
 \path[->] (1) edge (7);
 \path[->] (2) edge (3);
 \path[->] (3) edge (4);
 \path[->] (4) edge (5);
 \path[->] (5) edge (11);
 \path[->] (11) edge (6);
 \path[->] (7) edge (8);
 \path[->] (8) edge (9);
 \path[->] (9) edge (10);
 \path[->] (10) edge (6);
\end{tikzpicture}
\end{center}

Let $T$ be a triangulation of an annulus $X$ with $k+1$ marked points on the one of the boundaries and $\ell+1$ marked points on the other boundary. Then consider the following triangulation on~$X$.

\begin{center}
\includegraphics[trim =20mm 234mm 142mm 22mm, clip, width = .35\linewidth ]{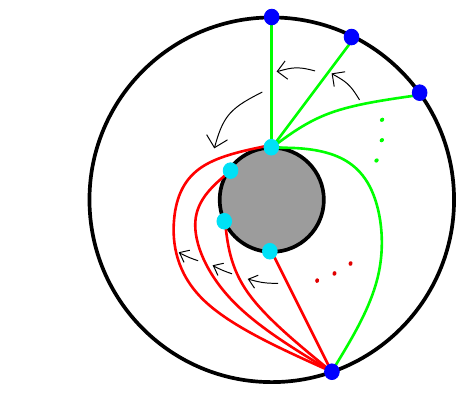}
\end{center}

\begin{Rmk}
Given a triangulation, let $Q_T$ be the quiver arising from $T$ as described in \cite{FominShapThurs}. Then, $Q_T = Q$.
\end{Rmk}

In \cite{FominShapThurs}, it was shown that each cluster variable corresponds to an arc on $A$ up to isotopy and the denominator vector of the cluster variable is given by the intersection of the curve and the arcs in $T$. As $Q$ is an acyclic quiver, the set of positive $c$-vectors coincides with the set of non-initial $d$-vectors.

\begin{Prop}\label{prop_c_vector_from_triangulation}
Let $Q$ be an affine quiver of type $A$ with unique source and sink. Then the $c$-vectors have one of the forms below:
\begin{gather*}
g\left(\a_s+\sum_{i=1}^u \a_{p_i} + \sum_{i=1}^v \a_{q_i}\right) + (g-1)\left(\sum_{i=u+1}^k \a_{p_i}+\sum_{i=v+1}^\ell \a_{q_i} +\a_t \right), \qquad \text{or}\\
 (g-1)\left(\a_s+\sum_{i=1}^u \a_{p_i} +\sum_{i=1}^v \a_{q_i}\right) + g\left(\sum_{i=u+1}^k \a_{p_i}+\sum_{i=v+1}^\ell \a_{q_i} +\a_t\right),
\end{gather*}
where $g$ is any positive integer, $0\leq u\leq k$, and $0\leq v \leq \ell$.
\end{Prop}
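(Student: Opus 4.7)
My plan is to prove the proposition through the Fomin--Shapiro--Thurston correspondence \cite{FominShapThurs}. Under that correspondence, the cluster variables of the cluster algebra of $Q$ are in bijection with the isotopy classes of internal arcs on $X$, the arcs of $T$ corresponding to the initial cluster. For a non-initial cluster variable, the $i$-th entry of its $d$-vector equals the geometric intersection number of the associated arc $\g$ with the arc $T_i$ of $T$ labelled by vertex $i$. Since $Q$ is acyclic, Caldero--Keller \cite{CalKel} together with Chavez \cite{Chavez15} give that the positive $c$-vectors coincide with the $d$-vectors of non-initial cluster variables. Hence it suffices to classify the isotopy classes of internal arcs on $X$ that are not in $T$, and to compute their intersection numbers with the arcs of $T$.

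To carry out this computation, I would pass to the universal cover $\tilde X = \mathbb R \times [0,1]$ of the annulus, where $T$ lifts to a periodic triangulation $\tilde T$ and each marked point of $X$ lifts to a $\mathbb Z$-orbit. An arc $\g$ on $X$ lifts to an arc $\tilde\g$ in $\tilde X$ joining two lifted marked points; its winding number $g\ge 0$ is (up to sign) the number of fundamental domains separating the endpoints of $\tilde\g$. After replacing $\tilde\g$ with a straight-line representative, the intersection numbers $|\g\cap T_i|$ equal $|\tilde\g\cap \tilde T_i|$ and can be read off directly from the strip. An elementary enumeration then shows that $\tilde\g$ meets each spanning arc $s$ and $t$ either $g$ or $g-1$ times, the two options corresponding to the two orientations of winding and hence to the two displayed forms; meets $p_i$ exactly $g$ times if $i\le u$ and $g-1$ times if $i>u$; and analogously for $r_j$ relative to $v$. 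Here $u\in\{0,\ldots,k\}$ and $v\in\{0,\ldots,\ell\}$ are determined by the position of the endpoints of $\tilde\g$ relative to the lifts of $s$ and $t$. Summing these intersection numbers produces exactly the two expressions in the statement. Conversely, every triple $(g,u,v)$ with $g\ge 1$, $0\le u\le k$, $0\le v\le \ell$ is realized by some arc, so every vector of the stated form appears as a positive $c$-vector.

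The technical heart of the argument is the intersection-counting step. One must pick tight representatives in $\tilde X$ that meet $\tilde T$ transversely and in the minimum number of points, and then track carefully how the lifted arc interacts with the $p_i$- and $r_j$-arcs adjacent to its endpoints. The analysis splits into cases according to which boundary component(s) of $X$ contain the endpoints of $\g$, and into two subcases according to the side from which $\tilde\g$ approaches each spanning arc; this last dichotomy is exactly what produces the two forms in the proposition. Once the framework is set up in the universal cover, the count reduces to a uniform bookkeeping exercise on straight-line diagrams, with the $g=0$ case recovering the arcs of $T$ (the initial cluster) and the $g\ge 1$ case covering all non-initial cluster variables.
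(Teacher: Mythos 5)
Your proposal follows essentially the same route as the paper: both reduce the claim, via the Fomin--Shapiro--Thurston correspondence and the identification of positive $c$-vectors with non-initial $d$-vectors, to classifying arcs on the annulus by winding number and endpoint position and then counting their intersections with the arcs of $T$. The only real difference is organizational --- you carry out the count in the universal cover with straight-line representatives, whereas the paper reduces to the once-winding case (each additional loop adds $1$ to every intersection number) and handles the finitely many remaining endpoint configurations by explicit pictures; note that both treatments gloss over the peripheral arcs with both endpoints on a single boundary circle, whose $d$-vectors are type-$A$ roots not literally of the two displayed forms, so a fully careful write-up should address why these are excluded or handled elsewhere.
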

\begin{proof}Let $\tau$ be an arc on $X$ with endpoints $z$ and $y$ such that $\tau$ is not in $T$ and it is not boundary parallel. Then $z$ and $y$ are on different boundaries. Let $\text{Int}_T(p)$ be the number of arcs in $T$ such that one of the boundary points is~$p$. Note that if $\tau $ circles around the inner boundary $g-1$ times then each arc in $T$ is intersected at least $g-1$ times. Thus we just need to look at the case where $\tau$ circles around once. There are three possible cases: $\text{Int}_T(z) > 1$ and $\text{Int}_T(y)>1$, $\text{Int}_T(z) =1$ and $\text{Int}_T(y)>1$, and $\text{Int}_T(z)=1$ and $\text{Int}_T(y)=1$.

If $\text{Int}_T(z)>1$ and $\text{Int}_T(y) >1$, then $\tau$ is isopotic to either curves:

\begin{center}
\includegraphics[trim =20mm 118mm 70mm 138mm, clip, width = .8\linewidth ]{affine_a.pdf}
\end{center}

where the red arcs are corresponding to the path $P$ and green correspond to the path $R$.

If $\text{Int}_T(z) =1 $ and $\text{Int}_T(y)>1$, then it is isopotic to either curves:
\begin{center}
\includegraphics[trim =20mm 170mm 70mm 85mm, clip, width = .8\linewidth ]{affine_a.pdf}
\end{center}

If $\text{Int}_T(z) =1 $ and $\text{Int}_T(y)=1$, then it is isopotic to either curves:
\begin{center}
\includegraphics[trim =20mm 63mm 70mm 193mm, clip, width = .8\linewidth ]{affine_a.pdf}
\end{center}
\end{proof}

\begin{Rmk}\label{rmk_subquiver_affine_a}
Let $Q$ be an affine quiver of type $A$. Any connected full subquiver~$Q'$ such that~$Q'$ is not equivalent to~$Q$ is a quiver of type $A$. Given $\pi \in P_Q$, let $\pi' = \varphi(\pi)$. By Proposition~\ref{prop_a_non_dec}, $\{\a_{\pi'}(\g)\,|\,\g \in \G_{\pi',nd}\} = \D^+_{Q'}$. Then by Lemma~\ref{lem_subquiver}, $\D^+_{Q'} \subseteq \{\a_\pi(\g)\,|\,\g \in \G_{\pi,nd}\}$.
\end{Rmk}

\begin{Prop} \label{prop_oriened_path}
Let $Q$ be an affine quiver of type $A$ with a unique source and a unique sink. Let $\a$ be a positive root of the form \[g\left(\a_s+\sum_{i=1}^u \a_{p_i} +\sum_{i=1}^v \a_{q_i}\right) + (g-1)\left(\sum_{i=u+1}^k \a_{p_i}+\sum_{i=v+1}^\ell \a_{q_i} +\a_t\right) \] or \[(g-1)\left(\a_s+\sum_{i=1}^u \a_{p_i} +\sum_{i=1}^v \a_{q_i}\right) + g\left(\sum_{i=u+1}^k \a_{p_i}+\sum_{i=v+1}^\ell \a_{q_i} +\a_t\right), \] where $g \in \Z_{\geq 1}$.
 Then $\a \in \{\a_\pi(\g)\,|\,\g \in \G_{\pi,nd}\}$.
\end{Prop}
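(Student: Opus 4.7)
The plan is to proceed by induction on the parameter $g$. For the base case $g=1$, the root $\a$ specialises to $\beta_1$ or $\beta_2$, which is a positive root of a proper connected full subquiver of $Q$. Since removing any vertex from the cyclic underlying graph of $Q$ yields a disjoint union of paths, every proper connected full subquiver is of finite type $A$. Hence Proposition~\ref{prop_a_non_dec} applied to the subquiver, combined with Lemma~\ref{lem_subquiver} and Remark~\ref{rmk_subquiver_affine_a}, yields $\a \in \{\a_\pi(\g) \mid \g \in \G_{\pi,nd}\}$ for every $\pi \in P_Q$.

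For the inductive step, I would assume the claim for all roots of the listed form with winding parameter strictly less than $g \geq 2$. Both forms can be rewritten uniformly as $\a = (g-1)\delta + \beta$, where $\beta \in \{\beta_1,\beta_2\}$ and $\delta := \a_s + \sum_{i=1}^k \a_{p_i} + \sum_{j=1}^\ell \a_{q_j} + \a_t$ is the null root. The key fact is that $c_\pi \delta = \delta$ for every Coxeter element, so $c_\pi^{\pm 1}\a = (g-1)\delta + c_\pi^{\pm 1}\beta$: the imaginary part is preserved while the finite-type part cycles through its Coxeter orbit. The strategy is to choose $\pi \in P_Q$ (typically with $\pi^{-1}(s) = 1$ or $\pi^{-1}(t) = n$) and a sign so that a finite number of applications of $c_\pi^{\pm 1}$ carries $\a$ strictly downward in $\leq_D$ to a root that is either of the listed form with strictly smaller winding parameter, or supported on a proper full subquiver of $Q$. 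In either case the inductive hypothesis (or the base case) applies, and iterated application of Lemma~\ref{lem_cox_transf} lifts the conclusion back up through the chain to $\a$.

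An equivalent geometric approach would be to construct the curve $\g$ directly: starting from the strictly increasing curve for $\beta$ produced by Proposition~\ref{prop_a_strict_inc} on the subquiver supporting $\beta$, one iteratively appends the operation $c$ (wrapping once counter-clockwise around all $n$ marked points). Each loop shifts the associated root by $c_\pi$, and since the Coxeter orbit of $\beta$ becomes eventually monotone in $\leq_D$, sufficiently many iterations yield a non-self-crossing admissible curve whose associated root equals $\a$ and which is non-decreasing with respect to the chosen $\pi$.

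The main obstacle is that a single $c_\pi^{\pm 1}$ step does not change the winding parameter by one: the Coxeter element has finite order on the root lattice modulo $\delta$, so an imaginary shift by $\delta$ only materialises once a full Coxeter period has been completed. Between such periods, intermediate roots $c_\pi^{j}\a$ may leave the family of listed-form roots entirely, and the strict $<_D$-comparison required by Lemma~\ref{lem_cox_transf} must be verified at every intermediate step, not merely between consecutive listed-form roots. A careful choice of $\pi$ respecting the unique source/sink structure, together with the sign coherence of $c$-vectors, is what should make these intermediate comparisons work, and the accompanying case analysis over the parameters $(u,v)$ is the most delicate part of the argument.
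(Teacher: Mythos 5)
Your overall architecture matches the paper's: induct on $g$, handle $g=1$ via the type-$A$ full subquivers (Proposition~\ref{prop_a_non_dec}, Lemma~\ref{lem_subquiver}, Remark~\ref{rmk_subquiver_affine_a}), and descend via a Coxeter transformation together with Lemma~\ref{lem_cox_transf}. The problem is that the decisive step of the induction is never actually carried out: you write that ``the strategy is to choose $\pi$ and a sign so that a finite number of applications of $c_\pi^{\pm1}$ carries $\a$ strictly downward,'' and then in your final paragraph you concede that the required intermediate $<_D$-comparisons and the case analysis over $(u,v)$ are ``the most delicate part of the argument'' without doing them. That is precisely the content of the proposition, so as written this is an outline, not a proof.

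Moreover, the obstacle you flag is not the real one, and recognizing this is what closes the gap. Take the concrete permutation $\pi$ listing $s$, then $p_1,\ldots,p_k$, then the vertices of the second path, then $t$ (this lies in $P_Q$). A direct computation --- which is the heart of the paper's proof --- shows that a \emph{single} application of $c_\pi$ or of $c_\pi^{-1}$ (the sign chosen according to which of the two listed forms $\a$ has) sends $\a$ to a root again of one of the listed forms with parameter $g-1$ and with $(u,v)$ at an extreme value (all coefficients on one side equal), and that this image is $<_D\a$. For example, for $k=1$, $\ell=0$ and $\a=g\a_s+g\a_{p_1}+(g-1)\a_t$ one gets $c_\pi^{-1}\a=(g-1)\a_s+(g-2)(\a_{p_1}+\a_t)$ in one step. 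So no ``full Coxeter period'' is needed, no intermediate roots leave the family, and Lemma~\ref{lem_cox_transf} is applied exactly once per inductive step. Your decomposition $\a=(g-1)\delta+\b$ with $c_\pi\delta=\delta$ is a fine way to organize the computation, but without exhibiting $\pi$ and verifying the one-step descent (including the boundary case $g=2$, where the image degenerates to a root supported on a proper subquiver), the argument is incomplete.
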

\begin{proof}
 We proceed by induction on $g$. If $g =1$, then $\a$ is of a real Schur root of type $A$. Thus $\a \in \{\a_\pi(\g)\,|\,\g \in \G_{\pi,nd}\}$ by Remark~\ref{rmk_subquiver_affine_a}.
 Assume for all $a<g$, the roots of the forms
 \[ a\left(\a_s+\sum_{i=1}^u \a_{p_i} +\sum_{i=1}^v \a_{q_i}\right) + (a-1)\left(\sum_{i=u+1}^k \a_{p_i}+\sum_{i=v+1}^\ell \a_{q_i} +\a_t\right)\] and \[(a-1)\left(\a_s+\sum_{i=1}^u \a_{p_i} +\sum_{i=1}^v \a_{q_i}\right) + a\left(\sum_{i=u+1}^k \a_{p_i}+\sum_{i=v+1}^\ell \a_{q_i} +\a_t\right)\] are in $\{\a_\pi(\g)\,|\,\g \in \G_{\pi,nd}\}$.

Let $\pi = \left(\begin{smallmatrix}1 &2&\cdots & k+1 &k+2&\cdots & k+\ell+1& k+\ell+2\\ s&p_1&\cdots & p_k &q_1&\cdots &q_\ell &t \end{smallmatrix}\right)$. Note that $\pi \in P_Q$. Then
\begin{gather*}
c_\pi^{-1} \left( (g-1)\left(\a_s+\sum_{i=1}^u \a_{p_i} +\sum_{i=1}^v \a_{q_i}\right) + g\left(\sum_{i=u+1}^k \a_{p_i}+\sum_{i=v+1}^\ell \a_{q_i} +\a_t\right)\right) \\
\qquad{} = (g-1)\left(\a_s+\sum_{i=1}^k \a_{p_i} +\sum_{i=1}^\ell \a_{q_i}\right) +(g-2)\a_t\end{gather*}
and
\begin{gather*} c_\pi \left( g\left(\a_s+\sum_{i=1}^u \a_{p_i} +\sum_{i=1}^v \a_{q_i}\right) + (g-1)\left(\sum_{i=u+1}^k \a_{p_i}+\sum_{i=v+1}^\ell \a_{q_i} +\a_t\right)\right) \\
\qquad{} =(g-2)\a_s + (g-1)\left(\sum_{i=1}^k \a_{p_i} +\sum_{i=1}^\ell \a_{q_i}+\a_t\right).
\end{gather*}

As $c_\pi^{-1} \a <_D \a$ and $c_\pi^{-1}\a \in \{\a_\pi(\g)\,|\,\g\in \G_{\pi,nd}\}$ or $c_\pi\a<_D\a$ and $c_\pi\a \in \{\a_\pi(\g)\,|\,\g\in \G_{\pi,nd}\}$, by Lemma~\ref{lem_cox_transf}, the root $\a$ is $\{\a_\pi(\g)\,|\,\g\in \G_{\pi,nd}\}$.
\end{proof}

\begin{Prop}Let $Q$ be an affine quiver of type $A$ with a unique source and a unique sink. Then $\bigcup_{\pi\in P_Q}\{\a_\pi(\g)\,|\,\g\in \G_{\pi,nd}\}$ coincides with the set of real Schur roots.
\end{Prop}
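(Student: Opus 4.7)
The plan is to assemble the two preceding propositions, which together handle the forward inclusion cleanly, and then supply a short additional argument for the reverse inclusion. For the forward inclusion
\[\{\text{real Schur roots of } Q\} \subseteq \bigcup_{\pi\in P_Q}\{\a_\pi(\g)\,|\,\g\in \G_{\pi,nd}\},\]
I would take an arbitrary real Schur root $\a$ of $Q$. By Proposition~\ref{prop_c_vector_from_triangulation}, $\a$ has one of the two explicit forms listed there, and Proposition~\ref{prop_oriened_path} then exhibits, for each such form, a permutation $\pi \in P_Q$ and a curve $\g \in \G_{\pi,nd}$ with $\a_\pi(\g) = \a$. This gives the forward inclusion immediately.

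For the reverse inclusion, I would begin by observing that, by the very definition of the associated root, $\a_\pi(\g)$ is a positive root of $\D_Q$ obtained by applying a sequence of simple reflections to a simple root; hence it lies in the Weyl orbit of a simple root, and so is a positive \emph{real} root of the affine root system, never an imaginary multiple of the minimal imaginary root $\d = \a_s + \sum_i \a_{p_i} + \sum_i \a_{q_i} + \a_t$. The task thus reduces to verifying that every positive real root of $Q$ is in fact a real Schur root.

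To complete this reduction, I would rewrite the two families of Proposition~\ref{prop_c_vector_from_triangulation} as $(g-1)\d + \b$ (Form~1) and $g\d - \b'$ (Form~2), where $\b$ runs over positive roots of the finite-type full subquiver supported on an initial segment of $Q$ containing $s$, and $\b'$ runs over positive roots supported on a terminal segment containing $t$; and then match this parametrization against the standard description of positive real roots of $\tilde{A}_{k+\ell+1}$ as $\b_0 + n\d$ ($n \geq 0$) or $n\d - \b_0$ ($n \geq 1$), with $\b_0$ a positive root of the underlying finite $A_{k+\ell+1}$. The main obstacle I expect is exactly this combinatorial matching: confirming that the triples $(g,u,v)$ in Proposition~\ref{prop_c_vector_from_triangulation} exhaust every positive real root for this orientation. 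Once this is in hand, $\a_\pi(\g)$ is forced to be a real Schur root, and combining with the forward inclusion yields the claimed equality.
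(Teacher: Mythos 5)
Your forward inclusion is exactly the paper's argument: Proposition~\ref{prop_c_vector_from_triangulation} lists the possible forms of the positive $c$-vectors (which coincide with the real Schur roots by N\'ajera Ch\'avez's theorem), and Proposition~\ref{prop_oriened_path} realizes each root of those forms as $\a_\pi(\g)$ for a non-decreasing non-self-crossing curve. The paper's own proof consists of precisely this combination and nothing more.

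The reverse inclusion is where your proposal breaks down. You reduce it to the claim that every positive real root of $Q$ is a real Schur root, and you expect the parametrization of Proposition~\ref{prop_c_vector_from_triangulation} to match the full set of positive real roots $\b_0+n\d$ and $n\d-\b_0$. That matching provably fails: rewriting the two families as $(g-1)\d+\b$ and $g\d-\b$, the finite-type root $\b$ ranges only over positive roots whose support contains the source $s$ (equivalently, $\d-\b$ contains $t$), not over all positive roots of $A_{k+\ell+1}$. Concretely, for $k\geq 1$ the root $\a_{p_1}+\d$ is a positive real root, but its $\a_s$-coefficient is $1$ while its $\a_{p_1}$-coefficient is $2$; Form~1 then forces $g=1$ and hence all coefficients at most $1$, while Form~2 forces $g=2$ and hence $\a_t$-coefficient $2$, so $\a_{p_1}+\d$ fits neither family and is not a real Schur root. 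Your proposed reduction is therefore to a false statement. What the reverse inclusion actually requires is that the associated root of every \emph{non-decreasing} non-self-crossing curve lands in the two listed families, i.e., that no non-decreasing curve produces a root such as $\a_{p_1}+\d$. That is a statement about curves rather than about the root system, and neither your proposal nor the paper's one-line proof supplies an argument for it; it is essentially the hard direction of Conjecture~\ref{conjllnondec} in this affine case.
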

\begin{proof}By Propositions~\ref{prop_c_vector_from_triangulation} and~\ref{prop_oriened_path}, this proposition holds.
\end{proof}

\subsection*{Acknowledgments}
The author would like to thank Kyungyong Lee for guidance and helpful discussions, Son Nyguen for helpful suggestions, and the referees for numerous helpful comments.

\pdfbookmark[1]{References}{ref}
\LastPageEnding

\end{document}